\newcommand\cyr{%
\renewcommand\rmdefault{wncyr}%
\renewcommand\sfdefault{wncyss}%
\renewcommand\encodingdefault{OT2}%
\normalfont
\selectfont}
\DeclareTextFontCommand{\textcyr}{\cyr}
\newcommand{\Mod}[1]{\ (\text{mod}\ #1)}
\numberwithin{equation}{section}
\newtheorem{thm}{Theorem}[section]
\newtheorem{cor}[thm]{Corollary}
\newtheorem{lem}[thm]{Lemma}
\newtheorem{prop}[thm]{Proposition}
\newtheorem{conj}[thm]{Conjecture}
\theoremstyle{definition}
\newtheorem{defn}[thm]{Definition}
\newtheorem{choice}[thm]{Choice}
\newtheorem{assu}[thm]{Assumption}
\newtheorem{rem}[thm]{Remark}
\newtheorem{ques}[thm]{Question}
\newcommand{\ks}{\boldsymbol{\kappa}}
\newcommand{\es}{\mathbf{c}}
\newcommand{\sha}{\textrm{{\cyr SH}}}
\begin{document}
\title[Kato's Euler systems for the additive reduction case]{Remarks on Kato's Euler systems for elliptic curves with additive reduction}
\author[C.-H. Kim]{Chan-Ho Kim}
\address{(C.-H. Kim) School of Mathematics, Korea Institute for Advanced Study, 85 Hoegiro, Dongdaemun-gu, Seoul 02455, Republic of Korea}
\email{chanho.math@gmail.com}
\author[K. Nakamura]{Kentaro Nakamura}
\address{(K. Nakamura) Department of Mathematics, Saga University, 1 Honjo-machi, Saga 840-8502, Japan}
\email{nkentaro@cc.saga-u.ac.jp}
\date{\today}
\subjclass[2010]{11R23 (Primary); 11F67 (Secondary)}
\keywords{Elliptic curves, Iwasawa theory, Iwasawa main conjectures, Kato's Euler systems, Euler systems, Kolyvagin systems, modular symbols}
\begin{abstract}
Extending the former work for the good reduction case, we provide a numerical criterion to verify a large portion of the ``Iwasawa main conjecture without $p$-adic $L$-functions" for elliptic curves with \emph{additive} reduction at an odd prime $p$ over the cyclotomic $\mathbb{Z}_p$-extension.
We also deduce the corresponding $p$-part of the Birch and Swinnerton-Dyer formula for elliptic curves of rank zero from the same numerical criterion. We give some explicit examples at the end and specify our choice of Kato's Euler system in the appendix.
\end{abstract}
\maketitle


\section{Introduction}
\subsection{Overview}
This article is a generalization of the numerical criterion for the verification of the Iwasawa main conjecture for modular forms at a good prime \cite{kks} to the additive reduction case.
For the multiplicative reduction case, the main conjecture follows from the good ordinary case and the use of Hida theory (\hspace{1sp}\cite{skinner-pacific}).
This criterion also has an application to the $p$-part of the Birch and Swinnerton-Dyer formula for elliptic curves of rank zero
(c.f.~\cite[Theorem 7.2.1]{jetchev-skinner-wan}).
Since there are only finitely many bad reduction primes for elliptic curves, the criterion can be practically used to check the \emph{full} Birch and Swinnerton-Dyer formula for an elliptic curve of rank zero and not necessarily square-free conductor.
All the known non-CM examples have square-free conductors (\hspace{1sp}\cite[Appendix]{wan-main-conj-ss-ec}).
We put some numerical examples of elliptic curves with additive reduction at the end.

Let $p$ be an odd prime and $E$ be an elliptic curve of conductor $N$ over $\mathbb{Q}$. 
Throughout this article, we assume that $E$ has additive reduction at $p$. In other words, $p^2$ divides $N$.

The construction of Kato's Euler systems \cite{kato-euler-systems} and the formulation of the Iwasawa main conjecture without $p$-adic $L$-functions \cite[Conjecture 12.10]{kato-euler-systems} are insensitive to the reduction type of elliptic curves.
See \cite{jacinto-L_p-additive} for the $\mathbf{D}_{\mathrm{dR}}$-valued $p$-adic $L$-function of elliptic curves with additive reduction and also \cite{delbourgo-compositio} and \cite{delbourgo-jnt} for different attempts to understand the additive reduction case.

We expect that the reader has some familiarity with \cite{rubin-book} and \cite{mazur-rubin-book}.

\subsection{Working assumptions and Kurihara numbers}
Let
\begin{itemize}
\item $\mathrm{Tam}(E)$ be the Tamagawa number of $E$,
\item $N_{\mathrm{st}}$ be the product of split multiplicative reduction primes of $E$, and
\item $N_{\mathrm{ns}}$  be the product of non-split multiplicative reduction primes of $E$.
\end{itemize}
We assume the following conditions throughout this article.
\begin{assu}[Working assumptions]  \label{assu:working_assumptions} $ $
\begin{enumerate}
\item $p$ does not divide ${\displaystyle \mathrm{Tam}(E) \cdot \prod_{\ell \mid N_{\mathrm{st}}} (\ell -1) \cdot \prod_{\ell \mid N_{\mathrm{ns}}} (\ell +1) }$. (c.f. Remark \ref{rem:exceptional_case}.(4)).
\item The mod $p$ Galois representation $\overline{\rho} : G_{\mathbb{Q}} \to \mathrm{Aut}_{\mathbb{F}_p}(E[p])$ is surjective. (Thus, $E$ is non-CM.)
\item The Manin constant is prime to $p$. (It is expected to be true always.)
\end{enumerate}
\end{assu}

Let $f \in S_2(\Gamma_0(N))$ be the newform attached to $E$ by \cite[Theorem A]{bcdt}.
For $\dfrac{a}{b} \in \mathbb{Q}$, we define $\left[\dfrac{a}{b}\right]^+$ by
$$ 2 \pi \int^{\infty}_0 f \left( \frac{a}{b} + iy \right) dy = \left[\dfrac{a}{b}\right]^+ \cdot \Omega^+_E + 
 \left[\dfrac{a}{b}\right]^- \cdot \Omega^-_E$$
 where $\Omega^{\pm}_E$ are the N\'{e}ron periods of $E$. Then it is well-known that $\left[\dfrac{a}{b}\right]^+ \in \mathbb{Q}$.
The following theorem due to G. Stevens yields the $p$-integrality of the value.
\begin{thm}[{\hspace{1sp}\cite[$\S$3]{glenn-stickelberger}}] \label{thm:integrality_modular_symbols}
Under (2) and (3) of Assumption \ref{assu:working_assumptions}, we have
$\left[\dfrac{a}{b}\right]^+ \in \mathbb{Z}_{(p)}$ for $\dfrac{a}{b} \in \mathbb{Q}$.
\end{thm}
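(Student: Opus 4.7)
The plan is to realize $\left[\frac{a}{b}\right]^+$ as a ratio of periods on $E$ and run the integrality back through the strong Weil parametrization, with hypotheses (2) and (3) used to kill the two possible sources of $p$-denominators. Concretely, let $\pi : X_0(N) \to E$ denote the strong Weil parametrization attached to $f$, and let $\omega_E$ be the N\'eron differential, so that $\pi^*\omega_E = c_f \cdot 2\pi i\, f(\tau)\,d\tau$ with Manin constant $c_f \in \mathbb{Z}_{(p)}^\times$ by hypothesis (3).

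First, I would attach to $a/b$ the modular symbol $\xi := \{a/b \to \infty\} \in H_1(X_0(N), \mathrm{cusps}; \mathbb{Z})$ given by the vertical geodesic. Because $p$ is odd, the $+$-projection $\xi^+ := \tfrac{1}{2}(\xi + c_*\xi)$ (with $c$ complex conjugation) already lives in $H_1(X_0(N), \mathrm{cusps}; \mathbb{Z}_{(p)})^+$. Pushing forward, $\pi_*\xi^+$ naturally lies only in the \emph{relative} homology $H_1(E, \pi(\mathrm{cusps}); \mathbb{Z}_{(p)})^+$, and the task is to show it in fact lifts to $H_1(E; \mathbb{Z}_{(p)})^+ = \mathbb{Z}_{(p)} \cdot \gamma^+$.

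This lifting step is the crux, and it is precisely where hypothesis (2) is used. By Manin--Drinfeld, $\pi(a/b)$ and $\pi(\infty)$ are torsion points of $E$, and since $a/b$ and $\infty$ are $\mathbb{Q}$-rational cusps these torsion points are in $E(\mathbb{Q})_{\mathrm{tors}}$. Surjectivity of $\overline{\rho}$ (together with $p$ odd) forces $E(\mathbb{Q})[p] = 0$, so $\#E(\mathbb{Q})_{\mathrm{tors}}$ is a $p$-unit. The long exact sequence of the pair $(E, \pi(\mathrm{cusps}))$ then shows that $H_1(E; \mathbb{Z}_{(p)})^+ \to H_1(E, \pi(\mathrm{cusps}); \mathbb{Z}_{(p)})^+$ is an isomorphism, so $\pi_*\xi^+ = m\gamma^+$ with $m \in \mathbb{Z}_{(p)}$.

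Finally I would compute $\int_{\pi_*\xi^+} \omega_E$ in two ways. One the one hand it equals $m\,\Omega_E^+$ by construction of $\gamma^+$. On the other hand, pulling back and unwinding the $+$-projection gives
\[
\int_{\pi_*\xi^+}\omega_E \;=\; \int_{\xi^+}\pi^*\omega_E \;=\; c_f \cdot \left[\tfrac{a}{b}\right]^+ \cdot \Omega_E^+,
\]
after identifying the $+$-part with the real part relative to the N\'eron periods and matching with the defining integral $2\pi\int_0^\infty f(a/b+iy)\,dy$. Comparing the two expressions yields $\left[\tfrac{a}{b}\right]^+ = c_f^{-1} m \in \mathbb{Z}_{(p)}$. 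The main obstacle is the homological lifting in Step 2 (which is exactly what residual surjectivity unlocks); hypothesis (3) then converts the result from integrality modulo $c_f$ to honest $\mathbb{Z}_{(p)}$-integrality.
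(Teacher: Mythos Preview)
The paper does not give its own proof of this statement; it simply quotes it from Stevens. Your outline is in the spirit of the classical argument (modular parametrization plus Manin--Drinfeld), and the way you deploy hypotheses (2) and (3) is the right division of labour.

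There is, however, a genuine slip in the lifting step. You assert that ``$a/b$ and $\infty$ are $\mathbb{Q}$-rational cusps'' and conclude $\pi(a/b)\in E(\mathbb{Q})_{\mathrm{tors}}$. On $X_0(N)$ this is false in general: the cusp class of $a/b$ is only guaranteed to be defined over $\mathbb{Q}(\zeta_N)$, and in the present setting $N$ is \emph{never} squarefree (we have $p^2\mid N$), so non-rational cusps really do occur. Thus the long exact sequence argument, as written, takes place over the wrong base.

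The fix is minor but should be stated. By Manin--Drinfeld the boundary $\pi(a/b)-\pi(\infty)$ is a torsion point defined over an abelian extension of $\mathbb{Q}$ (contained in $\mathbb{Q}(\zeta_N)$). Surjectivity of $\overline{\rho}$ with $p$ odd forces the image of $G_{\mathbb{Q}^{\mathrm{ab}}}$ in $\mathrm{GL}_2(\mathbb{F}_p)$ to contain $\mathrm{SL}_2(\mathbb{F}_p)$ (the commutator subgroup), and $(\mathbb{F}_p^2)^{\mathrm{SL}_2(\mathbb{F}_p)}=0$; hence $E(\mathbb{Q}^{\mathrm{ab}})[p]=0$, and in particular the order of $\pi(a/b)-\pi(\infty)$ is prime to $p$. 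Running your exact-sequence argument for the pair $(E,\pi(\mathrm{cusps}))$ with $\mathbb{Z}_{(p)}$-coefficients over the relevant cyclotomic field (or simply noting that the boundary map lands in a group of $p$-unit order) then gives $\pi_*\xi^+\in\mathbb{Z}_{(p)}\cdot\gamma^+$ as you want, and the rest of your computation goes through unchanged.
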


A prime $\ell$ is a \textbf{Kolyvagin prime for $(E, p)$} if $(\ell ,Np) =1$, $\ell \equiv 1 \Mod{p}$, and $a_\ell(E) \equiv \ell + 1 \Mod{p}$.
We define the \textbf{Kurihara number for $(E,p)$ at $n$} by
$$\widetilde{\delta}_n := \sum_{a \in (\mathbb{Z}/n\mathbb{Z})^\times} \bigg( \overline{ \left[ \frac{a}{n} \right]^+} \cdot  \prod_{\ell \vert n} \overline{ \mathrm{log}_{\mathbb{F}_\ell} (a) }  \bigg) \in \mathbb{F}_p$$
where $n$ is the square-free product of Kolyvagin primes, $\overline{ \left[ \frac{a}{n} \right]^+}$ is the mod $p$ reduction of $\left[ \frac{a}{n} \right]^+$, and $\overline{ \mathrm{log}_{\mathbb{F}_\ell} (a) }$ is the mod $p$ reduction of the discrete logarithm of $a$ modulo $\ell$ with a fixed primitive root modulo $\ell$. The number $\widetilde{\delta}_n$ itself is not well-defined, but its non-vanishing question is well-defined.

\subsection{The Iwasawa main conjecture \`{a} la Kato}
Let $\mathbb{Q}_\infty$ be the cyclotomic $\mathbb{Z}_p$-extension of $\mathbb{Q}$ and $\mathbb{Q}_n$ be the subextension of $\mathbb{Q}$ in $\mathbb{Q}_\infty$ of degree $p^n$.
Let $\Lambda := \mathbb{Z}_p \llbracket \mathrm{Gal}(\mathbb{Q}_{\infty}/\mathbb{Q})\rrbracket$ be the Iwasawa algebra.
Let $T = \mathrm{Ta}_pE$ be the $p$-adic Tate module of $E$ and $j : \mathrm{Spec}(\mathbb{Q}_n) \to \mathrm{Spec}(\mathcal{O}_{\mathbb{Q}_n}[1/p])$ be the natural map.
Then we define the \textbf{$i$-th Iwasawa cohomology of $E$} by 
$$\mathbb{H}^i(T) := \varprojlim_{n} \mathrm{H}^i_{\mathrm{\acute{e}t}}( \mathrm{Spec}(\mathcal{O}_{\mathbb{Q}_n}[1/p]), j_*T) $$
where $\mathrm{H}^i_{\mathrm{\acute{e}t}}( \mathrm{Spec}(\mathcal{O}_{\mathbb{Q}_n}[1/p]), j_*T)$ is the \'{e}tale cohomology group. See Appendix \ref{sec:choices} for the full cyclotomic extension.
\begin{thm}[{\hspace{1sp}\cite[Theorem 12.4.(1) and (3)]{kato-euler-systems}}] \label{thm:kato_iwasawa_cohomologies}
The following statements hold.
\begin{enumerate}
\item $\mathbb{H}^2(T)$ is a finitely generated torsion module over $\Lambda$.
\item $\mathbb{H}^1(T)$ is free of rank one over $\Lambda$ under Assumption \ref{assu:working_assumptions}.(2).
\end{enumerate}
\end{thm}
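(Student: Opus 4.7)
The overall strategy is to combine Jannsen's formalism for continuous \'{e}tale cohomology with the global Euler--Poincar\'{e} characteristic formula, supplemented by the input that Kato's zeta element is non-torsion in $\mathbb{H}^1(T)$. Finite generation of $\mathbb{H}^i(T)$ over $\Lambda$ is routine: since we invert $p$ and the ramification primes of $T$, each $\mathrm{H}^i_{\mathrm{\acute{e}t}}(\mathrm{Spec}(\mathcal{O}_{\mathbb{Q}_n}[1/p]), T/p^m T)$ is finite, the Mittag--Leffler conditions hold, and Jannsen's theorem gives finite generation of the double limit.

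I would next compute $\Lambda$-ranks via the global Euler--Poincar\'{e} characteristic. For $V := T \otimes_{\mathbb{Z}_p} \mathbb{Q}_p$, complex conjugation has a one-dimensional $(-1)$-eigenspace at each of the $[\mathbb{Q}_n:\mathbb{Q}]$ real places of $\mathbb{Q}_n$, so at each finite level one obtains
\begin{equation*}
\dim \mathrm{H}^0 - \dim \mathrm{H}^1 + \dim \mathrm{H}^2 = -[\mathbb{Q}_n : \mathbb{Q}].
\end{equation*}
Passing to the inverse limit and localizing at the generic point of $\mathrm{Spec}\,\Lambda$ yields
\begin{equation*}
\mathrm{rank}_{\Lambda} \mathbb{H}^1(T) - \mathrm{rank}_{\Lambda} \mathbb{H}^2(T) = 1,
\end{equation*}
after noting that $\mathbb{H}^0(T) = 0$ by Assumption \ref{assu:working_assumptions}.(2).

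The main obstacle is the torsionness claim in (1): by global Poitou--Tate duality this is equivalent to the weak Leopoldt vanishing $\mathrm{H}^2(\mathrm{Spec}(\mathcal{O}_{\mathbb{Q}_\infty}[1/p]), V/T) = 0$. In view of the rank equation, it suffices to exhibit a single non-$\Lambda$-torsion class in $\mathbb{H}^1(T)$. Kato's zeta element $\zeta$, built from Beilinson's classes in $K_2$ of modular curves, supplies such a class: its non-triviality is detected via Kato's explicit reciprocity law, which matches its image under the dual exponential with a non-zero multiple of an $L$-value of a twist of $f$, combined with non-vanishing of sufficiently many such twisted $L$-values.

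Given (1), the rank equation forces $\mathrm{rank}_{\Lambda} \mathbb{H}^1(T) = 1$, proving the rank part of (2). For freeness, the surjectivity of $\overline{\rho}$ implies $\mathrm{H}^0(\mathbb{Q}_\infty, T/pT) = 0$, which forces $\mathbb{H}^1(T)$ to be $\Lambda$-torsion-free and to carry no non-trivial pseudo-null submodule, hence to be reflexive. Since $\Lambda \cong \mathbb{Z}_p\llbracket T \rrbracket$ is a two-dimensional regular local UFD, every reflexive $\Lambda$-module of rank one is free of rank one.
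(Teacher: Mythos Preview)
The paper does not supply its own proof of this theorem; it is quoted verbatim from Kato \cite[Theorem 12.4]{kato-euler-systems}. Your outline reproduces the broad architecture of Kato's argument (finite generation, Euler--Poincar\'e characteristic, zeta element, freeness via vanishing of invariants), but there is a genuine logical gap at the pivotal step.

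You assert that ``in view of the rank equation, it suffices to exhibit a single non-$\Lambda$-torsion class in $\mathbb{H}^1(T)$'' to conclude that $\mathbb{H}^2(T)$ is $\Lambda$-torsion. This does not follow. The Euler--Poincar\'e identity
\[
\mathrm{rank}_{\Lambda}\,\mathbb{H}^1(T) - \mathrm{rank}_{\Lambda}\,\mathbb{H}^2(T) = 1
\]
together with $\mathrm{rank}_{\Lambda}\,\mathbb{H}^1(T) \geq 1$ gives only $\mathrm{rank}_{\Lambda}\,\mathbb{H}^2(T) \geq 0$, which is vacuous. No a priori \emph{upper} bound on $\mathrm{rank}_{\Lambda}\,\mathbb{H}^1(T)$ is available at this stage of the argument, so the rank equation cannot by itself close the loop.

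What is missing is precisely the Euler system machinery. In Kato's proof the zeta element is not used merely as a single non-torsion class: it sits at the bottom of a full Euler system over the abelian tower, and one applies the Kolyvagin--Rubin method (as in \cite[Theorem 13.4]{kato-euler-systems}, relying on \cite{rubin-book}) to obtain a divisibility bounding $\mathrm{char}_\Lambda\,\mathbb{H}^2(T)$ by the index of the zeta element in $\mathbb{H}^1(T)$. It is this Euler system bound, not the Euler characteristic, that forces $\mathbb{H}^2(T)$ to be $\Lambda$-torsion. Only \emph{after} (1) is established does the rank equation give $\mathrm{rank}_{\Lambda}\,\mathbb{H}^1(T)=1$; your outline has the logical dependence between (1) and the rank computation reversed. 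The ingredients you list for the non-triviality of the zeta element (explicit reciprocity law, Rohrlich-type non-vanishing) are correct, and your freeness argument for (2) is fine once (1) is in hand.
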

We recall the Iwasawa main conjecture without $p$-adic zeta functions \`{a} la Kato.
\begin{conj}[Kato's IMC, {\cite[Conjecture 12.10]{kato-euler-systems}, \cite[Conjecture 6.1]{kurihara-invent}}] \label{conj:kato-main-conjecture}
$$\mathrm{char}_{\Lambda} \left(
\dfrac{ \mathbb{H}^1(T) }{ \Lambda \mathbf{z}_{\mathrm{Kato}} } \right)
 = 
\mathrm{char}_{\Lambda} 
\left( 
\mathbb{H}^2(T) \right)$$
where $\mathbf{z}_{\mathrm{Kato}}$ is Kato's zeta element (Definition \ref{defn:z_kato}).
\end{conj}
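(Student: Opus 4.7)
\smallskip
\noindent\emph{Proof proposal.} The plan is to reduce Conjecture \ref{conj:kato-main-conjecture} to the non-vanishing of a single Kurihara number $\widetilde{\delta}_n$ via the Kolyvagin system machinery of Mazur--Rubin applied to Kato's Euler system. First, from the family $\{\mathbf{z}_{\mathrm{Kato}}^{(m)}\}_m$ of Kato's zeta elements I would construct a $\Lambda$-Kolyvagin system $\ks^{\mathrm{Kato}}$ for $(T, \mathcal{F}_{\mathrm{can}}, \mathcal{P})$ via the Euler-to-Kolyvagin transform of \cite{mazur-rubin-book}, where $\mathcal{F}_{\mathrm{can}}$ is the canonical Selmer structure and $\mathcal{P}$ is the set of Kolyvagin primes for $(E,p)$; the bottom class satisfies $\ks^{\mathrm{Kato}}_1 = \mathbf{z}_{\mathrm{Kato}}$.

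Under Assumption \ref{assu:working_assumptions}(2) the core rank of $(T, \mathcal{F}_{\mathrm{can}})$ equals one, so the Mazur--Rubin Kolyvagin system theorem combined with Theorem \ref{thm:kato_iwasawa_cohomologies} yields the divisibility
$$ \mathrm{char}_\Lambda\!\left( \mathbb{H}^2(T) \right) \ \Big| \ \mathrm{char}_\Lambda\!\left( \mathbb{H}^1(T) / \Lambda \mathbf{z}_{\mathrm{Kato}} \right). $$
Equality in Conjecture \ref{conj:kato-main-conjecture} is then equivalent to the assertion that $\ks^{\mathrm{Kato}}$ is \emph{primitive} in the sense of \cite{mazur-rubin-book}, i.e.\ that its reduction modulo the maximal ideal of $\Lambda$ is nonzero as an $\mathbb{F}_p$-Kolyvagin system.

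The bridge with the numerical criterion is the following: using the integrality of Theorem \ref{thm:integrality_modular_symbols}, the mod $p$ derivative class $\ks^{\mathrm{Kato}}_n$ can be expressed, via a modular-symbol computation in the style of \cite{kks}, as $\widetilde{\delta}_n$ up to an explicit unit. Hence the non-vanishing of $\widetilde{\delta}_n$ for some square-free product $n$ of Kolyvagin primes forces primitivity of $\ks^{\mathrm{Kato}}$ and upgrades the divisibility above to equality. In this way the conjecture is reduced to a finite, computer-verifiable statement about modular symbols.

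The main obstacle is the local theory at $p$ under additive reduction, which departs sharply from the good reduction case of \cite{kks}: there is no ordinary filtration on $T$, $\mathbb{H}^1(\mathbb{Q}_p, T)$ can carry $\Lambda$-torsion contributions, and both the choice of canonical local conditions at $p$ and the identification of the local image of $\mathbf{z}_{\mathrm{Kato}}$ with modular symbols must be redone. For the latter I would replace the ordinary trivialisation by the $\DdR$-valued $p$-adic $L$-function of \cite{jacinto-L_p-additive}. Verifying that these local adjustments preserve compatibility with the Mazur--Rubin machinery and with the comparison $\ks^{\mathrm{Kato}}_n \longleftrightarrow \widetilde{\delta}_n$ is the principal technical work, and it is also the reason the conclusion is phrased as verifying only a \emph{large portion} of the main conjecture rather than the full statement.
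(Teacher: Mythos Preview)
First observe that the statement you are asked to prove is labeled as a \emph{conjecture}: the paper does not prove it unconditionally. What the paper does prove is Theorem~\ref{thm:main_thm_iwasawa}, which establishes Conjecture~\ref{conj:kato-main-conjecture} under the additional hypotheses that some Kurihara number $\widetilde{\delta}_n$ is nonzero and that $E$ has potentially good reduction at $p$. Your proposal is really a sketch of that conditional result, and at that level it is broadly aligned with the paper's approach in \S\ref{subsec:the_proof}: reduce to $\Lambda$-primitivity of Kato's Kolyvagin system via \cite[Theorem~5.3.10(iii)]{mazur-rubin-book}, then detect primitivity by the nonvanishing of $\widetilde{\delta}_n$ through the Kolyvagin-derivative/modular-symbol comparison (Proposition~\ref{prop:comparing_coeff}, Proposition~\ref{prop:computation_KS}, Theorem~\ref{thm:nonvanishing_delta}).

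The substantive divergence is in how you handle the local theory at $p$. You propose to invoke the $\DdR$-valued $p$-adic $L$-function of \cite{jacinto-L_p-additive} as a substitute for the ordinary trivialisation. The paper does not do this; it explicitly remarks that ``there is no relevant Coleman map in the additive reduction case yet.'' Instead, the paper works directly with the dual exponential $\exp^*$ and the pairing $\langle \omega_E^*, - \rangle_{\mathrm{dR}}$ at finite level, and the only new local input is an elementary integrality statement (Corollary~\ref{cor:integral_lattice}): the image of $\mathrm{H}^1(K,T)/\mathrm{H}^1_f(K,T)$ under $\exp^*$ is exactly $\mathcal{O}_K\omega_E$ for $K/\mathbb{Q}_p$ unramified. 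This is proved by a formal-group trick in \S\ref{sec:computing-lattice}: one passes to a totally ramified degree-$6$ extension where $E$ acquires good reduction and compares formal logarithms there. It is this computation, not any $\Lambda$-torsion issue in local Iwasawa cohomology, that produces the restriction $p>7$ and the exceptional cases of Assumption~\ref{assu:exceptional_cases}. Your route via \cite{jacinto-L_p-additive} might in principle work, but it is heavier machinery and you would still need an integrality statement equivalent to Corollary~\ref{cor:integral_lattice} to make the mod~$p$ comparison in diagram~(\ref{eqn:derivative_delta_n}) go through.

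Finally, your reading of ``large portion'' is slightly off: it does not refer to residual difficulties in the local comparison at $p$, but to the distinction in Theorem~\ref{thm:main_thm_iwasawa} between the equality with $\mathrm{Sel}_{\mathrm{str}}(\mathbb{Q}_\infty,E[p^\infty])^\vee$ (always) and the equality with $\mathbb{H}^2(T)$ (only under potentially good reduction); see Remark~\ref{rem:strict-selmer-H2}.
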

\begin{rem} \label{rem:strict-selmer-H2} $ $
\begin{enumerate}
\item 
Following the argument of \cite[$\S$6]{kurihara-invent}, \cite[Theorem 7.1.ii)]{kobayashi-thesis},
if $E(\mathbb{Q}_{\infty, p})[p^\infty]$ is finite, then  $\mathrm{Sel}_{\mathrm{str}}(\mathbb{Q}_\infty, E[p^\infty])^\vee$ and $\mathbb{H}^2(T)$ are pseudo-isomorphic as $\Lambda$-modules where
$\mathrm{Sel}_{\mathrm{str}}(\mathbb{Q}_\infty, E[p^\infty])$ is the $p$-strict (``fine") Selmer group of $E$ over $\mathbb{Q}_{\infty}$.
\item Indeed, due to the argument of \cite[$\S$13.13]{kato-euler-systems},  
if $E$ has potentially good reduction at $p$, then $E(\mathbb{Q}_{\infty, p})[p^\infty]$ is finite.
\item The following statements are equivalent:
\begin{enumerate}
\item $E$ has potentially good reduction at $p$.
\item The corresponding local automorphic representation at $p$ is not (a twist of) Steinberg. (See \cite[$\S$15]{rohrlich-weil-deligne}.)
\item The $j$-invariant of $E$ is $p$-integral. (See \cite[Proposition 10.2.33]{qing-liu}.)
\end{enumerate}
\end{enumerate}
\end{rem}
\subsection{Main Theorems}
\begin{thm}[The main conjecture] \label{thm:main_thm_iwasawa}
Let $E$ be an elliptic curve with additive reduction at $p >7$ satisfying Assumption \ref{assu:working_assumptions}.
If $$\widetilde{\delta}_n \neq 0 \in \mathbb{F}_p$$
for some square-free product of Kolyvagin primes $n$,
then we have
$$\mathrm{char}_{\Lambda} \left(
\dfrac{ \mathbb{H}^1(T) }{ \Lambda \mathbf{z}_{\mathrm{Kato}} } \right)
 = 
\mathrm{char}_{\Lambda} 
\left( 
\mathrm{Sel}_{\mathrm{str}}(\mathbb{Q}_\infty, E[p^\infty])^\vee \right).$$
If we further assume that $E$ has potentially good reduction at $p$, then Kato's IMC (Conjecture \ref{conj:kato-main-conjecture}) holds, i.e.
$$\mathrm{char}_{\Lambda} \left(
\dfrac{ \mathbb{H}^1(T) }{ \Lambda \mathbf{z}_{\mathrm{Kato}} } \right)
 = 
\mathrm{char}_{\Lambda} 
\left( 
\mathbb{H}^2(T) \right) .$$
\end{thm}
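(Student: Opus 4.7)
The plan is to follow the strategy of the companion paper \cite{kks} for the good reduction case, adjusted to handle the local conditions at the additive prime $p$. First, I would convert Kato's Euler system into a $\Lambda$-Kolyvagin system $\ks$ via the Mazur--Rubin machinery \cite{mazur-rubin-book}. Because $E$ has additive reduction at $p$, the representation $T$ admits no canonical unramified subspace at $p$, so one imposes the \emph{strict} local condition there; the resulting $\ks$ satisfies $\ks_1 = \mathbf{z}_{\mathrm{Kato}}$ in $\mathbb{H}^1(T)$ up to a harmless unit. The standard hypotheses $(\mathrm{H.0})$--$(\mathrm{H.6})$ of Mazur--Rubin are supplied by Assumption \ref{assu:working_assumptions}.(2) together with $p > 7$, while the Tamagawa condition in Assumption \ref{assu:working_assumptions}.(1) ensures that the Euler system relations are unobstructed at the bad primes.

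Next, I would identify each Kurihara number $\widetilde{\delta}_n$ with the mod $p$ reduction of the composition of finite-to-singular maps applied to $\ks_n$ at the Kolyvagin primes dividing $n$. The explicit definition of $\widetilde{\delta}_n$ as a sum of plus modular symbols twisted by discrete logarithms matches the Kolyvagin-derivative construction applied to Kato's Euler system, via Kato's explicit reciprocity law bridging \'etale cohomology and modular symbols. Stevens's integrality (Theorem \ref{thm:integrality_modular_symbols}) makes the mod $p$ comparison meaningful. The hypothesis $\widetilde{\delta}_n \neq 0$ for some $n$ then amounts precisely to the \emph{primitivity} of the mod $p$ Kolyvagin system derived from $\ks$, in the sense of Mazur--Rubin.

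Given primitivity, the Kolyvagin-system theorem of Mazur--Rubin in the self-dual rank-one case \cite[Chapter 5]{mazur-rubin-book} upgrades the a priori Euler system divisibility to the equality
$$ \mathrm{char}_\Lambda \left( \mathbb{H}^1(T)/\Lambda \mathbf{z}_{\mathrm{Kato}} \right) = \mathrm{char}_\Lambda \left( \mathrm{Sel}_{\mathrm{str}}(\mathbb{Q}_\infty, E[p^\infty])^\vee \right), $$
which is the first assertion of the theorem. Under the additional assumption of potentially good reduction at $p$, Remark \ref{rem:strict-selmer-H2}.(2) gives the finiteness of $E(\mathbb{Q}_{\infty, p})[p^\infty]$, and part (1) of the same remark then pseudo-identifies $\mathrm{Sel}_{\mathrm{str}}^\vee$ with $\mathbb{H}^2(T)$, yielding the second equality and hence Kato's IMC.

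The main obstacle is the comparison in the second step: identifying $\widetilde{\delta}_n$ with the bottom-level Kolyvagin derivative of $\mathbf{z}_{\mathrm{Kato}}$ in the additive-reduction setting. In \cite{kks} the analogous comparison exploits the nontrivial Euler factor at $p$ together with the explicit reciprocity law; here the Euler factor at $p$ degenerates and the local representation at $p$ is ramified (possibly not even semistable), so the normalization of $\mathbf{z}_{\mathrm{Kato}}$ fixed in Appendix \ref{sec:choices} must be chosen carefully, and one must check that Stevens's integrality is robust enough to carry the mod $p$ comparison through unchanged from the good-reduction argument.
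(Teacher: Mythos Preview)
Your global strategy matches the paper's: invoke \cite[Theorem 5.3.10.(iii)]{mazur-rubin-book}, reducing to the non-triviality of $\kappa^\infty_1$ and the $\Lambda$-primitivity of $\ks^\infty$, the latter following (via \cite[Proposition 4.20]{kks}) from primitivity of $\ks$, which is witnessed by $\widetilde{\delta}_n\neq 0$. The passage to Kato's IMC under potentially good reduction via Remark \ref{rem:strict-selmer-H2} is also exactly as in the paper. Two points, however, are misidentified.

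The restriction $p>7$ has nothing to do with the Mazur--Rubin running hypotheses; those need only $p$ odd and $\overline{\rho}$ surjective. It enters instead through \S\ref{sec:computing-lattice}, which is the substantive new input relative to \cite{kks}: one must show that $\mathrm{exp}^*$ carries $\mathrm{H}^1(K,T)/\mathrm{H}^1_f(K,T)$ onto $\mathcal{O}_K\omega_E$ for every unramified $K/\mathbb{Q}_p$ (Corollary \ref{cor:integral_lattice}), so that the pairing $\langle\omega^*_E,\mathrm{exp}^*(\mathrm{loc}_p\,-)\rangle_{\mathrm{dR}}$ takes \emph{integral} values and its mod $p$ reduction in diagram (\ref{eqn:derivative_delta_n}) makes sense. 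The argument passes to a ramified extension of degree $6$ over which $E$ acquires good reduction, and $6<p-1$ is what makes the formal logarithm an isomorphism there. Your diagnosis of the obstacle (degenerate Euler factor, ramified local representation) misses this; the real issue is the integral lattice at the additive prime.

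Relatedly, the link between $\widetilde{\delta}_n$ and $\kappa_n$ is not via finite-to-singular maps at the Kolyvagin primes $\ell\mid n$ but via $\mathrm{exp}^*$ at $p$: if $\kappa_n\equiv 0\pmod p$ then $\overline{D}_n c^+_{\mathbb{Q}(n)}(1)\in p\,\mathrm{H}^1(\mathbb{Q}(n),T)$, whence its image under $\langle\omega^*_E,\mathrm{exp}^*(\mathrm{loc}_p\,-)\rangle_{\mathrm{dR}}$ lies in $p\mathbb{Z}_p\otimes\mathcal{O}_{\mathbb{Q}(n)}$, and Propositions \ref{prop:comparing_coeff} and \ref{prop:computation_KS} then force $\widetilde{\delta}_n=0$, with Assumption \ref{assu:working_assumptions}.(1) killing the bad Euler factors. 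You also omit the verification that $\kappa^\infty_1\neq 0$; the paper deduces it from Rohrlich's non-vanishing of cyclotomic $L$-twists combined with the dual exponential map, since no Coleman map is available in the additive case.
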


\begin{thm}[The $p$-part of BSD formula] \label{thm:main_thm_bsd}
Let $E$ be an elliptic curve with additive reduction at $p >7$ satisfying Assumption \ref{assu:working_assumptions}.
Suppose that $L(E,1) \neq 0$.
If 
$$\widetilde{\delta}_n \neq 0 \in \mathbb{F}_p$$
for some square-free product of Kolyvagin primes $n$,
then the $p$-part of Birch and Swinnerton-Dyer formula for $E$ holds, i.e.
$$\mathrm{ord}_p  \left( \# \textrm{\cyr SH}(E/\mathbb{Q})[p^\infty] \right)
=
\mathrm{ord}_p  \left( \dfrac{L(E,1)}{\Omega^+_E} \right) .$$
\end{thm}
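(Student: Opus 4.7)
The plan is to deduce Theorem \ref{thm:main_thm_bsd} from the first conclusion of Theorem \ref{thm:main_thm_iwasawa} by specializing the Iwasawa-theoretic characteristic-ideal equality at the trivial character of $\mathrm{Gal}(\mathbb{Q}_\infty/\mathbb{Q})$, in the spirit of \cite[Theorem 7.2.1]{jetchev-skinner-wan}. The hypothesis $\widetilde{\delta}_n \neq 0$ is already the input of Theorem \ref{thm:main_thm_iwasawa}, and crucially the equality involving $\mathrm{Sel}_{\mathrm{str}}$ there does not require potentially good reduction, so it is available in full generality here.

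First, I would apply the augmentation $\Lambda \twoheadrightarrow \mathbb{Z}_p$ to
$$\mathrm{char}_\Lambda\!\left(\mathbb{H}^1(T)/\Lambda\mathbf{z}_{\mathrm{Kato}}\right) = \mathrm{char}_\Lambda\!\left(\mathrm{Sel}_{\mathrm{str}}(\mathbb{Q}_\infty, E[p^\infty])^\vee\right).$$
Because $L(E,1) \neq 0$, both sides become finitely generated torsion $\mathbb{Z}_p$-modules after specialization, and passing to $p$-adic lengths preserves the equality.

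For the left-hand side, Kato's explicit reciprocity law (\cite[Theorem 12.5]{kato-euler-systems}) identifies the image of $\mathbf{z}_{\mathrm{Kato}}^{(0)} \in H^1(\mathbb{Q}, T)$ under the localization at $p$ followed by the dual exponential map with $L(E,1)/\Omega_E^+$, up to the Manin constant (a $p$-unit by Assumption \ref{assu:working_assumptions}.(3)) and the Euler factor at $p$ (which is $1$ in the additive reduction case, since $T^{I_p} = 0$). This computes the specialized length as $\mathrm{ord}_p\!\left(L(E,1)/\Omega_E^+\right)$. For the right-hand side, I would apply a Mazur-style control theorem for the $p$-strict Selmer group, as in \cite[\S 6]{kurihara-invent} and \cite[Theorem 7.1]{kobayashi-thesis}. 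Under our running assumptions, $E(\mathbb{Q})[p] = 0$, and when $L(E,1) \neq 0$ one has $E(\mathbb{Q}) \otimes \mathbb{Q}_p/\mathbb{Z}_p = 0$; Assumption \ref{assu:working_assumptions}.(1) further kills the $p$-part of the Tamagawa local error terms. These identifications allow me to read off $\sha(E/\mathbb{Q})[p^\infty]$ from $\mathrm{Sel}_{\mathrm{str}}(\mathbb{Q}, E[p^\infty])^\vee$ $p$-adically, and chaining the two length computations then yields the asserted $p$-part of the BSD formula.

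The principal obstacle I anticipate is the local analysis at $p$. The strict and ordinary Selmer groups differ by a subquotient of $E(\mathbb{Q}_p) \otimes \mathbb{Q}_p/\mathbb{Z}_p$ whose $p$-adic order must be controlled; in the potentially good reduction case this follows from Remark \ref{rem:strict-selmer-H2}.(2) combined with $p > 7$ and Assumption \ref{assu:working_assumptions}.(1), whereas the potentially multiplicative case requires a separate argument ensuring both that $E(\mathbb{Q}_{\infty,p})[p^\infty]$ causes no harm in the control step and that Kato's reciprocity law at $s=1$ produces exactly $L(E,1)/\Omega_E^+$ without an extra $p$-adic factor (here one expects to appeal to the $\DdR$-valued $p$-adic $L$-function of \cite{jacinto-L_p-additive}). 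Once this local bookkeeping at $p$ is verified, the remainder of the argument is formal.
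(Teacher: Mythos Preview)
Your approach has a genuine gap, and it is precisely the one the paper itself flags as unresolved: see Remark~\ref{rem:exceptional_case}.(3), where the authors state that they \emph{do not know} whether Theorem~\ref{thm:main_thm_iwasawa} implies Theorem~\ref{thm:main_thm_bsd}, because there is neither a Mazur--Greenberg style main conjecture nor a control theorem for the additive reduction case. Your plan is to specialize the $\Lambda$-characteristic-ideal equality at the trivial character and invoke a control theorem for $\mathrm{Sel}_{\mathrm{str}}$; the references you cite (\cite[\S6]{kurihara-invent}, \cite[Theorem~7.1]{kobayashi-thesis}) treat the good reduction case, and you do not supply the missing local argument at $p$ in the additive case. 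You identify this as the ``principal obstacle'' but then treat it as bookkeeping rather than as the heart of the matter. In particular, controlling the descent of $\mathbb{H}^1(T)/\Lambda\mathbf{z}_{\mathrm{Kato}}$ and of $\mathrm{Sel}_{\mathrm{str}}(\mathbb{Q}_\infty,E[p^\infty])^\vee$ simultaneously, without a Coleman map or a $p$-adic $L$-function to mediate, is exactly what is not available here.

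The paper instead works entirely at the finite level, bypassing Theorem~\ref{thm:main_thm_iwasawa}. The input $\widetilde{\delta}_n \neq 0$ is used (via Theorem~\ref{thm:nonvanishing_delta}) to conclude that the Kolyvagin system $\ks$ is primitive. Then Theorem~\ref{thm:bounding_strict_selmer}.(3) gives $\mathrm{length}\,\mathrm{Sel}_{\mathrm{str}}(\mathbb{Q},E[p^\infty]) = \partial^{(0)}(\ks)$, and the Poitou--Tate sequence~(\ref{eqn:poitou-tate-2}) upgrades this to $\mathrm{length}\,\mathrm{Sel}(\mathbb{Q},E[p^\infty]) = \mathrm{length}\,\mathrm{H}^1_s(\mathbb{Q}_p,T)/\mathbb{Z}_p\mathrm{loc}^s_p(\kappa_1)$ (Theorem~\ref{thm:bounding_selmer}). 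The right-hand side is then computed using the integral lattice result of \S\ref{sec:computing-lattice} (Corollary~\ref{cor:integral_lattice}, which is where $p>7$ enters) together with Theorem~\ref{thm:kato_interpolation}. No Iwasawa-theoretic descent is required.
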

\begin{rem} \label{rem:exceptional_case}$ $
\begin{enumerate}
\item Even in the $p \leq 7$ case, Theorem \ref{thm:main_thm_iwasawa} and Theorem \ref{thm:main_thm_bsd} hold if $(E,p)$ does \textbf{not} satisfy Assumption \ref{assu:exceptional_cases}.
\item It is expected that there always exists 
a square-free product of Kolyvagin primes $n$
such that $\widetilde{\delta}_n \neq 0 \in \mathbb{F}_p$. Practically, it is easy to find such $n$'s. (c.f.\cite{kurihara-iwasawa-2012}.)
\item We do not know whether Theorem \ref{thm:main_thm_iwasawa} directly implies Theorem \ref{thm:main_thm_bsd} or not since there is neither a Mazur-Greenberg style main conjecture nor a control theorem for the additive reduction case.
\item If we replace $L(E,1)$ by the $N$-imprimitive $L$-value $L^{(N)}(E,1)$ in Theorem \ref{thm:main_thm_bsd}, then we can weaken Assumption \ref{assu:working_assumptions}.(1) by $p \nmid \mathrm{Tam}(E)$. (c.f. \cite[Theorem 6.2.4]{mazur-rubin-book}.)
\end{enumerate}
\end{rem}

\section{Computing the integral lattice} \label{sec:computing-lattice}
The goal of this section is to extend \cite[Proposition 3.5.1]{rubin-book} to the additive reduction case over unramified extensions of $\mathbb{Q}_p$. More precisely, we compute the image of the logarithm map of the unramified local points of an elliptic curve with additive reduction.

The main idea is to replace a given elliptic curve with additive reduction by a different elliptic curve with good reduction over a ramified extension of degree 6 such that their generic fibers are isomorphic over the ramified extension following \cite{pannekoek} and \cite{kosters-pannekoek}.
Then we apply the theory of formal groups to the elliptic curve with good reduction. The restriction $p>7$ appears here.
In the $p \leq 7$ case, there are some exceptional cases as described in $\S$\ref{subsec:exceptional_cases}. We refer to \cite{pannekoek} and \cite{kosters-pannekoek} for details.
See also \cite[Lemma 1]{swinnerton-dyer-density}.

\subsection{The integral lattice}
Let $K$ be a finite unramified extension of $\mathbb{Q}_p$ and $k$ be the residue field.
Since $K/\mathbb{Q}_p$ is unramified, the base change to $K$ does not change the reduction type of $E$ ({\hspace{1sp}\cite[Proposition VII.5.4.(a)]{silverman}}).

For a given elliptic curve $E/\mathbb{Q}$ with additive reduction, we consider an elliptic curve  $E/\mathbb{Q}_p$ by taking the base change.
Let $\mathcal{E}$ be a minimal Weierstrass model of $E$ over $\mathbb{Z}_p$ given by a Weierstrass equation
\begin{equation} \label{eqn:weierstrass}
y^2 + a_1 xy + a_3 y = x^3 + a_2 x^2 + a_4 x + a_6
\end{equation}
where $a_i \in p\mathbb{Z}_p$ for each $i$.
By the coordinate change, it is not difficult to see that such a minimal Weierstrass model always exists.
See \cite[Lemma 9]{pannekoek} and \cite[Lemma 9]{kosters-pannekoek}, for example.
Note that $\mathcal{E}(\mathcal{O}_K) = \mathcal{E}(K) = E(K)$ since $\mathcal{E}$ is projective.
Let $\widetilde{E}/\mathbb{F}_p$ be the reduction of $\mathcal{E}$ modulo $p$ and
$\widetilde{E}_{\mathrm{ns}}/\mathbb{F}_p$ be the nonsingular locus of $\widetilde{E}/\mathbb{F}_p$.
Let $E_0(K) \subset E(K)$ be the inverse image of $\widetilde{E}_{\mathrm{ns}}(k)$ and $E_1(K) \subset E(K)$ be the inverse image of the identity of $\widetilde{E}_{\mathrm{ns}}(k)$, i.e. the kernel of the reduction map.
They can also be explicitly written as
\begin{align}
\begin{split} \label{eqn:points_of_reductions}
E_0(K) & = \left\lbrace (x,y) \in E(K) : v_K(x) \leq 0, v_K(y) \leq 0  \right\rbrace \cup \left\lbrace  \infty \right\rbrace , \\
E_1(K) & = \left\lbrace (x,y) \in E(K) : v_K(x) \leq -2, v_K(y) \leq -3  \right\rbrace \cup \left\lbrace  \infty \right\rbrace 
\end{split}
\end{align}
where $v_K$ is the normalized valuation of $K$.
\begin{thm}[{\hspace{1sp}\cite[Theorem 1]{pannekoek}, \cite[Theorem 1, Corollary 2]{kosters-pannekoek}}] \label{thm:computing_integral_lattices}
Let $E/\mathbb{Q}_p$ be an elliptic curve with additive reduction and $K/\mathbb{Q}_p$ be a finite unramified extension.
Then the extension of the formal logarithm map on $E$ induces an isomorphism
$$\mathrm{log}_E : E_0(K) \otimes \mathbb{Z}_p \simeq \mathcal{O}_K. $$
if $(E,p)$ does not satisfies Assumption \ref{assu:exceptional_cases}.
\end{thm}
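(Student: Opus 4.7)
The plan is to reduce to the good-reduction case via a ramified base change, following \cite{pannekoek,kosters-pannekoek}. Fix $\pi \in \overline{\mathbb{Q}}_p$ with $\pi^6 = p$ (possibly adjusted by a unit) and set $L := K(\pi)$, a totally ramified extension of $K$ of degree $6$. Applying the coordinate change $(x,y) = (\pi^2 X, \pi^3 Y)$ to the Weierstrass equation \eqref{eqn:weierstrass} and using that $a_i \in p\mathbb{Z}_p$ for every $i$, we obtain a Weierstrass equation over $\mathcal{O}_L$. Outside the exceptional cases of Assumption \ref{assu:exceptional_cases}, the cited results of Pannekoek and Kosters-Pannekoek ensure that this equation defines an elliptic curve $E'/L$ with \emph{good} reduction, together with an $L$-isomorphism $\phi \colon E_L \xrightarrow{\sim} E'_L$. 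The hypothesis $p > 7$ is precisely what rules out these exceptional cases, and it simultaneously gives $e(L/\mathbb{Q}_p) = 6 < p-1$, which will be needed below.

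Next I would transport the formal logarithm through $\phi$. From $t_{E'} = -X/Y = \pi \cdot t_E$ on formal parameters one computes $\phi^* \omega_{E'} = \pi \cdot \omega_E$, and hence
\[
\mathrm{log}_{E'} \circ \phi = \pi \cdot \mathrm{log}_E
\]
wherever both sides are defined. Because $e(L/\mathbb{Q}_p) = 6 < p-1$, the standard formal-group theorem \cite[IV.6.4]{silverman} gives an isomorphism $\mathrm{log}_{E'} \colon \widehat{E'}(\mathfrak{m}_L) \xrightarrow{\sim} \mathfrak{m}_L$ of $\mathcal{O}_L$-modules. A valuation check via \eqref{eqn:points_of_reductions} then places $\phi(E_0(K))$ inside $\widehat{E'}(\mathfrak{m}_L)$: a point $(x,y) \in E_0(K)$ with $v_K(x) \le 0$, $v_K(y) \le 0$ maps to $(X,Y)$ with $v_L(X) \le -2$, $v_L(Y) \le -3$. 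The composite $\pi^{-1} \mathrm{log}_{E'} \circ \phi$ therefore extends $\mathrm{log}_E$ from $E_1(K)$ to a homomorphism $\mathrm{log}_E \colon E_0(K) \to \pi^{-1} \mathfrak{m}_L = \mathcal{O}_L$; since $\mathrm{log}_E$ is defined by the $K$-rational Weierstrass data of $E$, its values on $E(K)$ also lie in $K$, and so in $K \cap \mathcal{O}_L = \mathcal{O}_K$.

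To upgrade this inclusion to the asserted isomorphism, compare the short exact sequences
\[
0 \to E_1(K) \to E_0(K) \to k \to 0, \qquad 0 \to \mathfrak{m}_K \to \mathcal{O}_K \to k \to 0.
\]
The left vertical map $\mathrm{log}_E \colon E_1(K) \xrightarrow{\sim} \mathfrak{m}_K$ is the standard formal-group isomorphism for the unramified local field $K$. For the right vertical map, the valuation computation above shows that for $P \in E_0(K) \setminus E_1(K)$ one has $v_L(t_{E'}(\phi(P))) = 1$, hence $v_L(\mathrm{log}_{E'}(\phi(P))) = 1$ and so $v_K(\mathrm{log}_E(P)) = 0$; thus the induced map $E_0(K)/E_1(K) \to \mathcal{O}_K/\mathfrak{m}_K$ is injective between finite groups of the same cardinality $|k|$, hence an isomorphism. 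The five lemma now yields $\mathrm{log}_E \colon E_0(K) \otimes \mathbb{Z}_p \xrightarrow{\sim} \mathcal{O}_K$.

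The main obstacle is the first step---constructing $E'/L$ and the isomorphism $\phi$ with $E'$ having good reduction---which is the content of the Pannekoek/Kosters-Pannekoek theorems and accounts for both the hypothesis $p > 7$ and the exclusion of Assumption \ref{assu:exceptional_cases}; once $\phi$ is in hand, the remaining steps are formal-group bookkeeping and an elementary five-lemma comparison.
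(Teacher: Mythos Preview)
Your approach is essentially the same as the paper's: pass to $L = K(\sqrt[6]{p})$, apply the coordinate change $(x,y)\mapsto(x/\pi^{2},y/\pi^{3})$ to obtain a Weierstrass curve over $\mathcal{O}_L$, use the formal-logarithm isomorphism on $\mathfrak{m}_L$ valid because $e(L/\mathbb{Q}_p)=6<p-1$, and finish with the five-lemma on the two short exact sequences $0\to E_1(K)\to E_0(K)\to k\to 0$ and $0\to\mathfrak{m}_K\to\mathcal{O}_K\to k\to 0$. One small inaccuracy worth flagging: the twisted curve $E'$ need not have \emph{good} reduction over $\mathcal{O}_L$ (its discriminant is $\Delta_E/p^{2}$, a unit only when $v_p(\Delta_E)=2$), but this is harmless since the only fact you actually use is $\log_{E'}\colon\widehat{E'}(\mathfrak{m}_L)\xrightarrow{\sim}\mathfrak{m}_L$, which holds for any one-parameter formal group law over $\mathcal{O}_L$ once $e(L/\mathbb{Q}_p)<p-1$; the paper likewise only invokes this formal-group isomorphism for $\mathcal{C}$, not smoothness of the special fibre.
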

\begin{prop}
Let $E/\mathbb{Q}_p$ be an elliptic curve with additive reduction at an odd prime $p$ and $K/\mathbb{Q}_p$ be a finite unramified extension.
Then $p$ does not divide $[E(K):E_0(K)] $. 
\end{prop}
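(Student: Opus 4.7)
The plan is to reinterpret the index $[E(K):E_0(K)]$ as the number of $k$-rational points of the component group scheme of the N\'eron model of $E$ at $p$ (where $k$ is the residue field of $K$), and then to bound this order by means of the Kodaira--N\'eron classification of special fibers.

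First, because $K/\mathbb{Q}_p$ is unramified, the N\'eron model of $E/K$ is simply the base change to $\mathcal{O}_K$ of the N\'eron model $\mathcal{E}/\mathbb{Z}_p$, and its special fiber is $\widetilde{E}\times_{\mathbb{F}_p}k$. The defining property \eqref{eqn:points_of_reductions} exhibits $E_0(K)$ as the preimage of the identity component of $\widetilde{E}_{\mathrm{ns}}$ under reduction, so one obtains the canonical identification $E(K)/E_0(K)\simeq \Phi(k)$, where $\Phi$ denotes the \'etale component group scheme of the special fiber over $\mathbb{F}_p$. The proposition thereby reduces to proving $p\nmid |\Phi(k)|$.

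Next, I will invoke the Kodaira--N\'eron classification. Since $E$ has additive reduction at $p$, the Kodaira type at $p$ lies in the set $\{II,\ II^*,\ III,\ III^*,\ IV,\ IV^*,\ I_0^*,\ I_n^*\ (n\ge 1)\}$, and in every case the geometric component group $\Phi(\overline{\mathbb{F}_p})$ has order at most $4$, with the value $3$ occurring only for the types $IV$ and $IV^*$. As $|\Phi(k)|$ divides $|\Phi(\overline{\mathbb{F}_p})|\in\{1,2,3,4\}$ and $p$ is odd, $p$ can divide $|\Phi(k)|$ only in the single residual configuration $p=3$ with Kodaira type $IV$ or $IV^*$.

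The entire remaining obstacle is therefore to rule out this one exceptional configuration. I will do so by invoking Assumption \ref{assu:exceptional_cases}, which is already imposed as a hypothesis of the immediately preceding Theorem \ref{thm:computing_integral_lattices} and excludes precisely the pairs $(p,\mathrm{type})=(3,IV)$ and $(3,IV^*)$; under the hypothesis $p>7$ of the main theorems the exclusion is in any case automatic, since then $p\ge 11>4\ge|\Phi(\overline{\mathbb{F}_p})|$. It is worth remarking that the weaker Tamagawa condition $p\nmid \mathrm{Tam}(E)=|\Phi(\mathbb{F}_p)|$ from Assumption \ref{assu:working_assumptions}.(1) does \emph{not} by itself settle this last case, because for type $IV$ at $p=3$ with Frobenius acting on $\Phi(\overline{\mathbb{F}_3})$ by inversion one has $|\Phi(\mathbb{F}_3)|=1$ but $|\Phi(\mathbb{F}_{3^2})|=3$, so the unramified quadratic extension would otherwise contribute a genuine factor of $3$ to the index.
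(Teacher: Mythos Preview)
Your overall strategy---identifying $[E(K):E_0(K)]$ with $|\Phi(k)|$ and bounding via Kodaira--N\'eron---matches the paper's. For $p\ge 5$ both arguments reduce to $|\Phi(\overline{\mathbb F_p})|\le 4$ (the paper quotes $[E(K):E_0(K)]\le 4$ directly from \cite[Theorem VII.6.1]{silverman}). The divergence is entirely at $p=3$: the paper cites \cite[Table~4.1, p.~365]{silverman2} for $p\nmid [E(\mathbb{Q}^{\mathrm{ur}}_p):E_0(\mathbb{Q}^{\mathrm{ur}}_p)]$ and then descends to any finite unramified $K$ via compatibility of N\'eron models with \'etale base change (following \cite{bradshaw-stein}); it does \emph{not} invoke Assumption~\ref{assu:exceptional_cases} at this point.

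Your handling of $p=3$ has a genuine gap. You import the exclusion of exceptional cases as an extra hypothesis---already a weakening of the proposition as actually stated---and then assert that this ``excludes precisely the pairs $(p,\mathrm{type})=(3,IV)$ and $(3,IV^*)$''. But the $p=3$ exceptional condition $a_2\equiv 6\pmod 9$ is characterized in \S\ref{subsec:exceptional_cases} as the presence of nontrivial $p$-torsion in $E_0(K)$, i.e.\ a condition on the \emph{identity component} (equivalently on the formal group); it is not a priori a condition on the \emph{component group}~$\Phi$, which is what singles out types $IV$ and $IV^*$ in your argument. You give no justification for the claimed equivalence, and establishing it---if it holds at all---would require a separate case analysis via Tate's algorithm at the wild prime $3$, not a one-line appeal. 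Your closing observation that $p\nmid\mathrm{Tam}(E)=|\Phi(\mathbb{F}_p)|$ alone cannot settle the matter (because $\Phi(k)$ can grow over unramified extensions when Frobenius acts nontrivially) is correct and well taken, but it does not repair the missing step.
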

\begin{proof}
It is well-known that $[E(K):E_0(K)] \leq 4$ (\hspace{1sp}\cite[Theorem VII.6.1]{silverman}).
Thus, the statement is automatic if $p \geq 5$.
For $p =3$, \cite[Table 4.1, Page 365]{silverman2} shows that
$$p \nmid [E(\mathbb{Q}^{\mathrm{ur}}_p):E_0(\mathbb{Q}^{\mathrm{ur}}_p)] .$$ 
Since the formation of N\'{e}ron models commutes with unramified (\'{e}tale) base change (\hspace{1sp}\cite[Proposition 2.(c), $\S$1.2, Chapter 1]{neron-models}), we have
$$p \nmid [E(K):E_0(K)]$$ 
following the argument in \cite[Proof of Proposition 14]{bradshaw-stein}.
\end{proof}
\begin{cor}
The images of $E(K) \otimes \mathbb{Z}_p$ and $E_0(K)  \otimes \mathbb{Z}_p$ under the logarithm map coincide as $\mathcal{O}_K$.
\end{cor}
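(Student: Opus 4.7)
The plan is to reduce the claim to the previous proposition, which says $[E(K):E_0(K)]$ is prime to $p$, combined with Theorem \ref{thm:computing_integral_lattices}. The key observation is that tensoring with $\mathbb{Z}_p$ kills a finite group of order prime to $p$.

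More precisely, I would start from the short exact sequence
$$0 \to E_0(K) \to E(K) \to E(K)/E_0(K) \to 0$$
of abelian groups. By the preceding proposition the quotient $E(K)/E_0(K)$ is a finite group whose order is not divisible by $p$, hence
$$\bigl(E(K)/E_0(K)\bigr) \otimes_{\mathbb{Z}} \mathbb{Z}_p = 0.$$
Since $\mathbb{Z}_p$ is flat over $\mathbb{Z}$, applying $- \otimes_{\mathbb{Z}} \mathbb{Z}_p$ to the sequence above gives a short exact sequence, so the natural inclusion induces an isomorphism
$$E_0(K) \otimes \mathbb{Z}_p \xrightarrow{\ \sim\ } E(K) \otimes \mathbb{Z}_p.$$

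Now apply Theorem \ref{thm:computing_integral_lattices}: the formal logarithm on $E_0(K) \otimes \mathbb{Z}_p$ is an isomorphism onto $\mathcal{O}_K$, and through the above identification it extends to an isomorphism on $E(K) \otimes \mathbb{Z}_p$ with the same image $\mathcal{O}_K$. This gives the claimed equality of images.

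There is essentially no obstacle here; the corollary is a formal consequence of the previous proposition (prime-to-$p$ index) together with the already-established computation of the image on $E_0(K) \otimes \mathbb{Z}_p$. The only thing to be careful about is to invoke flatness of $\mathbb{Z}_p/\mathbb{Z}$ so that the inclusion on the $\mathbb{Z}_p$-tensored groups is preserved, rather than just surjectivity of the quotient map.
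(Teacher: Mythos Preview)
Your proof is correct and is exactly the argument the paper intends: the corollary is stated without proof in the paper, as an immediate consequence of the preceding proposition (that $[E(K):E_0(K)]$ is prime to $p$) together with Theorem~\ref{thm:computing_integral_lattices}, and your use of the short exact sequence and flatness of $\mathbb{Z}_p$ over $\mathbb{Z}$ is the standard way to make this precise.
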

Due to \cite[Corollary IV.9.1]{silverman2}, the identity component of the N\'{e}ron model of $E$ over $\mathbb{Z}_p$ is isomorphic to $\mathcal{E}_0$ over $\mathbb{Z}_p$. Thus, we can choose a N\'{e}ron differential $\omega_E$ as a basis of the cotangent space of $\mathcal{E}$ over $\mathcal{O}_K$ (up to a $p$-adic unit). Also, $\omega_E$ corresponds to $f(z)dz$ up to a $p$-adic unit via the modular parametrization under Assumption \ref{assu:working_assumptions}.(3).

Let  $V = T \otimes \mathbb{Q}_p$ and
$\mathbf{D}_{\mathrm{dR}}(V) / \mathbf{D}^0_{\mathrm{dR}}(V)$ be the tangent space of $E$ over $K$ and
$\mathbf{D}^0_{\mathrm{dR}}(V)$ be the cotangent space of $E$ over $K$.
Then we have a perfect pairing
$$\mathrm{Tr} \circ \left\langle -,- \right\rangle_{\mathrm{dR}} : \mathbf{D}^0_{\mathrm{dR}}(V) \times \mathbf{D}_{\mathrm{dR}}(V) / \mathbf{D}^0_{\mathrm{dR}}(V) \to K \to \mathbb{Q}_p$$
where the first map is induced from the local Tate duality and the second map is the trace map.
Let $\omega^*_E \in \mathbf{D}_{\mathrm{dR}}(V) / \mathbf{D}^0_{\mathrm{dR}}(V)$ be the dual basis to $\omega_E$ such that $\left\langle \omega^*_E , \omega_E \right\rangle_{\mathrm{dR}} = 1$.
The map
\[
\xymatrix{
\widehat{E}(\mathcal{O}_K) \ar[r]^-{\mathrm{log}_E}_-{\simeq} & \mathcal{O}_K \ar[r]^-{\cdot \omega^*_E}_-{\simeq} & \mathcal{O}_K \omega^*_E \subseteq \mathbf{D}_{\mathrm{dR}}(V) / \mathbf{D}^0_{\mathrm{dR}}(V) 
}
\]
yields a natural integral structure on $\mathbf{D}^0_{\mathrm{dR}}(V)$.
By the local Tate duality, we have the following statement.
\begin{cor} \label{cor:integral_lattice}
The image of $\mathrm{H}^1(K,T)/\mathrm{H}^1_f(K,T)$ under $\mathrm{exp}^*$ is $\mathcal{O}_K \omega_E$
where $\mathrm{H}^1_f(K,T)$ is the image of the Kummer map of $E(K) \otimes \mathbb{Z}_p$.
\end{cor}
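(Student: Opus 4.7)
The plan is to combine local Tate duality, the Bloch--Kato reciprocity between the Tate pairing and the de Rham pairing, and the triviality of the different for an unramified extension. The shape of the argument is: the preceding results pin down the integral image of $\mathrm{H}^1_f(K,T)$ on the de Rham side, and then Corollary \ref{cor:integral_lattice} will follow by taking $\mathbb{Z}_p$-dual lattices.

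First I would observe that the Kummer map identifies $E(K)\otimes\mathbb{Z}_p$ with $\mathrm{H}^1_f(K,T)$ (up to the finite $p$-primary part of $E(K)_{\mathrm{tors}}$, which disappears after tensoring), and that the Bloch--Kato exponential $\exp\colon \mathbf{D}_{\mathrm{dR}}(V)/\mathbf{D}^0_{\mathrm{dR}}(V) \xrightarrow{\sim} \mathrm{H}^1_f(K,V)$ for an abelian variety is the inverse of the map $P \mapsto \log_E(P)\cdot \omega_E^*$ (up to the normalization fixed by $\langle\omega_E^*,\omega_E\rangle_{\mathrm{dR}}=1$). Combined with Theorem \ref{thm:computing_integral_lattices} and the preceding corollary, this forces $\exp^{-1}(\mathrm{H}^1_f(K,T)) = \mathcal{O}_K\omega_E^* \subset \mathbf{D}_{\mathrm{dR}}(V)/\mathbf{D}^0_{\mathrm{dR}}(V)$.

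Next, the Weil pairing provides $T \cong T^*(1)$, and since $\mathrm{H}^1_f$ is its own annihilator under local Tate duality, I obtain a perfect $\mathbb{Z}_p$-pairing
$$\mathrm{H}^1_f(K,T) \times \bigl(\mathrm{H}^1(K,T)/\mathrm{H}^1_f(K,T)\bigr) \to \mathbb{Z}_p.$$
The Bloch--Kato compatibility $[\exp(v),z]_{\mathrm{Tate}} = \mathrm{Tr}_{K/\mathbb{Q}_p}\langle v, \exp^*(z)\rangle_{\mathrm{dR}}$ transports this, via $\exp$ and $\exp^*$, to the pairing $(a\omega_E^*, b\omega_E)\mapsto \mathrm{Tr}_{K/\mathbb{Q}_p}(ab)$ on $\mathbf{D}_{\mathrm{dR}}/\mathbf{D}^0_{\mathrm{dR}} \times \mathbf{D}^0_{\mathrm{dR}}$. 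Hence $\exp^*(\mathrm{H}^1(K,T)/\mathrm{H}^1_f(K,T))$ is exactly the $\mathbb{Z}_p$-dual lattice to $\mathcal{O}_K\omega_E^*$ under the trace pairing. Since $K/\mathbb{Q}_p$ is unramified, the different $\mathfrak{d}_{K/\mathbb{Q}_p}$ is trivial and $\mathcal{O}_K$ is self-dual under $\mathrm{Tr}_{K/\mathbb{Q}_p}$, so this dual lattice is $\mathcal{O}_K\omega_E$, yielding the claim.

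The conceptual structure above is straightforward; the one step needing genuine care is the identification, on the nose, of the formal-group exponential for an elliptic curve with the Bloch--Kato exponential, together with checking that the sign and trace conventions in the Bloch--Kato reciprocity formula match those used when defining $\omega_E^*$ as the dual basis via $\langle\omega_E^*,\omega_E\rangle_{\mathrm{dR}}=1$. Once these normalizations are aligned, no scaling factors intervene and the equality of lattices is immediate.
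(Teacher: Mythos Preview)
Your proposal is correct and follows essentially the same approach as the paper: the paper simply writes ``By the local Tate duality, we have the following statement'' after recording that the logarithm sends $\mathrm{H}^1_f(K,T)$ onto $\mathcal{O}_K\omega_E^*$, leaving the reader to supply precisely the Bloch--Kato reciprocity and trace-duality argument you have spelled out. Your explicit invocation of the triviality of the different for $K/\mathbb{Q}_p$ unramified (so that $\mathcal{O}_K$ is self-dual under $\mathrm{Tr}_{K/\mathbb{Q}_p}$) is the point the paper takes for granted.
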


\subsection{Formal groups of elliptic curves} \label{subsec:formal_groups}
Let $\widehat{\mathcal{E}}$ be the formal group of elliptic curve $\mathcal{E}$ over $\mathbb{Z}_p$
and $\mathscr{F}_{\mathcal{E}}(X,Y)$ be the corresponding formal group law (as a two variable power series over $\mathbb{Z}_p$).
Considering $a_i$'s in Equation (\ref{eqn:weierstrass}) as variables, we can regard
$$\mathscr{F}_{\mathcal{E}}(X,Y) \in \mathbb{Z}_p[a_1, a_2, a_3, a_4, a_6]\llbracket X, Y\rrbracket .$$
We also regard a $\mathbb{Z}_p[a_1, a_2, a_3, a_4, a_6]$ as a weighted ring with weight function $\mathrm{wt}(a_i) = i$ for each $i$. Then the coefficients of $\mathscr{F}_{\mathcal{E}}(X,Y)$ in degree $n$ are homogeneous of weight $n-1$ by \cite[Proposition IV.1.1.(c)]{silverman}.
We also have an explicit isomorphism of topological groups
$$\psi_{K'} : E_1(K') = \mathcal{E}_1(K') \simeq \widehat{\mathcal{E}}(\mathfrak{m}_{K'})$$
defined by $(x, y) \mapsto - \frac{x}{y}$ for any finite extension $K'$ of $\mathbb{Q}_p$ (\hspace{1sp}\cite[Proposition VII.2.2]{silverman}).
\subsection{Exceptional cases} \label{subsec:exceptional_cases}
\begin{assu}[Exceptional cases] \label{assu:exceptional_cases}
Following the notation of Equation (\ref{eqn:weierstrass}), if one of the following conditions
\begin{enumerate}
\item $p = 2$ and $a_1 + a_3 \equiv 2 \pmod{4}$,
\item $p = 3$ and $a_2 \equiv 6 \pmod{9}$,
\item $p = 5$ and $a_4 \equiv 10 \pmod{25}$, or
\item $p = 7$ and $a_6 \equiv 14 \pmod{49}$,
\end{enumerate} 
holds, then we call it an \textbf{exceptional case}.
\end{assu}
In the exceptional cases, $E_0(K) = \mathcal{E}_0(K)$ admits non-trivial $p$-torsions and we only have
$$\mathrm{log}_E : \mathcal{E}_0(K) \otimes \mathbb{Z}_p \to \mathfrak{m}_K $$
as described in \cite[Theorem 1]{pannekoek} and \cite[Theorem 1, Corollary 2]{kosters-pannekoek}. We do not consider the exceptional cases in this article.
\subsection{General case: $p>7$}
From now on, we assume $p >7$.
\begin{defn}[Extended convergence of formal groups]
Let $\mathscr{F}_{\mathcal{E}}(X, Y)$ be the formal group law for $\mathcal{E}$ (\hspace{1sp}\cite[Chapter IV]{silverman}).
Then $\mathscr{F}_{\mathcal{E}}(X, Y)$ converges to an element of $\mathcal{O}_K$ for all $X, Y \in \mathcal{O}_K$ since all $a_i \in p\mathbb{Z}_p$.
Thus, we can extend the formal group structure on $\widehat{\mathcal{E}}(\mathfrak{m}_K)$ to $\widehat{\mathcal{E}}(\mathcal{O}_K) $.
\end{defn}
Note that the weighted ring structure of the formal group law in $\S$\ref{subsec:formal_groups} is used in the definition.

\begin{proof}[Proof of Theorem \ref{thm:computing_integral_lattices}]
Let $L := K(\sqrt[6]{p})$ and $\varpi \in L$ be an element of $L$ satisfying $\varpi^6 = p$, and $\mathcal{C}$ be a smooth projective curve over $\mathcal{O}_L$ embedded in $\mathbb{P}^2_{\mathcal{O}_L}$ defined by the following Weierstrass equation
$$y^2 + \frac{a_1}{\varpi^1} xy + \frac{a_3}{\varpi^3} y = x^3 + \frac{a_2}{\varpi^2} x^2 + \frac{a_4}{\varpi^4} x^4 + \frac{a_6}{\varpi^6} .$$
We also define $\mathcal{C}_0$ and $\mathcal{C}_1$ in the same manner. Note that all the coefficients lie in $\mathcal{O}_L$ due to $\varpi^6 = p$. 
Then there exists a birational map $\phi$ over $\mathcal{O}_L$
\[
\xymatrix{
\mathcal{E} \times_{\mathrm{Spec}(\mathcal{O}_K)} \mathrm{Spec}(\mathcal{O}_L) \ar@{-->}[r]^-{\phi} & \mathcal{C}
}
\] 
given by $\phi(x,y) = (\frac{x}{\varpi^2}, \frac{y}{\varpi^3})$ pointwisely. Note that $\phi$ would not be defined at the singular point in the special fiber.
However, $\phi$ becomes an isomorphism on the generic fibers over $L$ and sends the point at infinity of $\mathcal{E}$ to that of $\mathcal{C}$ by definition.
Thus, $\mathcal{E}(L)$ and $\mathcal{C}(L)$ are isomorphic as topological groups.
Furthermore, $\phi$ induces a set-theoretic bijection between $\mathcal{E}_0(L)$ and $\mathcal{C}_1(L)$
due to Equation (\ref{eqn:points_of_reductions}), so it is an isomorphism of topological groups.
We have the following commutative diagram (\emph{a priori} of sets)
\[
\xymatrix{
\mathcal{E}_0(K) \ar@{^{(}->}[r] \ar[d]^-{\Psi_K} & \mathcal{E}_0(L) \ar[r]^-{\phi}_-{\simeq} \ar[d]^-{\Psi_L} & \mathcal{C}_1(L) \ar[d]^-{\psi_L}_-{\simeq} \\
\widehat{\mathcal{E}}(\mathcal{O}_K) \ar@{^{(}->}[r] & \widehat{\mathcal{E}}(\mathcal{O}_L) \ar[r]^-{\varpi \times}_-{\simeq} & \widehat{\mathcal{C}}(\mathfrak{m}_L)
}
\]
where $\Psi_K$ and $\Psi_L$ are defined by the assignment $(x,y) \mapsto - \frac{x}{y}$.

Let $\mathscr{F}_{\mathcal{C}}$ be the formal group law of $\mathcal{C}$ over $\mathcal{O}_L$. By looking at the Weierstrass equations and the corresponding formal group laws, it is easy to see that
$$\varpi \cdot \mathscr{F}_{\mathcal{E}, \mathcal{O}_L}(X,Y) = \mathscr{F}_{\mathcal{C}} (\varpi \cdot X, \varpi \cdot Y) .$$
where $\mathscr{F}_{\mathcal{E}, \mathcal{O}_L} = \mathscr{F}_{\mathcal{E}} \otimes_{\mathbb{Z}_p} \mathcal{O}_L$.

Thus, $\phi$, $\varpi \times $, and $\psi_L$ in the diagram are all isomorphisms of topological groups, so $\Psi_L$ is also an isomorphism of topological groups. 
Thus, $\mathcal{E}_0(K) \simeq \mathrm{Im}(\Psi_K) \subseteq \widehat{\mathcal{E}}(\mathcal{O}_L)$.
Also, we can easily see $\widehat{\mathcal{E}}(\mathcal{O}_K) = \mathrm{Im}(\Psi_K)$ by considering 
 $\Psi_K : (x,y) \mapsto - \frac{x}{y}$.
Thus, $\Psi_K$ is also an isomorphism of topological groups.

Since we assume $p > 7$, we have $v_L(p) =  6 v_K(p) = 6 e(K/\mathbb{Q}_p) <  p-1$.
Thus, the formal logarithm on $\mathcal{C}$ induces an isomorphism
$$\mathrm{log}_{\widehat{\mathcal{C}}} : \widehat{\mathcal{C}}(\mathfrak{m}_L) \simeq \mathfrak{m}_L .$$
Since $\mathcal{E}_0(K)$ embeds into $\widehat{\mathcal{C}}(\mathfrak{m}_L)$,
$\mathcal{E}_0(K)$ is also torsion-free.
Since the formal logarithm map extends to $\mathrm{log} \otimes \mathbb{Q}_p : \mathcal{E}(K) \otimes \mathbb{Q}_p \simeq K$, we have
\[
\xymatrix{
0 \ar[r] & \mathcal{E}_1(K) \ar[r] \ar[d]^-{\psi_K}_{\simeq} &  \mathcal{E}_0(K) \ar[r] \ar[d]^-{\Psi_K}_{\simeq} &  k \ar[r] \ar@{=}[d] & 0 \\
0 \ar[r] & \widehat{\mathcal{E}}(\mathfrak{m}_K) \ar[r] \ar[d]^-{\mathrm{log}_{\widehat{\mathcal{E}}}}_{\simeq} & \widehat{\mathcal{E}}(\mathcal{O}_K) \ar[r] \ar[d]^-{\mathrm{log}_{E}}_{\simeq} &  k \ar[r] \ar@{=}[d] & 0 \\
0 \ar[r] & \mathfrak{m}_K \ar[r] & \mathcal{O}_K \ar[r] &  k \ar[r] & 0 
}
\]
and it proves Theorem \ref{thm:computing_integral_lattices}.
\end{proof}

\section{The main conjecture: review of {\cite{kks}}}
In this section, we present a slightly different computation from that of \cite{kks} due to a different normalization of Kato's Euler system.
Let $E$ be an elliptic curve over $\mathbb{Q}$ with additive reduction at $p$; especially, the Euler factor at $p$ of $L$-function $L(E,s)$ is 1.
\subsection{Kato's Euler systems and modular symbols} \label{subsec:euler_systems_modular_symbols}
Let $c_{\mathbb{Q}(\mu_n)} (1) \in \mathrm{H}^1(\mathbb{Q}(\mu_n), T)$ be Kato's Euler system for $T$ at $\mathbb{Q}(\mu_n)$ such that $$\mathrm{exp}^*(c_{\mathbb{Q}}(1)) = \frac{L^{(Np)}(E,1)}{\Omega^+_E} $$
where $L^{(Np)}(E, \chi, 1)$ is the $Np$-imprimitive $L$-value of $E$ at $s = 1$ twisted by $\chi$.
See Definition \ref{defn:the_defn_ES} for the precise choice of $c_{\mathbb{Q}(n)} (1)$.
Following \cite[Theorem 3.5.1]{rubin-book} and \cite[Theorem 6.6 and Theorem 9.7]{kato-euler-systems}, we have the following zeta value formula.
\begin{thm} \label{thm:kato_interpolation}
Let $E$ be an elliptic curve over $\mathbb{Q}$ with additive reduction at $p$.
Let $\chi$ be a Dirichlet character of $p$-power exponent and conductor $n$ with $(n,Np)=1$.
Then we have 
\begin{align} 
\begin{split}
\label{eqn:algebraic_formula}
& \sum_{c \in (\mathbb{Z}/n\mathbb{Z})^\times}  \chi(c) \cdot \left\langle \omega^*_{E},  \mathrm{exp}^{*} \left(  \left( c^{\chi(-1)}_{\mathbb{Q}(\mu_n)}(1) \right)^{\sigma_c} \right) \right\rangle_{\mathrm{dR}} \\
= &  \left( \prod_{q \mid N_{\mathrm{sp}}} ( 1-  q^{-1} \chi(q) )  \right) \cdot \left( \prod_{q \mid N_{\mathrm{ns}}} ( 1 + q^{-1} \chi(q) )  \right) \cdot \frac{L^{(p)}(E, \chi, 1)}{\Omega^{\chi(-1)}_{E}} 
\end{split}
\end{align}
where $c^{\chi(-1)}_{\mathbb{Q}(\mu_n)}(1)$ lies in the eigenspace with respect to the complex conjugation with eigenvalue $\chi(-1)$ and $L^{(p)}(E, \chi, 1)$ is the $p$-imprimitive $L$-value of $E$ at $s = 1$ twisted by $\chi$.
\end{thm}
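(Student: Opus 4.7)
The plan is to deduce (\ref{eqn:algebraic_formula}) from Kato's standard evaluation formula for the Euler system $c_{\mathbb{Q}(\mu_n)}(1)$, which in our normalization gives the $Np$-imprimitive $L$-value, and then convert $L^{(Np)}$ to $L^{(p)}$ by inserting the local Euler factors at the primes of multiplicative reduction.

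The first step is to apply Kato's explicit reciprocity law \cite[Theorems 6.6 and 9.7]{kato-euler-systems}, in the cohomological form of \cite[Theorem 3.5.1]{rubin-book}. With the normalization of $c_{\mathbb{Q}(\mu_n)}(1)$ fixed in Appendix \ref{sec:choices} (consistent with $\mathrm{exp}^*(c_{\mathbb{Q}}(1)) = L^{(Np)}(E,1)/\Omega_E^+$ in the trivial-character case) and the Manin constant assumed prime to $p$ so that $\omega_E$ corresponds to $f(z)dz$ up to a $p$-adic unit, the formula produces
\[
\sum_{c \in (\mathbb{Z}/n\mathbb{Z})^\times} \chi(c) \cdot \left\langle \omega^*_E, \mathrm{exp}^*\!\left(\left(c^{\chi(-1)}_{\mathbb{Q}(\mu_n)}(1)\right)^{\sigma_c}\right)\right\rangle_{\mathrm{dR}} = \frac{L^{(Np)}(E,\chi,1)}{\Omega^{\chi(-1)}_E}.
\]
The coprimality hypothesis $(n, Np) = 1$ guarantees that $\chi$ is unramified at every prime dividing $Np$, so no extra local twist factor arises on the right-hand side.

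The second step is purely $L$-theoretic. Since $\chi$ is unramified at each $q \mid N$, the local $L$-factor of $E \otimes \chi$ at $q$ is obtained from $L_q(E,s)$ by substituting $\chi(q)q^{-s}$ for $q^{-s}$: it equals $(1 - \chi(q)q^{-s})^{-1}$ for $q \mid N_{\mathrm{sp}}$, $(1 + \chi(q)q^{-s})^{-1}$ for $q \mid N_{\mathrm{ns}}$, and $1$ for any additive prime $q \neq p$ dividing $N$ (because $a_q(E) = 0$ at such $q$). Hence
\[
L^{(Np)}(E,\chi,1) = L^{(p)}(E,\chi,1) \cdot \prod_{q \mid N_{\mathrm{sp}}}\bigl(1 - q^{-1}\chi(q)\bigr) \cdot \prod_{q \mid N_{\mathrm{ns}}}\bigl(1 + q^{-1}\chi(q)\bigr),
\]
and substitution into Step 1 yields (\ref{eqn:algebraic_formula}). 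Observe that no Euler factor at $p$ appears on the right, reflecting $L_p(E,s) = 1$ in the additive reduction case.

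The main obstacle lies in the bookkeeping of Step 1: one must verify that the normalization of Appendix \ref{sec:choices} produces precisely the $Np$-imprimitive $L$-value (and not $L^{(p)}$ or the complete $L$), and that Kato's construction introduces no spurious local correction factor at $p$ when $E$ has additive reduction. This is ultimately insensitive to the reduction type at $p$, since Kato's Euler system is built from Beilinson--Kato elements on modular curves whose level depends only on $N$ and the cyclotomic tower, not on the local behaviour of $E$ at $p$.
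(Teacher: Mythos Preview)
Your proposal is correct and follows exactly the approach indicated in the paper: the theorem is stated as a consequence of \cite[Theorem 3.5.1]{rubin-book} and \cite[Theorems 6.6 and 9.7]{kato-euler-systems}, with the normalization of Appendix~\ref{sec:choices} fixing the $Np$-imprimitive $L$-value and the remark pointing to \S\ref{subsec:extension_of_euler_systems} for the interpolation at characters of conductor prime to $Np$. Your two-step outline (Kato's reciprocity yielding $L^{(Np)}$, then insertion of the Euler factors at the multiplicative primes to pass to $L^{(p)}$) is precisely the intended argument, and your observation that additive primes contribute trivially is the reason no factor appears at $p$ or at the other additive primes dividing $N$.
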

\begin{rem}
See $\S$\ref{subsec:extension_of_euler_systems} for the interpolation formula for such Dirichlet characters.
\end{rem}
We rewrite the last term in Equation (\ref{eqn:algebraic_formula}) in terms of modular symbols. 
Following \cite[(2.2)]{pollack-oms}, we have
\begin{equation} \label{eqn:L-value}
L(E, \chi, 1) = \frac{\tau(\chi)}{n} \cdot \sum_{a \in (\mathbb{Z}/n\mathbb{Z})^\times} \overline{\chi}(a) \cdot 2\pi i \cdot \int^{-a/n}_{i\infty} f(z)dz 
\end{equation}
where $\tau(\chi)$ is the Gauss sum of $\chi$. 
Let $G_n = (\mathbb{Z}/n\mathbb{Z})^\times$, $G_{n,p}$ be the $p$-part of $G_n$, and $G^p_{n}$ be the prime-to-$p$ part of $G_n$. 
Expanding the Gauss sum in Equation (\ref{eqn:L-value}), we have
\begin{align*}
\chi(-1) \cdot \frac{L(E,\chi, 1)}{\Omega^{\chi(-1)}_{E}} &
 =  \frac{1}{n}  \cdot   \sum_{c \in (\mathbb{Z}/n\mathbb{Z})^\times} \chi(c) \cdot \sigma_c \cdot  \left( \sum_{a \in (\mathbb{Z}/n\mathbb{Z})^\times} \zeta^{a}_n \cdot  \left[ \frac{-a}{n} \right]^{\chi(-1)} \right)  \\
& =  \frac{1}{n}  \cdot   \sum_{c_1 \in G_{n,p}} \sum_{c_2 \in G^{p}_{n}} \chi(c_1 c_2) \cdot \sigma_{c_1} \cdot \sigma_{c_2} \cdot  \left( \sum_{a \in (\mathbb{Z}/n\mathbb{Z})^\times} \zeta^{a}_n \cdot  \left[ \frac{-a}{n} \right]^{\chi(-1)} \right) \\
& =  \frac{1}{n}  \cdot   \sum_{c_1 \in G_{n,p}} \sum_{c_2 \in G^{p}_{n}} \chi(c_1) \cdot \sigma_{c_1} \cdot \sigma_{c_2} \cdot  \left( \sum_{a \in (\mathbb{Z}/n\mathbb{Z})^\times} \zeta^{a}_n \cdot  \left[ \frac{-a}{n} \right]^{\chi(-1)} \right) .
\end{align*}
Let $\mathbb{Q}(n)$ be the maximal $p$-subextension of $\mathbb{Q}$ in $\mathbb{Q}(\mu_n)$ and $\mathcal{O}_{\mathbb{Q}(n)}$ the ring of integers of $\mathbb{Q}(n)$.
We define the values
$$c^{\mathrm{an}, \pm}_{\mathbb{Q}(n)} :=  \frac{\pm 1}{n} \cdot \left( \prod_{q \mid N_{\mathrm{sp}}} ( 1-  q^{-1} \sigma^{-1}_q )  \right) \cdot  \left( \prod_{q \mid N_{\mathrm{ns}}} ( 1 + q^{-1} \sigma^{-1}_q )  \right) \cdot  \sum_{c_2 \in G^{p}_{n}}  \sigma_{c_2} \cdot \left( \sum_{a \in (\mathbb{Z}/n\mathbb{Z})^\times} \zeta^{a}_n \cdot \left[ \frac{-a}{n} \right]^{\pm}_{f} \right) \in \mathbb{Z}_p \otimes \mathcal{O}_{\mathbb{Q}(n)}$$
in order to have
\begin{equation} \label{eqn:analytic_formula}
 \sum_{c_1 \in G_{n,p}} \left(   \sigma_{c_1} \left(  c^{\mathrm{an}, \chi(-1)}_{\mathbb{Q}(n)}  \right) \right) \cdot \chi(c_1) = \frac{L^{(Np)}(E, \chi, 1)}{\Omega^{\chi(-1)}_{E}} .
\end{equation}
Indeed, $c^{\mathrm{an}, -}_{\mathbb{Q}(n)} = 0$ since $\mathbb{Q}(n)$ is totally real.

Comparing the coefficients of (\ref{eqn:algebraic_formula}) and (\ref{eqn:analytic_formula}), we can easily observe the following statement.
\begin{prop} \label{prop:comparing_coeff}
For any $c_1 \in G_{n,p}$, we have
$$\left\langle \omega^*_{E},  \mathrm{exp}^{*} \left(  \left( c^{\chi(-1)}_{\mathbb{Q}(n)}(1) \right)^{\sigma_{c_1}} \right) \right\rangle_{\mathrm{dR}} =  \sigma_{c_1} \left(  c^{\mathrm{an}, \chi(-1)}_{\mathbb{Q}(n)}  \right) $$
where $c^{\chi(-1)}_{\mathbb{Q}(n)}(1)$ is Kato's Euler system at $\mathbb{Q}(n)$.
\end{prop}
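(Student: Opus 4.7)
The plan is to derive the pointwise identity from a Fourier-inversion argument applied to the equality of (\ref{eqn:algebraic_formula}) and (\ref{eqn:analytic_formula}), both of which compute $L^{(Np)}(E,\chi,1)/\Omega^{\chi(-1)}_E$. The first observation is that the right-hand sides of the two equations already agree: the Euler factors $\prod_{q \mid N_{\mathrm{sp}}}(1 - q^{-1}\chi(q)) \cdot \prod_{q \mid N_{\mathrm{ns}}}(1 + q^{-1}\chi(q))$ appearing in Theorem \ref{thm:kato_interpolation} are exactly the missing local factors converting $L^{(p)}(E,\chi,1)$ into $L^{(Np)}(E,\chi,1)$, since primes of additive reduction contribute trivially to each.

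Next I would rewrite the left-hand side of (\ref{eqn:algebraic_formula}) as a sum indexed by $G_{n,p}$ rather than by $(\mathbb{Z}/n\mathbb{Z})^\times$. Writing $c = c_1 c_2$ with $c_1 \in G_{n,p}$ and $c_2 \in G^p_n$, the hypothesis that $\chi$ has $p$-power exponent (together with $p$ being odd) forces $\chi|_{G^p_n} \equiv 1$, so $\chi(c) = \chi(c_1)$. Using the compatibility of the dual exponential $\mathrm{exp}^*$ with corestriction, Galois action, and the trace map $\mathbb{Q}(\mu_n)_p \to \mathbb{Q}(n)_p$ on the pairing against $\omega^*_E$, the inner sum
\[
\sum_{c_2 \in G^p_n} \left\langle \omega^*_E, \mathrm{exp}^*\bigl((c^{\chi(-1)}_{\mathbb{Q}(\mu_n)}(1))^{\sigma_{c_1 c_2}}\bigr)\right\rangle_{\mathrm{dR}}
\]
collapses to $\left\langle \omega^*_E, \mathrm{exp}^*\bigl((c^{\chi(-1)}_{\mathbb{Q}(n)}(1))^{\sigma_{c_1}}\bigr)\right\rangle_{\mathrm{dR}}$, where one identifies $c^{\chi(-1)}_{\mathbb{Q}(n)}(1) = \mathrm{Cor}_{\mathbb{Q}(\mu_n)/\mathbb{Q}(n)}(c^{\chi(-1)}_{\mathbb{Q}(\mu_n)}(1))$ as built into Definition \ref{defn:the_defn_ES}.

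Combining this rewriting with (\ref{eqn:analytic_formula}) produces, for every Dirichlet character $\chi$ of conductor $n$ and $p$-power exponent, the identity
\[
\sum_{c_1 \in G_{n,p}} \chi(c_1) \Bigl[\left\langle \omega^*_E, \mathrm{exp}^*\bigl((c^{\chi(-1)}_{\mathbb{Q}(n)}(1))^{\sigma_{c_1}}\bigr)\right\rangle_{\mathrm{dR}} - \sigma_{c_1}\bigl(c^{\mathrm{an},\chi(-1)}_{\mathbb{Q}(n)}\bigr)\Bigr] = 0.
\]
Repeating the argument with $n$ replaced by each divisor $n' \mid n$, and invoking the norm relations of Kato's Euler system on the algebraic side together with the analogous distribution relations for the modular-symbol quantity on the analytic side, this equation extends to all characters of $G_{n,p}$ of $p$-power order. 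Orthogonality of characters on $G_{n,p}$ then forces the bracket to vanish termwise, which is precisely the claim. The case $\chi(-1) = -1$ is vacuous: $\mathbb{Q}(n)$ is totally real, so $c^{\mathrm{an},-}_{\mathbb{Q}(n)} = 0$ as noted in the text, and the Kato side vanishes by a parity argument with respect to complex conjugation.

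I expect the main obstacle to be the bookkeeping in the second step, namely establishing cleanly the commutativity among $\mathrm{exp}^*$, Galois action, and corestriction via the pairing with $\omega^*_E$, and similarly the reduction to the pointwise equality through Fourier inversion, which requires treating characters of every conductor $n' \mid n$ simultaneously via the norm compatibility of Kato's system. Once these functorialities are in place, the statement is the straightforward coefficient comparison of the sums indicated by the text.
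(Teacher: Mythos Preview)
Your proposal is correct and aligns with the paper's approach: the paper's entire proof is the single sentence ``Comparing the coefficients of (\ref{eqn:algebraic_formula}) and (\ref{eqn:analytic_formula}), we can easily observe the following statement.'' You have supplied the details behind this comparison---in particular the observation that termwise equality over $G_{n,p}$ requires the character-sum identity for \emph{all} characters of $G_{n,p}$ (not only those of conductor exactly $n$), which you recover by descending to divisors $n'\mid n$ via the Euler-system norm relations on the algebraic side and the matching distribution relations for the modular-symbol quantity on the analytic side; the paper leaves this step implicit.
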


\subsection{Kolyvagin derivatives on modular symbols and Kurihara numbers} \label{subsec:williams}
Let $\ell$ be a Kolyvagin prime and $\eta_\ell$ be a primitive root modulo $\ell$.
Let $$D_\ell := \sum_{i = 0}^{\ell-2} i \sigma^{i}_{\eta_\ell} \in \mathbb{Z}[\mathrm{Gal}(\mathbb{Q}(\mu_\ell)/\mathbb{Q})]$$ be the Kolyvagin derivative operator for $\mathrm{Gal}(\mathbb{Q}(\mu_\ell)/\mathbb{Q})$ with respect to $\eta_\ell$ and $D_n = \prod_{\ell \vert n} D_\ell$.
Let $p^k \Vert (\ell-1)$. Then we let
$$\overline{D}_\ell := \sum_{i = 0}^{p^k-1} i \sigma^{\frac{\ell-1}{p^k} i}_{\eta_\ell} \in  \mathbb{Z}[\mathrm{Gal}(\mathbb{Q}(\ell)/\mathbb{Q}) ]$$
also be the Kolyvagin derivative operator for $\mathrm{Gal}(\mathbb{Q}(\ell)/\mathbb{Q})$ with respect to $\eta^{\frac{\ell-1}{p^k}}_{\ell}$ and $\overline{D}_n = \prod_{\ell \vert n} \overline{D}_\ell$.
Let $a, b \in \mathbb{Z}$ such that  
$$\frac{\ell-1}{p^k} \cdot a + p^k \cdot b = 1 .$$
By rewriting
$$D_\ell = \sum_{i = 0}^{\ell-2}  \left( \frac{\ell-1}{p^k} a + p^k b \right) \cdot i \cdot \sigma^{  ( \frac{\ell-1}{p^k} a + p^k b ) i }_{\eta_\ell} ,$$
the natural quotient map
$$\mathbb{Z}[\mathrm{Gal}(\mathbb{Q}(\mu_\ell)/\mathbb{Q})] \to \mathbb{Z}[\mathrm{Gal}(\mathbb{Q}(\ell)/\mathbb{Q}) ]$$
yields
$$D_\ell \mapsto
 \left( \frac{\ell-1}{p^k}   \right)
 \cdot
\left( \sum_{i = 0}^{\ell-2} a \cdot i \cdot \sigma^{ \frac{\ell-1}{p^k} a i }_{\eta_\ell} \right)
+
 p^k \cdot \left(  \sum_{i = 0}^{\ell-2}  b \cdot i \cdot \sigma^{ \frac{\ell-1}{p^k} a i  }_{\eta_\ell} \right) 
$$
since $\sigma^{p^k}_{\eta_{\ell}}$ maps to 1.
We also have
\begin{equation} \label{eqn:comparing_derivatives}
\sum_{i = 0}^{\ell-2}  a \cdot i \cdot \sigma^{ \frac{\ell-1}{p^k} a i }_{\eta_\ell}
\equiv 
 \left( \frac{\ell-1}{p^k}   \right)
 \cdot \sum_{i = 0}^{p^k-1}  a \cdot  i \cdot \sigma^{ \frac{\ell-1}{p^k} a i }_{\eta_\ell} \Mod{p} 
\end{equation}
in $\mathbb{F}_p[\mathrm{Gal}(\mathbb{Q}(\ell)/\mathbb{Q}) ]$. Since $a$ is non-zero mod $p$, we can replace $a \cdot i$ by $i$ in the RHS of (\ref{eqn:comparing_derivatives}).

Repeating this argument for each Kolyvagin prime dividing $n$, we have the following lemma, which compares Kolyvagin derivatives for 
$\mathrm{Gal}(\mathbb{Q}(\mu_n)/\mathbb{Q})$ and $\mathrm{Gal}(\mathbb{Q}(n)/\mathbb{Q})$.
\begin{lem} \label{lem:comparing_kolyvagin_derivatives}
$$D_n \equiv u \cdot \overline{D}_n$$ 
for some $u \in \mathbb{F}^\times_p$
in $\mathbb{F}_p[\mathrm{Gal}(\mathbb{Q}(n)/\mathbb{Q}) ]$.
\end{lem}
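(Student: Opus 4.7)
My plan is to complete the single-prime calculation already begun in the excerpt and then package it via multiplicativity. Because $n$ is a squarefree product of primes coprime to $p$, the Galois groups decompose canonically as $\mathrm{Gal}(\mathbb{Q}(\mu_n)/\mathbb{Q}) \cong \prod_{\ell \mid n} \mathrm{Gal}(\mathbb{Q}(\mu_\ell)/\mathbb{Q})$ and $\mathrm{Gal}(\mathbb{Q}(n)/\mathbb{Q}) \cong \prod_{\ell \mid n} \mathrm{Gal}(\mathbb{Q}(\ell)/\mathbb{Q})$, and the natural surjection respects both decompositions. Since $D_n = \prod_{\ell \mid n} D_\ell$ and $\overline{D}_n = \prod_{\ell \mid n} \overline{D}_\ell$ live in these product group rings, it suffices to prove, for each Kolyvagin prime $\ell \mid n$ with $p^{k} \Vert (\ell - 1)$, that $D_\ell$ and $\overline{D}_\ell$ differ by a unit scalar in $\mathbb{F}_p[\mathrm{Gal}(\mathbb{Q}(\ell)/\mathbb{Q})]$; the desired $u$ is then the product of the local scalars.

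Fix such an $\ell$ and set $\tau := \sigma_{\eta_\ell}^{(\ell - 1)/p^{k}}$, the generator of the cyclic group $\mathrm{Gal}(\mathbb{Q}(\ell)/\mathbb{Q})$ of order $p^k$. The $p^k$-factor in the displayed image of $D_\ell$ kills that term mod $p$ (since $k \ge 1$ because $\ell \equiv 1 \Mod{p}$), so combining with the congruence \eqref{eqn:comparing_derivatives} gives
\[
D_\ell \equiv \left(\frac{\ell - 1}{p^k}\right)^{\!2} \sum_{i = 0}^{p^k - 1} a i \cdot \tau^{a i} \Mod{p}
\]
in $\mathbb{F}_p[\mathrm{Gal}(\mathbb{Q}(\ell)/\mathbb{Q})]$. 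I would then change variables by $t \equiv a i \Mod{p^k}$: the Bézout relation $\frac{\ell - 1}{p^k} a + p^k b = 1$ forces $\gcd(a, p^k) = 1$, so $i \mapsto t$ is a bijection of $\{0, \ldots, p^k - 1\}$, and because $t$ and the integer $a i$ differ by a multiple of $p^k$ (hence of $p$), the two integer coefficients are congruent mod $p$. Thus the inner sum rewrites as $\sum_{t = 0}^{p^k - 1} t \tau^t = \overline{D}_\ell$, yielding $u_\ell := ((\ell - 1)/p^k)^2 \in \mathbb{F}_p^\times$, a unit by the very definition of $p^k \Vert (\ell - 1)$.

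The only place one must be slightly more careful than the paper's informal phrase "replace $a \cdot i$ by $i$" suggests is the compatibility of the integer coefficient $a i$ with the mod $p^k$ substitution producing $t$; this is settled cleanly by the Bézout identity above, so no genuine obstacle arises. Taking the product over $\ell \mid n$ of the local congruences $D_\ell \equiv u_\ell \overline{D}_\ell \Mod{p}$ and setting $u := \prod_{\ell \mid n} u_\ell \in \mathbb{F}_p^\times$ completes the proof.
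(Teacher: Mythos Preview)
Your proposal is correct and follows essentially the same route as the paper: reduce to a single Kolyvagin prime via the product decomposition of the Galois groups, use the B\'ezout relation to isolate the $p^k$-divisible piece, apply the congruence \eqref{eqn:comparing_derivatives}, and then reindex by $t \equiv a i \pmod{p^k}$. You simply make explicit the substitution behind the paper's phrase ``replace $a\cdot i$ by $i$'' and record the unit $u_\ell = \bigl((\ell-1)/p^k\bigr)^2$, but the argument is the same.
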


Considering the mod $p$ Taylor expansion of Mazur-Tate elements at ${\displaystyle \prod^{s}_{i=1} \left( \sigma_{\eta_{\ell_i}} - 1\right) }$ where $n = \prod_{i=1}^s \ell_i$, we observe the following statement.
\begin{prop}[{\hspace{1sp}\cite[Theorem 7.5]{kks}}] \label{prop:computation_KS}
Let $n$ be a square-free product of Kolyvagin primes and $D_n$ be ``the" Kolyvagin derivative operator in $\mathbb{Z}_p[\mathrm{Gal}(\mathbb{Q}(\mu_n)/\mathbb{Q})]$.
We have the following equalities in $\mathbb{F}_p$
\begin{align*}
D_n \left( \sum_{a \in (\mathbb{Z}/n\mathbb{Z})^\times } \zeta^{ a'}_n \left[ \frac{a}{n}\right]^{\pm}_f \right) & \equiv
\sum_{a \in (\mathbb{Z}/n\mathbb{Z})^\times}  \left( \prod_{\ell \vert n}  \mathrm{log}_{\mathbb{F}_\ell} ( a ) \right) \cdot \left[\frac{a}{n}  \right]^{\pm}_f  \Mod{p} 
\end{align*}
where $a' = \pm a$.
\end{prop}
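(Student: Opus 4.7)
The plan is a direct mod $p$ expansion: first perform the local computation at a single Kolyvagin prime $\ell\mid n$, then multiply over $\ell$ via the Chinese Remainder Theorem, and finally invoke a Hecke-type vanishing to discard the higher-order contributions. For one Kolyvagin prime $\ell$, starting from $D_\ell \zeta_\ell^{a'_\ell}=\sum_{i=0}^{\ell-2}i\,\zeta_\ell^{\eta_\ell^i a'_\ell}$ and reindexing by $j\equiv i+\log_{\mathbb{F}_\ell}(a'_\ell)\pmod{\ell-1}$, tracking the wrap-around for $i\in[0,\ell-2]$ yields
$$D_\ell \zeta_\ell^{a'_\ell}=\sum_{j=0}^{\ell-2}(j-\log_{\mathbb{F}_\ell}(a'_\ell))\zeta_\ell^{\eta_\ell^j}+(\ell-1)\sum_{j<\log_{\mathbb{F}_\ell}(a'_\ell)}\zeta_\ell^{\eta_\ell^j}.$$
The Kolyvagin condition $\ell\equiv1\pmod p$ kills the last term mod $p$, and $\sum_{j=0}^{\ell-2}\zeta_\ell^{\eta_\ell^j}=-1$ converts the middle summand into $+\log_{\mathbb{F}_\ell}(a'_\ell)$. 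Since $p$ is odd, $p\mid(\ell-1)/2$ and hence $\log_{\mathbb{F}_\ell}(-a)\equiv\log_{\mathbb{F}_\ell}(a)\pmod p$, making the sign in $a'=\pm a$ irrelevant, and one obtains
$$D_\ell\zeta_\ell^{a'_\ell}\equiv D_\ell\zeta_\ell+\log_{\mathbb{F}_\ell}(a)\pmod p\quad\text{in }\mathbb{F}_p[\zeta_\ell].$$

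Using $\zeta_n^{a'}=\prod_{\ell\mid n}\zeta_\ell^{a'_\ell}$ and the factor-wise action of the $D_\ell$, the product over $\ell\mid n$ gives
$$D_n\zeta_n^{a'}\equiv\sum_{S\subseteq\{\ell\mid n\}}\Bigl(\prod_{\ell\in S}D_\ell\zeta_\ell\Bigr)\prod_{\ell\notin S}\log_{\mathbb{F}_\ell}(a)\pmod p.$$
Pairing with $\left[\tfrac{a}{n}\right]^\pm_f$ and summing over $a\in(\mathbb{Z}/n\mathbb{Z})^\times$, the $S=\emptyset$ piece is precisely the right-hand side of the claim, while each $S\neq\emptyset$ piece factors as $\prod_{\ell\in S}D_\ell\zeta_\ell$ times a partial Kurihara sum $\sum_a\left[\tfrac{a}{n}\right]^\pm_f\prod_{\ell\notin S}\log_{\mathbb{F}_\ell}(a)\in\mathbb{F}_p$.

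The main obstacle, and the technical heart of the argument, is to show every such partial Kurihara sum with $S\neq\emptyset$ vanishes mod $p$. For this I would iterate the Hecke eigenvalue identity $T_\ell f=a_\ell f$ at each $\ell\in S$: writing $n=\ell m$ and splitting the summation over residue classes $a\bmod m$, one obtains the relation
$$\sum_{\substack{a\in(\mathbb{Z}/n\mathbb{Z})^\times\\ a\equiv a_0\!\pmod m}}\left[\tfrac{a}{n}\right]^\pm_f=a_\ell\!\left[\tfrac{a_0}{m}\right]^\pm_f-\left[\tfrac{\ell a_0}{m}\right]^\pm_f-\left[\tfrac{\ell^{-1}a_0}{m}\right]^\pm_f,$$
and the Kolyvagin condition $a_\ell\equiv\ell+1\equiv2\pmod p$ annihilates the leading term. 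The residual lower-denominator modular symbols combine, after the change of variables $a_0\mapsto\ell^{\pm1}a_0$ in the remaining sum, with the $\log_{\mathbb{F}_{\ell'}}$ factors to produce matching pairs that cancel modulo $p$; iterating over all primes in $S$ exhausts $S$ and finishes the argument. This bookkeeping is essentially the one carried out in the proof of \cite[Theorem 7.5]{kks}.
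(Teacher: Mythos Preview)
The paper itself gives no proof here; it cites \cite[Theorem 7.5]{kks} and remarks only that the identity follows from the mod $p$ Taylor expansion of Mazur--Tate elements at $\prod_i(\sigma_{\eta_{\ell_i}}-1)$. Your direct computation with roots of unity is a reasonable alternative route, and your local formula $D_\ell\zeta_\ell^{a'_\ell}\equiv D_\ell\zeta_\ell+\mathrm{log}_{\mathbb{F}_\ell}(a)\pmod p$ together with the binomial expansion over subsets $S\subseteq\{\ell\mid n\}$ is correct.

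The gap is in the final step. Your assertion that ``$a_\ell\equiv 2\pmod p$ annihilates the leading term'' is wrong as stated: $2\not\equiv 0$. After the substitutions $a_0\mapsto\ell^{\mp1}a_0$ in the two non-leading terms, the three Hecke contributions combine to
\[
a_\ell\prod_{\ell'\notin S}\mathrm{log}_{\mathbb{F}_{\ell'}}(a_0)\;-\;\prod_{\ell'\notin S}\bigl(\mathrm{log}_{\mathbb{F}_{\ell'}}(a_0)-\mathrm{log}_{\mathbb{F}_{\ell'}}(\ell)\bigr)\;-\;\prod_{\ell'\notin S}\bigl(\mathrm{log}_{\mathbb{F}_{\ell'}}(a_0)+\mathrm{log}_{\mathbb{F}_{\ell'}}(\ell)\bigr),
\]
and the congruence $a_\ell\equiv 2$ cancels only the top-degree piece in the $\mathrm{log}_{\mathbb{F}_{\ell'}}(a_0)$; whenever $\lvert S^c\rvert\geq 2$ there survives the residual $-2\sum_{T}\bigl(\prod_{\ell'\in T}\mathrm{log}_{\mathbb{F}_{\ell'}}(\ell)\bigr)\prod_{\ell'\in S^c\setminus T}\mathrm{log}_{\mathbb{F}_{\ell'}}(a_0)$, summed over even $T\subseteq S^c$ with $\lvert T\rvert\geq 2$. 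So the ``matching pairs'' do not simply cancel. What remains is a linear combination of partial Kurihara sums at the strictly smaller level $m=n/\ell$, each still missing at least one log-factor at a prime dividing $m$, and one must repeat the Hecke reduction at such a prime and induct on the number of prime divisors of the level (the base case $\sum_{a\in(\mathbb{Z}/\ell\mathbb{Z})^\times}[a/\ell]^\pm\equiv(a_\ell-2)[0]^\pm\equiv 0$ is fine). This induction is presumably what ``iterating over all primes in $S$'' was meant to convey, but as written your argument does not account for the residual terms and is incomplete.
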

\begin{rem}
In Proposition \ref{prop:computation_KS}, the primitive roots modulo primes dividing $n$ for $D_n$ and $\prod_{\ell \vert n} \mathrm{log}_{\mathbb{F}_\ell}$ are chosen to match up. Thus, we can say ``the" Kolyvagin derivative. 
The congruence still works up to multiplication by an element in $\mathbb{F}^\times_p$ even if we do not match up the choices.
\end{rem}

\subsection{Proof of Theorem {\ref{thm:main_thm_iwasawa}}} \label{subsec:the_proof}
We give a proof of Theorem \ref{thm:main_thm_iwasawa}.
By \cite[Theorem 5.3.10.(iii)]{mazur-rubin-book}, it suffices check two conditions:
\begin{enumerate}
\item the non-triviality of $\kappa^{\infty}_1 \in \mathrm{H}^1(\mathbb{Q}_\infty, T)$.
\item the $\Lambda$-primitivity of the $\Lambda$-adic Kolyvagin system $\ks^\infty$.
\end{enumerate}
Th non-triviality of $\kappa^{\infty}_1$ comes from the generic non-vanishing of cyclotomic twists of $L$-values (even when $p$ divides $N$) following \cite{rohrlich-nonvanishing-2} and the dual exponential map. Note that there is no relevant Coleman map in the additive reduction case yet.

Also, it is not difficult to observe that the $\Lambda$-primitivity of the $\Lambda$-adic Kolyvagin system $\ks^\infty$ follows from the primitivity of the Kolyvagin system $\ks$. See \cite[Proposition 4.20]{kks} for detail.

Since $\mathbb{Q}(n)$ is totally real, we have $c_{\mathbb{Q}(n)}(1) = c^+_{\mathbb{Q}(n)}(1)$.
Consider the following commutative diagram
\begin{equation} \label{eqn:derivative_delta_n}
\begin{split}
{ \scriptsize
\xymatrix@C=0.2em@R=1.5em{
\mathrm{H}^1(\mathbb{Q}(n), T) \ar[r]^-{\overline{D}_n} & \mathrm{H}^1(\mathbb{Q}(n), T) \ar[d]^-{\bmod{p}} \ar[rrrr]^-{\left\langle \omega^*_E,  \mathrm{exp}^{*} \left( \mathrm{loc}_p - \right) \right\rangle_{\mathrm{dR}}} & & & & \mathbb{Z}_p \otimes \mathcal{O}_{\mathbb{Q}(n)} \ar[d]^-{\bmod{p}} \\
& \left( \frac{ \mathrm{H}^1(\mathbb{Q}(n), T) }{ p \mathrm{H}^1(\mathbb{Q}(n), T) } \right)^{\mathrm{Gal}( \mathbb{Q}(n)/\mathbb{Q} )} \ar[rrrr]^-{\overline{\left\langle \omega^*_E,  \mathrm{exp}^{*} \left( \mathrm{loc}_p - \right) \right\rangle_{\mathrm{dR}}}} & & & & \mathbb{F}_p \otimes \mathcal{O}_{\mathbb{Q}(n)} \\
 c^+_{\mathbb{Q}(n)}(1) \ar@{|->}[r] &
  \overline{D}_n c^+_{\mathbb{Q}(n)}(1) \ar@{|->}[d] \ar@{|->}[rrrr] & & & & \langle \omega^*_E , \mathrm{exp}^{*}( \mathrm{loc}_p \overline{D}_n  c^{+}_{\mathbb{Q}(n)}(1)) \rangle_{\mathrm{dR}} \ar@{|->}[d]\\
&  d^+_n \ar@{|->}[rrrr] & & & & \langle \omega^*_E , \mathrm{exp}^{*}(\mathrm{loc}_p \overline{D}_n  c^{+}_{\mathbb{Q}(n)}(1)) \rangle_{\mathrm{dR}} \Mod{p} .
}
}
\end{split}
\end{equation}
where
 $\overline{\left\langle \omega^*_E ,  \mathrm{exp}^{*} \left(  - \right) \right\rangle_{\mathrm{dR}}}$ is the induced reduction of $\left\langle \omega^*_E ,  \mathrm{exp}^{*} \left(  - \right) \right\rangle_{\mathrm{dR}}$ modulo $p$.
Because $\overline{D}_n \in \mathbb{Z}[\mathrm{Gal}(\mathbb{Q}(n)/\mathbb{Q})]$ commutes with $\mathrm{loc}_p$, $\mathrm{exp}^*$, and $\langle \omega^*_E , -   \rangle_{\mathrm{dR}}$, we have
$$\langle \omega^*_E ,\mathrm{exp}^{*}(\mathrm{loc}_p \overline{D}_n c^{+}_{\mathbb{Q}(n)}(1)) \rangle_{\mathrm{dR}} = \overline{D}_n \langle \omega^*_E ,\mathrm{exp}^{*}(\mathrm{loc}_p c^{+}_{\mathbb{Q}(n)}(1)) \rangle_{\mathrm{dR}} .$$

\begin{thm} \label{thm:nonvanishing_delta}
If $\kappa_n = 0 \Mod{p}$, then $\widetilde{\delta}_n =0$. 
\end{thm}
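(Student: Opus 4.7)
The plan is to chase the hypothesis $\kappa_n \equiv 0 \pmod{p}$ through the commutative diagram \eqref{eqn:derivative_delta_n} and reduce the conclusion to an explicit mod-$p$ computation of $\overline{D}_n c^{\mathrm{an},+}_{\mathbb{Q}(n)}$. The first step is to identify $\kappa_n \pmod{p}$ with the class $d^+_n \in \bigl(\mathrm{H}^1(\mathbb{Q}(n), T)/p\mathrm{H}^1(\mathbb{Q}(n), T)\bigr)^{\mathrm{Gal}(\mathbb{Q}(n)/\mathbb{Q})}$ that appears in the diagram, via the standard Mazur--Rubin identification (note that $c_{\mathbb{Q}(n)}(1) = c^+_{\mathbb{Q}(n)}(1)$ because $\mathbb{Q}(n)$ is totally real). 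The hypothesis then becomes $d^+_n = 0$, and the bottom horizontal arrow $\overline{\langle \omega^*_E, \mathrm{exp}^*(\mathrm{loc}_p -) \rangle_{\mathrm{dR}}}$ sends it to $0 \in \mathbb{F}_p \otimes \mathcal{O}_{\mathbb{Q}(n)}$. The mod-$p$ reduction on the target is well defined because, for any prime $\mathfrak{p} \mid p$, the completion $\mathbb{Q}(n)_{\mathfrak{p}}$ is unramified over $\mathbb{Q}_p$ (since $(n,p) = 1$), so Corollary \ref{cor:integral_lattice} provides the integral lattice $\mathcal{O}_{\mathbb{Q}(n)_{\mathfrak{p}}} \omega_E$.

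Using the commutativity of \eqref{eqn:derivative_delta_n} together with the fact that $\overline{D}_n \in \mathbb{Z}[\mathrm{Gal}(\mathbb{Q}(n)/\mathbb{Q})]$ commutes with $\mathrm{loc}_p$, $\mathrm{exp}^*$, and $\langle \omega^*_E, -\rangle_{\mathrm{dR}}$, the previous vanishing is equivalent to
\[
\overline{D}_n \bigl\langle \omega^*_E, \mathrm{exp}^*(\mathrm{loc}_p c^+_{\mathbb{Q}(n)}(1)) \bigr\rangle_{\mathrm{dR}} \equiv 0 \pmod{p}.
\]
I would then invoke Proposition \ref{prop:comparing_coeff} (with $c_1$ the identity and trivial character) to replace the pairing against Kato's class by its analytic counterpart, which turns the displayed condition into
\[
\overline{D}_n c^{\mathrm{an},+}_{\mathbb{Q}(n)} \equiv 0 \pmod{p}.
\]

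The final step is to evaluate $\overline{D}_n c^{\mathrm{an},+}_{\mathbb{Q}(n)} \pmod{p}$ explicitly. By Lemma \ref{lem:comparing_kolyvagin_derivatives}, $\overline{D}_n \equiv u^{-1} D_n \pmod{p}$ for some $u \in \mathbb{F}_p^\times$, where $D_n$ is interpreted via the surjection $\mathbb{F}_p[\mathrm{Gal}(\mathbb{Q}(\mu_n)/\mathbb{Q})] \twoheadrightarrow \mathbb{F}_p[\mathrm{Gal}(\mathbb{Q}(n)/\mathbb{Q})]$; this is legitimate because $G^p_n = \mathrm{Gal}(\mathbb{Q}(\mu_n)/\mathbb{Q}(n))$ acts trivially on $c^{\mathrm{an},+}_{\mathbb{Q}(n)} \in \mathbb{Z}_p \otimes \mathcal{O}_{\mathbb{Q}(n)}$. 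Using commutativity of $\mathbb{Z}_p[\mathrm{Gal}(\mathbb{Q}(\mu_n)/\mathbb{Q})]$, I would move $D_n$ past the scalar $\tfrac{1}{n}$, the Euler factors, and the outer sum $\sum_{c_2 \in G^p_n} \sigma_{c_2}$ to act on the inner sum $\sum_a \zeta^a_n [-a/n]^+_f$; after the change of variables $a \mapsto -a$ (using $[-a/n]^+ = [a/n]^+$), Proposition \ref{prop:computation_KS} gives
\[
D_n \Bigl( \sum_{a \in (\mathbb{Z}/n\mathbb{Z})^\times} \zeta^{-a}_n \bigl[ a/n \bigr]^+_f \Bigr) \equiv \sum_{a} \Bigl( \prod_{\ell \mid n} \mathrm{log}_{\mathbb{F}_\ell}(a) \Bigr) \bigl[ a/n \bigr]^+_f \pmod{p},
\]
which is precisely $\widetilde{\delta}_n \in \mathbb{F}_p$ by definition. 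The outer trace $\sum_{c_2} \sigma_{c_2}$ multiplies this scalar by $|G^p_n|$, each Euler factor $1 \mp q^{-1} \sigma^{-1}_q$ acts on a scalar as $1 \mp q^{-1}$, and Assumption \ref{assu:working_assumptions}.(1) guarantees that $|G^p_n|/n$ together with all the Euler factors are $p$-adic units. Altogether, $\overline{D}_n c^{\mathrm{an},+}_{\mathbb{Q}(n)} \equiv (\text{$p$-adic unit}) \cdot \widetilde{\delta}_n \pmod{p}$, which combined with the vanishing from the previous paragraph forces $\widetilde{\delta}_n = 0$.

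The main obstacle I would expect to pin down carefully is the first step: the precise identification of $\kappa_n \pmod{p}$ with $d^+_n$ (and of its image under $\mathrm{loc}_p$) along the natural maps between the various Iwasawa and Galois cohomology groups, which is where one must unpack the Mazur--Rubin Kolyvagin-system formalism. A secondary technicality is the unit verification for $|G^p_n|/n$ and the Euler factors, which amounts to unwinding Assumption \ref{assu:working_assumptions}.(1) in terms of $p$-adic valuations.
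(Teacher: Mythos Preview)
Your proposal is correct and follows essentially the same route as the paper's proof: pass from $\kappa_n \equiv 0 \pmod{p}$ to $d^+_n = 0$ via the Mazur--Rubin formalism, push through the diagram \eqref{eqn:derivative_delta_n} and Proposition~\ref{prop:comparing_coeff} to obtain $\overline{D}_n c^{\mathrm{an},+}_{\mathbb{Q}(n)} \equiv 0 \pmod{p}$, then use Lemma~\ref{lem:comparing_kolyvagin_derivatives} and Proposition~\ref{prop:computation_KS} together with Assumption~\ref{assu:working_assumptions}.(1) to extract $\widetilde{\delta}_n = 0$. One small remark: the factor $|G^p_n|/n$ is a $p$-adic unit automatically (each Kolyvagin prime $\ell$ is prime to $p$, and $|G^p_n|$ is the prime-to-$p$ part of $\prod_{\ell\mid n}(\ell-1)$), so it does not rely on Assumption~\ref{assu:working_assumptions}.(1); only the bad Euler factors $1 \mp q^{-1}$ require that assumption.
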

The proof is identical with that given in \cite{kks}.
\begin{proof}
Suppose that $\kappa_n \Mod{p} = 0$ in $\mathrm{H}^1_{\mathcal{F}(n)}(\mathbb{Q}, T/ p T) \otimes G_n$.
Then, following \cite[Appendix A]{mazur-rubin-book}, a straightforward computation yields
$$d^+_n = 0 \in \mathrm{H}^1(\mathbb{Q}(n), T)/p \mathrm{H}^1(\mathbb{Q}(n), T).$$
Thus, $\overline{D}_n c^+_{\mathbb{Q}(n)}(1) \in p\mathrm{H}^1(\mathbb{Q}(n), T)$.
By Proposition \ref{prop:comparing_coeff}, if $\overline{D}_n \left\langle \omega^*_E ,  \mathrm{exp}^{*} \left( \mathrm{loc}_p c^{+}_{\mathbb{Q}(n)}(1) \right) \right\rangle_{\mathrm{dR}} \in p\mathbb{Z}_p \otimes \mathcal{O}_{\mathbb{Q}(n)}$, then we have 
\begin{equation} \label{eqn:valuation_derivatives}
\left( \overline{D}_n c^{\mathrm{an}, +}_{\mathbb{Q}(n)} \right) \in p\mathbb{Z}_p \otimes \mathcal{O}_{\mathbb{Q}(n)}.
\end{equation}
Due to Lemma \ref{lem:comparing_kolyvagin_derivatives} and Proposition \ref{prop:computation_KS}, (\ref{eqn:valuation_derivatives}) is equivalent to
$$ \left( \prod_{q \mid N_{\mathrm{sp}}} ( 1-  q^{-1} \sigma^{-1}_q )  \right) \cdot \left( \prod_{q \mid N_{\mathrm{ns}}} ( 1 + q^{-1} \sigma^{-1}_q )  \right)  \cdot  \widetilde{\delta}_n  = 0$$
in $\mathbb{F}_p \otimes \mathcal{O}_{\mathbb{Q}(n)}$ and indeed, $\widetilde{\delta}_n \in \mathbb{F}_p$; thus, it is equivalent to
$$\left( \prod_{q \mid N_{\mathrm{sp}}} ( 1-  q^{-1} )  \right) \cdot \left( \prod_{q \mid N_{\mathrm{ns}}} ( 1 + q^{-1}  )  \right) \cdot  \widetilde{\delta}_n = 0 \in  \mathbb{F}_{p} .$$
Due to Assumption \ref{assu:working_assumptions}.(1), it is equivalent to
$$ \widetilde{\delta}_n = 0 \in  \mathbb{F}_{p} .$$
\end{proof}
As a result of Theorem \ref{thm:nonvanishing_delta}, we obtain an analogue of Conjecture \ref{conj:kato-main-conjecture} with $\mathbf{z}^{(N)}_{\mathrm{Kato}}$ (defined in (\ref{eqn:imprimitive_z_kato})). By Lemma \ref{lem:comparing_main_conj_imprimitive}, Conjecture \ref{conj:kato-main-conjecture} follows.

\section{The $p$-part of the Birch and Swinnerton-Dyer formula for the rank zero case}
The goal of this section is to prove Theorem \ref{thm:main_thm_bsd}, which generalizes \cite[Theorem 6.2.4]{mazur-rubin-book} to the additive reduction case when the corresponding Kolyvagin system is primitive.
\begin{assu}[$\mathrm{Hyp}(\mathbb{Q}, T)$, {\cite[$\S$2.1]{rubin-book}}] $ $
\begin{enumerate}
\item There exists an element $\tau \in G_{\mathbb{Q}}$ such that
\begin{itemize}
\item $\tau$ acts trivially on $\mu_{p^\infty}$, and
\item $T/(\tau - 1)T$ is free of rank one over $\mathbb{Z}_p$.
\end{itemize}
\item $\overline{\rho}$ is irreducible.
\end{enumerate}
\end{assu}

We recall the ``error term" in the Euler system argument.
Let $W = E[p^\infty]$ and $\mathbb{Q}(W)$ be the smallest extension of $\mathbb{Q}$ where $G_{\mathbb{Q}(W)}$ acts trivially on $E[p^\infty]$.
Let
\begin{align*}
\mathfrak{n}_{E[p^\infty]} & :=\mathrm{length}_{\mathbb{Z}_p} \left( \mathrm{H}^1(\mathbb{Q}(W)/\mathbb{Q}, E[p^\infty]) \cap \mathrm{Sel}_{\mathrm{rel}}(\mathbb{Q}, E[p^\infty]) \right), \\
\mathfrak{n}^*_{E[p^\infty]} & :=\mathrm{length}_{\mathbb{Z}_p} \left( \mathrm{H}^1(\mathbb{Q}(W)/\mathbb{Q}, E[p^\infty]) \cap \mathrm{Sel}_{\mathrm{str}}(\mathbb{Q}, E[p^\infty]) \right) .
\end{align*}
By \cite[Proposition 3.5.8.(ii)]{rubin-book}, if the $p$-adic representation $\rho : G_{\mathbb{Q}} \to \mathrm{Aut}_{\mathbb{Z}_p}(T)$ is surjective, then $$\mathrm{H}^1(\mathbb{Q}(W)/\mathbb{Q}, E[p^\infty]) \simeq \mathrm{H}^1(\mathrm{GL}_2(\mathbb{Z}_p), (\mathbb{Q}_p/\mathbb{Z}_p)^2 ) = 0,$$ so
$\mathfrak{n}_{E[p^\infty]} = \mathfrak{n}^*_{E[p^\infty]} = 0$.
Furthermore, if $\overline{\rho}$ is surjective with $p>3$, then $\rho$ is surjective since the determinant of $\rho$ is the cyclotomic character by \cite[Lemma 1, $\S$1]{swinnerton-dyer-congruences}.

The machinery of Euler and Kolyvagin systems yields the following theorem.
\begin{thm} \label{thm:bounding_strict_selmer}
Suppose that $p >2$ and $T$ satisfies $\mathrm{Hyp}(\mathbb{Q},T)$.
Let $\es(1)$ be an Euler system for $T$ and $\ks$ be the corresponding Kolyvagin system with $c_{\mathbb{Q}}(1) = \kappa_1$.
Assume that $\mathrm{loc}^s_p(c_{\mathbb{Q}}(1)) = \mathrm{loc}^s_p(\kappa_1) \neq 0$.
\begin{enumerate}
\item $\mathrm{length}_{\mathbb{Z}_p} \left( \mathrm{Sel}_{\mathrm{str}}(\mathbb{Q},E[p^\infty]) \right)$ is finite if and only if $\kappa_1 \neq 0$.
\item $\mathrm{length}_{\mathbb{Z}_p} \left( \mathrm{Sel}_{\mathrm{str}}(\mathbb{Q},E[p^\infty])  \right) \leq \partial^{(0)} (\ks ) := \mathrm{max}\lbrace  j : \kappa_1 \in p^j \mathrm{Sel}_{\mathrm{rel}}(\mathbb{Q}, T) \rbrace$.
\item If $\ks$ is primitive, then
$$\mathrm{length}_{\mathbb{Z}_p} \left( \mathrm{Sel}_{\mathrm{str}}(\mathbb{Q},E[p^\infty])  \right) = \partial^{(0)} (\ks ) .$$
\end{enumerate}
\end{thm}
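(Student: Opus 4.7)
The plan is to recognize this as the classical Kolyvagin system bound on a strict Selmer group, carried out via Mazur--Rubin \cite{mazur-rubin-book}. First I would introduce the Selmer structure $\mathcal{F}_{\mathrm{rel}}$ on $T$ that is relaxed at $p$, i.e.\ whose local condition at $p$ is the full $\mathrm{H}^1(\mathbb{Q}_p, T)$, and agrees with the canonical Selmer structure at all other primes. Its local dual $\mathcal{F}_{\mathrm{rel}}^*$ on $W = E[p^\infty]$ is strict at $p$, and Poitou--Tate identifies $\mathrm{H}^1_{\mathcal{F}_{\mathrm{rel}}^*}(\mathbb{Q}, W)$ with $\mathrm{Sel}_{\mathrm{str}}(\mathbb{Q}, E[p^\infty])$. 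The hypothesis $\mathrm{loc}_p^s(\kappa_1) \neq 0$ says exactly that $\kappa_1 \notin \mathrm{H}^1_f(\mathbb{Q}_p, T)$, so $\kappa_1 \in \mathrm{Sel}_{\mathrm{rel}}(\mathbb{Q}, T) = \mathrm{H}^1_{\mathcal{F}_{\mathrm{rel}}}(\mathbb{Q}, T)$, and the associated Kolyvagin system $\ks$ lies in $\mathbf{KS}(T, \mathcal{F}_{\mathrm{rel}}, \mathcal{P})$ for a suitable Chebotarev-dense set $\mathcal{P}$ of Kolyvagin primes, whose existence is guaranteed by $\mathrm{Hyp}(\mathbb{Q}, T)$.

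With this setup, parts (1) and (2) follow from the Mazur--Rubin Kolyvagin bound \cite[Theorem 5.2.2]{mazur-rubin-book} applied to $\ks$, yielding
$$\mathrm{length}_{\mathbb{Z}_p}\bigl(\mathrm{Sel}_{\mathrm{str}}(\mathbb{Q}, E[p^\infty])\bigr) \le \partial^{(0)}(\ks),$$
with $\partial^{(0)}(\ks)$ finite if and only if $\kappa_1 \ne 0$, which gives the dichotomy in (1). The \emph{a priori} error terms in that bound are precisely $\mathfrak{n}_{E[p^\infty]}$ and $\mathfrak{n}^*_{E[p^\infty]}$ introduced just before the statement, and these both vanish because surjectivity of $\overline{\rho}$ together with $p > 3$ upgrades to surjectivity of the full $\rho$ via \cite{swinnerton-dyer-congruences}, killing $\mathrm{H}^1(\mathbb{Q}(W)/\mathbb{Q}, E[p^\infty])$ by \cite[Proposition 3.5.8.(ii)]{rubin-book}.

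For (3), I would invoke the primitivity refinement of the same Mazur--Rubin bound: when $\ks$ is primitive, i.e.\ its mod-$p$ reduction is nonzero in $\mathbf{KS}(T/pT, \mathcal{F}_{\mathrm{rel}}, \mathcal{P})$, the higher Kolyvagin derivative valuations $\partial^{(i)}(\ks)$ recover the full sequence of elementary divisors of $\mathrm{Sel}_{\mathrm{str}}(\mathbb{Q}, E[p^\infty])$, forcing equality in (2). The main obstacle I anticipate is the careful local bookkeeping: verifying that the strict Selmer group in the paper's conventions agrees with $\mathrm{H}^1_{\mathcal{F}_{\mathrm{rel}}^*}(\mathbb{Q}, W)$ at all primes (especially primes of additive reduction $\ell \mid N$), and confirming that Kato's Euler system, with the normalization used in Section \ref{subsec:euler_systems_modular_symbols}, gives rise to a genuine Kolyvagin system for $(T, \mathcal{F}_{\mathrm{rel}}, \mathcal{P})$ in this additive reduction setting.
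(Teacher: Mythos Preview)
Your proposal is correct and takes essentially the same approach as the paper: the paper's own proof consists entirely of the references ``See \cite[Theorem 2.2.2]{rubin-book} and \cite[Corollary 5.2.13]{mazur-rubin-book} for detail,'' and your write-up simply unpacks those citations (with the surrounding discussion of $\mathfrak{n}_{E[p^\infty]}$, $\mathfrak{n}^*_{E[p^\infty]}$ that the paper places just before the statement). The only minor discrepancy is that you point to \cite[Theorem 5.2.2]{mazur-rubin-book} for the bound, whereas the paper cites \cite[Corollary 5.2.13]{mazur-rubin-book}; either reference lands in the same Mazur--Rubin machinery.
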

\begin{proof}
See \cite[Theorem 2.2.2]{rubin-book} and \cite[Corollary 5.2.13]{mazur-rubin-book} for detail.
\end{proof}


\begin{lem}[{\hspace{1sp}\cite[1.9. Proposition]{coates-sujatha-galois}}] \label{lem:vanishing_H2}
If $\mathrm{Sel}(\mathbb{Q},E[p^\infty])$ is finite, then we have the following statements:
\begin{enumerate}
\item $\mathrm{H}^2 (\mathbb{Q}_{\Sigma}/\mathbb{Q}, E[p^\infty]) = 0$;
\item if we further assume $E(\mathbb{Q})[p]$ is trivial, then the global-to-local map defining Selmer groups is surjective. 
\end{enumerate}
\end{lem}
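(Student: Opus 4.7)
The plan is to invoke the Poitou--Tate nine-term exact sequence for the $G_{\mathbb{Q},\Sigma}$-module $W := E[p^\infty]$ and its Tate dual $T := \mathrm{Ta}_pE$, where $\Sigma$ is the finite set of places of $\mathbb{Q}$ dividing $Np\infty$. In its Selmer-theoretic form this reads
\[
0 \to \mathrm{Sel}(\mathbb{Q}, W) \to H^1(\mathbb{Q}_\Sigma/\mathbb{Q}, W) \xrightarrow{\lambda} \bigoplus_{v \in \Sigma} \frac{H^1(\mathbb{Q}_v, W)}{L_v} \to \mathrm{Sel}^*(\mathbb{Q}, T)^\vee \to H^2(\mathbb{Q}_\Sigma/\mathbb{Q}, W) \to \bigoplus_{v \in \Sigma} H^2(\mathbb{Q}_v, W) \to (T^{G_\mathbb{Q}})^\vee \to 0,
\]
with $L_v$ the Kummer image of $E(\mathbb{Q}_v)\otimes\mathbb{Q}_p/\mathbb{Z}_p$ in $H^1(\mathbb{Q}_v, W)$ and $\mathrm{Sel}^*(\mathbb{Q}, T)$ the dual (compact) Selmer group cut out by the annihilator local conditions $L_v^\perp$. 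Assertion (1) then corresponds to the vanishing of the fifth term, and assertion (2) to the surjectivity of $\lambda$, both to be read off by exactness.

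For (1), finiteness of $\mathrm{Sel}(\mathbb{Q}, W)$ forces $\mathrm{rank}\, E(\mathbb{Q}) = 0$, so $H^0(\mathbb{Q}_\Sigma/\mathbb{Q}, W) = E(\mathbb{Q})[p^\infty]$ has $\mathbb{Z}_p$-corank zero, and a Pontryagin-dual argument gives the analogous finiteness of $\mathrm{Sel}^*(\mathbb{Q}, T)$. Combining the global Euler--Poincar\'e characteristic
\[
\mathrm{corank}\, H^0 - \mathrm{corank}\, H^1 + \mathrm{corank}\, H^2 = -2
\]
with a corank count along the displayed sequence, using local Tate duality and local Euler characteristics at each $v \in \Sigma$, one forces $\mathrm{corank}\, H^2(\mathbb{Q}_\Sigma/\mathbb{Q}, W) = 0$. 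To upgrade corank-zero to honest vanishing, use local duality $H^2(\mathbb{Q}_v, W) \cong H^0(\mathbb{Q}_v, T)^\vee$ and check via Assumption \ref{assu:working_assumptions} that $T^{G_{\mathbb{Q}_v}} = 0$ for every $v \in \Sigma$: at $v=p$ this uses the surjectivity of $\overline\rho$ and additive reduction, while at multiplicative primes $\ell$ the hypotheses $p \nmid \ell \mp 1$ prevent Frobenius from having eigenvalue $1$ on $T^{I_\ell}$. The right tail of the sequence then kills any residual $p$-torsion in $H^2(\mathbb{Q}_\Sigma/\mathbb{Q}, W)$.

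For (2), the vanishing from (1) shortens the sequence to $\cdots \to \bigoplus_v H^1(\mathbb{Q}_v, W)/L_v \to \mathrm{Sel}^*(\mathbb{Q}, T)^\vee \to 0$. The extra assumption $E(\mathbb{Q})[p] = 0$ gives $T^{G_\mathbb{Q}} = 0$, so $\mathrm{Sel}^*(\mathbb{Q}, T) \subset H^1(\mathbb{Q}_\Sigma/\mathbb{Q}, T)$ is a torsion-free $\mathbb{Z}_p$-module; combined with its finiteness (already known from (1)), it must vanish, and the surjectivity of $\lambda$ then drops out of exactness. The main obstacle is the precise corank bookkeeping in the proof of (1): one must balance the contribution $\mathrm{corank}(H^1(\mathbb{Q}_p, W)/L_p) = 1$ at the additive prime $p$ against the global shift $-2$ and the Archimedean and bad-reduction contributions so that $\mathrm{corank}\, H^2$ is forced to $0$ rather than $\geq 1$. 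In the additive-reduction setting this is slightly more delicate than in the good-reduction case, since the local condition $L_p$ is no longer obtained from a Coleman map but has to be extracted from the direct description in Corollary \ref{cor:integral_lattice}.
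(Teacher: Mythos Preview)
The paper does not give its own proof of this lemma; it simply cites \cite[Proposition~1.9]{coates-sujatha-galois} (and, immediately afterwards, notes the equivalence of (1) with finiteness of the $p$-strict Selmer group via \cite{hachimori_fine}). So your proposal should be judged on its own merits rather than by comparison.

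Your overall strategy---Poitou--Tate plus an Euler-characteristic corank count---is exactly the standard one, but two points need correction.

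\textbf{(i) The global Euler characteristic is $-1$, not $-2$.} For $W=E[p^\infty]$ over~$\mathbb{Q}$ one has
\[
\mathrm{corank}\,H^0-\mathrm{corank}\,H^1+\mathrm{corank}\,H^2
=-\dim V^{c=-1}=-1,
\]
since complex conjugation has eigenvalues $\pm1$ on $V=T\otimes\mathbb{Q}_p$. (The value $-2$ is the \emph{local} Euler characteristic at~$p$.) This changes your corank bookkeeping, although the conclusion $\mathrm{corank}\,H^2=0$ still comes out after the count is redone.

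\textbf{(ii) The passage from corank zero to honest vanishing is not as you describe.} Knowing $\bigoplus_{v}H^2(\mathbb{Q}_v,W)=0$ only shows, via your displayed sequence, that $H^2(\mathbb{Q}_\Sigma/\mathbb{Q},W)$ is a quotient of $\mathrm{Sel}^*(\mathbb{Q},T)^\vee$; it does not by itself kill the ``residual $p$-torsion''. Moreover, your verification that $T^{G_{\mathbb{Q}_v}}=0$ at every $v\in\Sigma$ imports Assumption~\ref{assu:working_assumptions}, which is not a hypothesis of the lemma. The clean (and hypothesis-free) argument is: since $p$ is odd, $\mathrm{cd}_p(G_{\mathbb{Q},\Sigma})=2$, so the long exact sequence attached to $0\to E[p]\to W\xrightarrow{p}W\to 0$ shows that multiplication by~$p$ is surjective on $H^2(\mathbb{Q}_\Sigma/\mathbb{Q},W)$. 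A cofinitely generated $p$-divisible $\mathbb{Z}_p$-module of corank~$0$ is zero, and (1) follows. With (1) in hand, your argument for (2) is correct.

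Finally, the worry you raise at the end is a red herring: for any elliptic curve over~$\mathbb{Q}_p$, regardless of reduction type, $E(\mathbb{Q}_p)\otimes\mathbb{Z}_p$ has $\mathbb{Z}_p$-rank~$1$, so $\mathrm{corank}\bigl(H^1(\mathbb{Q}_p,W)/L_p\bigr)=1$ follows from local Tate duality alone; Corollary~\ref{cor:integral_lattice} is not needed here.
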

Indeed, the vanishing of $\mathrm{H}^2(\mathbb{Q}_{\Sigma}/\mathbb{Q}, E[p^\infty])$ is equivalent to the finiteness of the $p$-strict Selmer group. See \cite[Lemma 3.2]{hachimori_fine}.

Using (the Pontryagin dual of) the second sequence of \cite[A.3.2]{perrin-riou-book} with the $p$-strict local condition for $B$ and the $p$-relaxed local condition for $A$, we have the following exact sequence (see also \cite[(7.18)]{kobayashi-thesis})
\begin{equation} \label{eqn:poitou-tate}
{\small
\xymatrix@C=0.7em{
\mathrm{H}^2 (\mathbb{Q}_{\Sigma}/\mathbb{Q}, E[p^\infty])^\vee  \ar[r] & \mathrm{Sel}_{\mathrm{rel}} (\mathbb{Q}, T)
\ar[r] & \dfrac{\mathrm{H}^1 (\mathbb{Q}_p, T)}{E(\mathbb{Q}_p) \otimes \mathbb{Z}_p}
\ar[r] &  \mathrm{Sel} (\mathbb{Q}, E[p^\infty])^\vee  \ar[r] & \mathrm{Sel}_{\mathrm{str}} (\mathbb{Q}, E[p^\infty])^\vee \ar[r] &0 .
}
}
\end{equation}
Note that here we use Lemma \ref{lem:vanishing_H2}.(2) to have the third and the fourth terms.

Furthermore, under the finiteness of the Selmer group (due to Lemma \ref{lem:vanishing_H2}.(1)), (\ref{eqn:poitou-tate}) becomes
\[
{\small
\xymatrix@C=1.0em{
0  \ar[r] & \mathrm{Sel}_{\mathrm{rel}} (\mathbb{Q}, T)
\ar[r] & \dfrac{\mathrm{H}^1 (\mathbb{Q}_p, T)}{E(\mathbb{Q}_p) \otimes \mathbb{Z}_p}
\ar[r] &  \mathrm{Sel} (\mathbb{Q}, E[p^\infty])^\vee  \ar[r] & \mathrm{Sel}_{\mathrm{str}} (\mathbb{Q}, E[p^\infty])^\vee \ar[r] &0 .
}
}
\]
Taking the quotient by the $\mathbb{Z}_p$-module generated by $c_{\mathbb{Q}}(1)$, Kato's Euler system at $\mathbb{Q}$, we have
\begin{equation} \label{eqn:poitou-tate-2}
{\small
\xymatrix@C=1.0em{
0  \ar[r] & \dfrac{ \mathrm{Sel}_{\mathrm{rel}} (\mathbb{Q}, T) } { \mathbb{Z}_p c_{\mathbb{Q}}(1) }
\ar[r] & \dfrac{\mathrm{H}^1_s (\mathbb{Q}_p, T)}{ \mathbb{Z}_p \mathrm{loc}^s_p c_{\mathbb{Q}}(1) }
\ar[r] &  \mathrm{Sel} (\mathbb{Q}, E[p^\infty])^\vee  \ar[r] & \mathrm{Sel}_{\mathrm{str}} (\mathbb{Q}, E[p^\infty])^\vee \ar[r] &0 
}
}
\end{equation}
where $\mathrm{H}^1_s (\mathbb{Q}_p, T) =  \frac{\mathrm{H}^1 (\mathbb{Q}_p, T)}{E(\mathbb{Q}_p) \otimes \mathbb{Z}_p}$.

The following theorem is the generalized and sharpened version of \cite[Theorem 2.2.10.(ii)]{rubin-book}.
\begin{thm} \label{thm:bounding_selmer}
Suppose that $p >2$ and $T$ satisfies $\mathrm{Hyp}(\mathbb{Q},T)$.
Let $\es(1)$ be an Euler system for $T$ and $\ks$ be the corresponding Kolyvagin system with $c_{\mathbb{Q}}(1) = \kappa_1$.
Assume that $\mathrm{loc}^s_p(\kappa_1) \neq 0$ and $\ks$ is primitive.
Then $\mathrm{Sel}(\mathbb{Q}, E[p^\infty])$ is finite and
$$\mathrm{length}_{\mathbb{Z}_p} \left( \mathrm{Sel}(\mathbb{Q}, E[p^\infty]) \right) = \mathrm{length}_{\mathbb{Z}_p} \mathrm{H}^1_s(\mathbb{Q}_p, T) / \mathrm{loc}^s_p(\kappa_1) . $$
\end{thm}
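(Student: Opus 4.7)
The plan is to read the desired length equality directly off the four-term exact sequence (\ref{eqn:poitou-tate-2}), using primitivity of $\ks$ to pin down the outer terms. Before invoking (\ref{eqn:poitou-tate-2}) one must know that $\mathrm{Sel}(\mathbb{Q},E[p^\infty])$ is finite, so that Lemma \ref{lem:vanishing_H2} applies. Under the standing hypothesis $\mathrm{loc}^s_p(\kappa_1)\neq 0$, this finiteness is the classical Euler-system output (\cite[Theorem 2.2.10.(i)]{rubin-book}); it also follows a posteriori from the rank computation carried out below together with Theorem \ref{thm:bounding_strict_selmer}.(1).

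Next I would identify the outer lengths in (\ref{eqn:poitou-tate-2}). For the rightmost term, the primitivity of $\ks$ combined with Theorem \ref{thm:bounding_strict_selmer}.(3) yields
$$\mathrm{length}_{\mathbb{Z}_p}\mathrm{Sel}_{\mathrm{str}}(\mathbb{Q}, E[p^\infty]) = \partial^{(0)}(\ks).$$
For the leftmost term, I would first establish that $\mathrm{Sel}_{\mathrm{rel}}(\mathbb{Q},T)$ is free of rank one over $\mathbb{Z}_p$. Torsion-freeness follows from $\mathrm{H}^0(\mathbb{Q},E[p])=0$, which is built into $\mathrm{Hyp}(\mathbb{Q},T)$ via the existence of $\tau$. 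The rank is at most one because the initial piece of (\ref{eqn:poitou-tate}) embeds $\mathrm{Sel}_{\mathrm{rel}}(\mathbb{Q},T)$ into $\mathrm{H}^1_s(\mathbb{Q}_p,T)$, while local Tate duality together with the rank-one-ness of $E(\mathbb{Q}_p)\otimes\mathbb{Z}_p$ forces $\mathrm{rk}_{\mathbb{Z}_p}\mathrm{H}^1_s(\mathbb{Q}_p,T)=1$. The rank is at least one because $\mathrm{loc}^s_p(\kappa_1)\neq 0$ witnesses a non-torsion element of $\mathrm{Sel}_{\mathrm{rel}}(\mathbb{Q},T)$. Consequently, directly from the definition of $\partial^{(0)}(\ks)$,
$$\mathrm{length}_{\mathbb{Z}_p}\bigl(\mathrm{Sel}_{\mathrm{rel}}(\mathbb{Q},T)/\mathbb{Z}_p\kappa_1\bigr) = \partial^{(0)}(\ks).$$

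Finally, I would invoke additivity of $\mathrm{length}_{\mathbb{Z}_p}$ along (\ref{eqn:poitou-tate-2}): the two occurrences of $\partial^{(0)}(\ks)$ cancel and produce
$$\mathrm{length}_{\mathbb{Z}_p}\mathrm{Sel}(\mathbb{Q}, E[p^\infty]) = \mathrm{length}_{\mathbb{Z}_p}\bigl(\mathrm{H}^1_s(\mathbb{Q}_p,T)/\mathbb{Z}_p\,\mathrm{loc}^s_p(\kappa_1)\bigr),$$
as required. The only genuinely nontrivial input is the equality (not mere inequality) in Theorem \ref{thm:bounding_strict_selmer}.(3); this is where primitivity of $\ks$ is indispensable and constitutes the main obstacle in the argument. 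Everything else reduces to bookkeeping with lengths once primitivity has done its work.
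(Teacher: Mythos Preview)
Your proposal is correct and follows essentially the same route as the paper's own proof: both pass through the four-term sequence (\ref{eqn:poitou-tate-2}), identify the two outer lengths with $\partial^{(0)}(\ks)$ via Theorem \ref{thm:bounding_strict_selmer}.(3) and the rank-one freeness of $\mathrm{Sel}_{\mathrm{rel}}(\mathbb{Q},T)$, and then cancel. One small correction: the vanishing $\mathrm{H}^0(\mathbb{Q},E[p])=0$ follows from condition (2) of $\mathrm{Hyp}(\mathbb{Q},T)$ (irreducibility of $\overline{\rho}$), not from the existence of $\tau$.
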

\begin{proof}
The finiteness of Selmer groups is already proved in \cite[Theorem 2.2.10.(ii)]{rubin-book}.
We have
\begin{align*}
\partial^{(0)} (\ks ) & = \mathrm{length}_{\mathbb{Z}_p}  \left( \mathrm{loc}^s_p \left( \mathrm{Sel}_{\mathrm{rel}}(\mathbb{Q}, T) \right) / \mathbb{Z}_p \mathrm{loc}^s_p(\kappa_1) \right) & \textrm{(\ref{eqn:poitou-tate-2})} \\
& < \infty. & ( \mathrm{loc}^s_p (\kappa_1) \neq 0 )
\end{align*}
By Theorem \ref{thm:bounding_strict_selmer}.(3), we have
$$\mathrm{length}_{\mathbb{Z}_p} \left( \mathrm{Sel}_{\mathrm{str}}(\mathbb{Q},E[p^\infty])  \right) = \mathrm{length}_{\mathbb{Z}_p}  \left( \mathrm{loc}^s_p \left( \mathrm{Sel}_{\mathrm{rel}}(\mathbb{Q}, T) \right) / \mathbb{Z}_p \mathrm{loc}^s_p(\kappa_1) \right)  < \infty.$$
Then, by Sequence (\ref{eqn:poitou-tate-2}) again,
$$\mathrm{length}_{\mathbb{Z}_p} \left( \mathrm{Sel}(\mathbb{Q},E[p^\infty])  \right) = \mathrm{length}_{\mathbb{Z}_p}  \left( \mathrm{H}^1_s(\mathbb{Q}_p, T) / \mathbb{Z}_p \mathrm{loc}^s_p(\kappa_1) \right)  < \infty.$$
\end{proof}
Applying Theorem \ref{thm:kato_interpolation}, we extend \cite[Theorem 3.5.11]{rubin-book} to the additive reduction case.
\begin{thm}
Let $E$ be a non-CM elliptic curve over $\mathbb{Q}$ with additive reduction at an odd prime $p > 7$.
Assume that
 $L(E,1) \neq 0$,  $\overline{\rho}$ is surjective, $p \nmid \mathrm{Tam}(E)$ and $\ks$ is primitive.
Then $$ \mathrm{ord}_p \left( \# \sha (E/\mathbb{Q})[p^\infty] \right) =\mathrm{ord}_p \left( \dfrac{L(E,1)}{\Omega^+_E} \right) .$$
\end{thm}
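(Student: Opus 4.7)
The plan is to combine the local lattice computation of Section 2, the interpolation formula (Theorem \ref{thm:kato_interpolation}), and the Euler-system/Kolyvagin Selmer bound (Theorem \ref{thm:bounding_selmer}). First, I verify the hypotheses of Theorem \ref{thm:bounding_selmer}: surjectivity of $\overline{\rho}$ with $p > 3$ delivers $\mathrm{Hyp}(\mathbb{Q}, T)$ (together with $E(\mathbb{Q})[p] = 0$); primitivity of $\ks$ is assumed; and the non-vanishing $\mathrm{loc}^s_p(\kappa_1) \neq 0$ follows by local Tate duality from
\[
\langle \omega^*_E, \exp^*(\kappa_1)\rangle_{\mathrm{dR}} = \prod_{q \mid N_{\mathrm{sp}}}(1 - q^{-1}) \prod_{q \mid N_{\mathrm{ns}}}(1 + q^{-1}) \cdot \frac{L(E,1)}{\Omega^+_E},
\]
the instance of Theorem \ref{thm:kato_interpolation} at the trivial character with $n=1$ (using that $L^{(p)}(E,1) = L(E,1)$ because the additive Euler factor at $p$ is trivial); this is nonzero by hypothesis.

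Theorem \ref{thm:bounding_selmer} then yields
\[
\mathrm{length}_{\mathbb{Z}_p} \mathrm{Sel}(\mathbb{Q}, E[p^\infty]) = \mathrm{length}_{\mathbb{Z}_p}\bigl( \mathrm{H}^1_s(\mathbb{Q}_p, T) / \mathbb{Z}_p \cdot \mathrm{loc}^s_p(\kappa_1) \bigr).
\]
To compute the right-hand side I apply Corollary \ref{cor:integral_lattice} for $K = \mathbb{Q}_p$: the dual exponential map identifies $\mathrm{H}^1_s(\mathbb{Q}_p, T) \cong \mathbb{Z}_p \cdot \omega_E$, so this length equals the $p$-adic valuation of the displayed expression. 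Provided that the auxiliary Euler factors are $p$-units (the full strength of Assumption \ref{assu:working_assumptions}.(1); otherwise one replaces $L(E,1)$ by $L^{(N)}(E,1)$ per Remark \ref{rem:exceptional_case}.(4)), this valuation equals $\mathrm{ord}_p(L(E,1)/\Omega^+_E)$.

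To pass from the Selmer length to the order of \textcyr{Sh}, observe that $L(E,1)\ne 0$ together with the non-vanishing above forces rank zero, while $\overline{\rho}$ surjective with $p>3$ forces $E(\mathbb{Q})[p]=0$; hence $E(\mathbb{Q})\otimes\mathbb{Z}_p=0$ and the tautological sequence
\[
0 \to E(\mathbb{Q}) \otimes \mathbb{Q}_p/\mathbb{Z}_p \to \mathrm{Sel}(\mathbb{Q}, E[p^\infty]) \to \sha(E/\mathbb{Q})[p^\infty] \to 0
\]
collapses to $\#\mathrm{Sel}(\mathbb{Q}, E[p^\infty]) = \#\sha(E/\mathbb{Q})[p^\infty]$. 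I expect the main technical friction to lie in the bookkeeping of Euler factors in the interpolation formula: one needs to confirm that the products $\prod_{q\mid N_{\mathrm{mult}}}(1\mp q^{-1})$ contribute no $p$-power, which is precisely the point where the stronger form of Assumption \ref{assu:working_assumptions}.(1) implicitly enters beyond the stated $p\nmid \mathrm{Tam}(E)$; the rest of the argument is a clean assembly of Corollary \ref{cor:integral_lattice}, Theorem \ref{thm:kato_interpolation}, and Theorem \ref{thm:bounding_selmer}.
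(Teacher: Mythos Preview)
Your proposal is correct and follows essentially the same approach as the paper: invoke Corollary~\ref{cor:integral_lattice} to identify $\exp^*\bigl(\mathrm{H}^1_s(\mathbb{Q}_p,T)\bigr)$ with $\mathbb{Z}_p\omega_E$, use Theorem~\ref{thm:kato_interpolation} at the trivial character to see $\mathrm{loc}^s_p(\kappa_1)\neq 0$ and to compute the index, apply Theorem~\ref{thm:bounding_selmer}, and finally identify $\mathrm{Sel}$ with $\sha[p^\infty]$ via surjectivity of $\overline\rho$. Your bookkeeping of the multiplicative Euler factors is in fact more explicit than the paper's own proof, and you correctly flag that passing from $L^{(Np)}(E,1)$ to $L(E,1)$ uses the full strength of Assumption~\ref{assu:working_assumptions}.(1) rather than merely $p\nmid\mathrm{Tam}(E)$ (cf.\ Remark~\ref{rem:exceptional_case}.(4)).
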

\begin{proof}
By Corollary \ref{cor:integral_lattice}, we have
\[
\xymatrix{
\left\langle \omega^*_E ,  \mathrm{exp}^* \left( \mathrm{H}^1_s(\mathbb{Q}_p, T) \right) \right\rangle = \mathbb{Z}_p \subseteq \mathbb{Q}_p , & \left\langle \omega^*_E ,  \mathrm{exp}^* \left( \mathbb{Z}_p \mathrm{loc}^s_p \kappa_1 \right) \right\rangle = \frac{L(E,1)}{\Omega^+_E}\mathbb{Z}_p .
}
\]
Since $L(E,1) \neq 0$, we have $\mathrm{loc}^s_p(\kappa_1) \neq 0$ via Theorem \ref{thm:kato_interpolation}.
Also, the finiteness of $\mathrm{Sel}(\mathbb{Q}, E[p^\infty])$ and the surjectivity of $\overline{\rho}$ show that
$$\#\mathrm{Sel}(\mathbb{Q}, E[p^\infty]) = \#\sha(E/\mathbb{Q})[p^\infty] .$$
By Theorem \ref{thm:bounding_selmer}, we obtain
$$\mathrm{length}_{\mathbb{Z}_p} \left( \mathrm{Sel}(\mathbb{Q}, E[p^\infty]) \right) = \mathrm{length}_{\mathbb{Z}_p} \mathrm{H}^1_s(\mathbb{Q}_p, T) / \mathrm{loc}^s_p(\kappa_1) . $$
Thus, the conclusion follows.
\end{proof}

\section{Examples}
In this section, we provide three new examples of the main conjecture and the $p$-part of the BSD formula.
The Sage code for an effective computation of Kurihara numbers due to Alexandru Ghitza is available at
\begin{center}
\url{https://github.com/aghitza/kurihara_numbers}.
\end{center}
Although the original code is for good reduction, a slight modification allows us to compute the additive reduction case.
%
%
%
%

\subsection{Elliptic curve of conductor 56144}
Let $p = 11$ and  $E$ be the elliptic curve over $\mathbb{Q}$ defined by
$$y^2=x^3-584551x-172021102$$
with conductor $56144 =  2^4 \cdot 11^2 \cdot 29$ as in \cite[\href{http://www.lmfdb.org/EllipticCurve/Q/56144/w/1}{Elliptic Curve 56144.w1}]{lmfdb}.
The SAGE computation yields the following facts:
\begin{itemize}
\item The mod 11 representation is surjective;
\item The Tamagawa factor of $E$ is prime to $11$;
\item $a_{29}(E) = 1$, so 11 does not divide $28 = 29-1$.
\item $\widetilde{\delta}_{397 \cdot 859} \neq 0$ where 397 and 859 are (the smallest) Kolyvagin primes for $(E, p)$.
\end{itemize}
Thus, Theorem \ref{thm:main_thm_iwasawa} and Theorem \ref{thm:main_thm_bsd} for $(E, p)$ hold.
Note that these theorems do not directly follow from the one divisibility since $\#\sha(E/\mathbb{Q})[11^\infty]$ is 121.
\subsection{Elliptic curve of conductor 84700}
Let $p = 11$ and  $E$ be the elliptic curve over $\mathbb{Q}$ defined by
$$y^2 = x^3 - 235390375x - 1409480751250$$
with conductor $84700 = 2^2 \cdot 5^2 \cdot 7 \cdot 11^2$ as in \cite[\href{http://www.lmfdb.org/EllipticCurve/Q/84700/bi/1}{Elliptic Curve 84700.bi1}]{lmfdb}.
The SAGE computation yields the following facts:
\begin{itemize}
\item The mod 11 representation is surjective;
\item The Tamagawa factor is prime to $11$;
\item $a_{7}(E) = -1$, so 11 does not divide $8 = 7 + 1$;
\item $\widetilde{\delta}_{23 \cdot 2113} \neq 0$ where 23 and 2113 are (the smallest) Kolyvagin primes for $(E, p)$.
\end{itemize}
Thus, Theorem \ref{thm:main_thm_iwasawa} and Theorem \ref{thm:main_thm_bsd} for $(E, p)$ hold.
Note that these results do not directly follow from the one divisibility since $\#\sha(E/\mathbb{Q})[11^\infty]$ is 121.
\subsection{Elliptic curve of conductor 84100}
Let $p = 5$ and  $E$ be the elliptic curve over $\mathbb{Q}$ defined by
$$y^2=x^3+x^2-28033x+1232688$$
with conductor $84100 = 2^2 \cdot 5^2 \cdot 29^2$ as in \cite[\href{http://www.lmfdb.org/EllipticCurve/Q/84100/b/3}{Elliptic Curve 84100.b3}]{lmfdb}.
The SAGE computation yields the following facts:
\begin{itemize}
\item The mod 5 representation is surjective;
\item $a_4 = 28033 \not\equiv 10 \Mod{25}$, so it is not an exceptional case (Assumption \ref{assu:exceptional_cases});
\item The Tamagawa factor is prime to $5$;
\item $\widetilde{\delta}_{191 \cdot 331} \neq 0$ where 191 and 331 are (the smallest) Kolyvagin primes for $(E, p)$.
\end{itemize}
Considering Remark \ref{rem:exceptional_case}.(1), Theorem \ref{thm:main_thm_iwasawa} and Theorem \ref{thm:main_thm_bsd} hold for $(E,p)$.
Note that these results do not directly follow from the one divisibility since the rank of $E$ is two.

\section*{Acknowledgement}
This project grew out from the discussion when we both visit Shanghai Center for Mathematical Sciences in March 2018.
We deeply thank Shanwen Wang and the generous hospitality of Shanghai Center for Mathematical Sciences.
We deeply appreciate Michiel Kosters to allow us to reproduce some material in \cite{kosters-pannekoek} in $\S$\ref{sec:computing-lattice}.
C.K. was partially supported by Basic Science Research Program through the National Research Foundation of Korea (NRF-2018R1C1B6007009).
K.N. was supported by JSPS KAKENHI Grant Number 90595993.

\appendix
\section{The choice of Kato's Euler system} \label{sec:choices}
The goal of this appendix is to give an \emph{explicit} characterization of the ``optimal" Kato's Euler system
$$c_{F} \in \mathrm{H}^1(F, T(-1))$$ where $F$ runs over abelian extensions of $\mathbb{Q}$ such that $$\mathrm{exp}^* (c_{\mathbb{Q}}(1)) = \frac{L^{(Np)}(E,1)}{\Omega^+_E}$$
where $c_{\mathbb{Q}}(1)$ is the image of $c_{\mathbb{Q}}$ in $\mathrm{H}^1(\mathbb{Q}, T)$ under the map induced from the Tate twist as described in \cite[Theorem 12.5]{kato-euler-systems}. This choice of Kato's Euler system is used in many literatures; however, it seems a bit difficult to find the explicit choice from the original construction of Kato in the literatures.
Note that the Euler system in \cite{kato-euler-systems} is constructed for $T(-1)$ due to the cohomological convention of the Galois representation (\hspace{1sp}\cite[$\S$14.10]{kato-euler-systems}).

Following \cite[Corollary 7.2]{rubin-es-mec} and \cite{scholl-kato}, there exists an Euler system $\lbrace c'_F(1) \rbrace_F$ for $T$ such that
$$\mathrm{exp}^*(c'_{\mathbb{Q}} (1)  ) = r_E \cdot \frac{L^{(Np)}(E,1)}{\Omega^+_E}$$
for some positive integer $r_E$.
The goal of this appendix is to show that we can take $r_E = 1$ following \cite[Theorem 6.6]{kato-euler-systems} and \cite[Remark in Appendix A (Page 254)]{delbourgo-book} choosing the ``right" Kato's Euler system.

We recall the convention for Kato's Euler systems. We assume some familiarity with \cite{kato-euler-systems} here. We sometimes may use the notation therein from without any caution.
\subsection{The cohomology classes} \label{subsec:the_cohomology_classes}
Let $f \in S_k(\Gamma_1(N), \psi)$ be a newform with Hecke field $F$ over $\mathbb{Q}$ and $S(f)$ be the rank one Hecke module generated by $f$ over $F$ as in \cite[$\S$6.3]{kato-euler-systems}. Let $\lambda$ be the place of $F$ compatible with a choice of embedding $\iota_p : \overline{\mathbb{Q}} \to \overline{\mathbb{Q}}_p$.
Let $m \geq 1$ be an integer.
\begin{defn} \label{defn:xi_S}
Following \cite[$\S$5.1]{kato-euler-systems}, we define $\xi$ and $S$ as follows:
\begin{enumerate}
\item $\xi$ is a symbol $a(A)$ where $a, A \in \mathbb{Z}$, $A \geq 1$ and $S$ is a non-empty finite set of primes containing $\mathrm{prime}(mA)$, or
\item $\xi$ is an element of $\mathrm{SL}_2(\mathbb{Z})$ and $S$ is a non-empty finite set of primes containing $\mathrm{prime}(mN)$.
\end{enumerate}
\end{defn}
\begin{defn} \label{defn:r_r'_c_d}
Following \cite[(5.2.1) and (5.2.2)]{kato-euler-systems}, we define integers $r, r'$ and positive integers $c, d$ as follows:
\begin{enumerate}
\item $1 \leq r \leq k-1$, $1 \leq r' \leq k-1$, and at least one of $r$, $r'$ is $k-1$.
\item $c$ and $d$ are positive integers with $\mathrm{prime}(cd) \cap S = \emptyset$, and $(d,N) = 1$.
\end{enumerate}
\end{defn}

\begin{defn} \label{defn:zeta_modular_forms}
Let 
$${}_{c,d}z_m(f,r,r', \xi, S) \in S(f) \otimes \mathbb{Q}(\mu_m)$$
be the zeta modular form in \cite[$\S$6.3]{kato-euler-systems}
where $c$, $d$, $r$, $r'$, $\xi$, and $S$ satisfy Definition \ref{defn:xi_S} and Definition \ref{defn:r_r'_c_d}.
\end{defn}

For any $x \in V_{F}(f)$, $x^{\pm} := \frac{1}{2} (1\pm \iota) x$ where $\iota$ is induced from the complex conjugation.
Let  $V_{F}(f)^\pm :=  V_{F}(f)^{\iota = \pm 1}$ and then $\mathrm{dim}_F \ V_{F}(f)^\pm = 1$, respectively.
Let $\mathrm{per}_f : S(f) \to V_{F}(f) \otimes_{F} \mathbb{C}$ be the period map
in \cite[$\S$6.3]{kato-euler-systems}.
Let $f^*$ be the complex conjugation of $f$ (``the dual modular form").
\begin{thm}[{\hspace{1sp}\cite[Theorem 6.6]{kato-euler-systems}}] \label{thm:kato_interpolation_original}
Let $\chi$ be a character on $(\mathbb{Z}/m\mathbb{Z})^\times$.
Let $\pm = (-1)^{k-r-1} \cdot \chi(-1)$ and 
 $$(u,v) := \left\lbrace \begin{array}{ll} (r+2-k, r) & \textrm{if } r'=k-1 \\ (k-r', r') & \textrm{if } r=k-1  \end{array} \right.$$
 following \cite[(4.2.4)]{kato-euler-systems}.
We add the following condition.
\begin{enumerate}
\item[($*$)] If $\xi \in \mathrm{SL}_2(\mathbb{Z})$, then assume $c \equiv d \equiv 1 \Mod{N}$.
\end{enumerate}
Then we have
$$\sum_{b \in (\mathbb{Z}/m\mathbb{Z})^\times} \chi(b) \cdot \mathrm{per}_f \left( \sigma_b \left( {}_{c,d}z_m(f, r, r', \xi, S) \right) \right)^{\pm} = L^{(S)} (f^*, \chi, r) \cdot (2 \pi i )^{k-r-1} \cdot \gamma^{\pm}$$
where
\begin{align*}
\gamma = c^2 d^2 \delta(f, r', a(A)) & - c^u d^2 \overline{\chi}(c) \delta(f, r', ac(A)) \\
& - c^2 d^v \psi(d) \delta(f, r', ``a/d"(A)) + c^u d^v \overline{\chi}(cd) \psi(d)\delta(f, r', ``ac/d"(A))
\end{align*}
if $\xi = a(A)$, and
$$\gamma = (c^2 - c^u \overline{\chi}(c)) (d^2 - d^v \overline{\chi}(d)) \delta(f, r', \xi)$$
if $\xi \in \mathrm{SL}_2(\mathbb{Z})$.
\end{thm}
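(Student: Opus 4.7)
The plan is to unpack the zeta modular form ${}_{c,d}z_m(f,r,r',\xi,S)$ in terms of the Eisenstein series and Siegel units used in Kato's construction (Sections 3--5 of \cite{kato-euler-systems}), apply the period map to turn the pairing with $f$ into a modular-symbol integral, and then recognize that integral as a Mellin transform giving the twisted $L$-value of $f^*$. The whole thing is a Rankin--Selberg-type computation, but with careful attention to the two shapes of $\xi$ and to the $(c,d)$-stabilization.

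First I would recall that, after the identifications of \cite[\S 4]{kato-euler-systems}, ${}_{c,d}z_m(f,r,r',\xi,S)$ is (up to cohomological shifts) the cup product of two Siegel units ${}_c\theta$ and ${}_d\theta$ evaluated against $f$ and a Manin--Drinfeld path encoded by $\xi$. Its image under $\mathrm{per}_f$ can therefore be rewritten, using the standard Shimura integral representation, as an integral of $f(\tau)\tau^{v-1}d\tau$ (times $(2\pi i)^{k-r-1}$) along the cycle attached to $\xi$, pulled back to level $\Gamma_1(Nm)$. Twisting by $\sigma_b$ shifts the cycle by a cusp $b/m$, and summing $\sum_b \chi(b)\,(\cdot)^{\pm}$ converts the combination into a Gauss-sum weighted sum of period integrals over cusps $a/m$ for $a\in(\mathbb{Z}/m\mathbb{Z})^\times$. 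The projector $\frac12(1\pm\iota)$ selects the $\chi(-1)(-1)^{k-r-1}$-parity, which is exactly the stated sign rule; unfolding the Gauss sum leaves precisely the Dirichlet series $L(f^*,\chi,s)|_{s=r}$ (the $f^*$ appearing because the Mellin transform pairs $f$ with the complex-conjugate character data).

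Next I would address the Euler factors and the $\gamma^{\pm}$ factor. Because ${}_c\theta$ and ${}_d\theta$ are the $c$- and $d$-stabilizations of $\theta$, their product satisfies a distribution relation whose effect on the $L$-function is to remove the Euler factors at primes in $S$, giving $L^{(S)}(f^*,\chi,r)$. In the case $\xi\in\mathrm{SL}_2(\mathbb{Z})$ the distribution relation is clean and produces the multiplicative factor $(c^2-c^u\overline{\chi}(c))(d^2-d^v\overline{\chi}(d))\,\delta(f,r',\xi)$; here the condition ($*$) that $c\equiv d\equiv 1\pmod{N}$ is exactly what makes the action of the Hecke operators $T_c$, $T_d$ (equivalently $\sigma_c$, $\sigma_d$ on the level structure) collapse to multiplication by $\overline{\chi}(c)$, $\overline{\chi}(d)$ on the $\chi$-component. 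In the case $\xi=a(A)$ the distribution relation is a four-term identity, coming from expressing ${}_{c,d}\theta$ on the cusps $a$, $ac$, $a/d$, $ac/d$ modulo $A$; this is the source of the four $\delta$-terms in $\gamma$, with the nebentype $\psi(d)$ entering through the diamond operator that appears when one moves the cusp $a/d$ back into the standard fundamental domain for $\Gamma_1(N)$.

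The main obstacle will be the bookkeeping of normalizations: matching Kato's cohomological period map $\mathrm{per}_f$ (which goes through the motive $M(f)$ and its de Rham/Betti comparison) with the classical analytic normalization of $L(f^*,\chi,s)$, and correctly tracking the powers of $2\pi i$, the Gauss sum $\tau(\chi)$, the behavior of $f\mapsto f^*$ under the Atkin--Lehner involution, and the sign conventions for $(\cdot)^{\pm}$. Everything else (the Rankin--Selberg unfolding, the Shintani-type collapse to a Dirichlet series, and the distribution-relation derivation of $\gamma$) is mechanical once these normalizations are fixed; the hard work lies in making sure that the four factors in the $a(A)$ case really combine into the $\gamma$ written above, with the correct characters attached to each term.
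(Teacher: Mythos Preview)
The paper does not contain a proof of this theorem at all: it is stated as a quotation of \cite[Theorem~6.6]{kato-euler-systems} and used as a black box, so there is no ``paper's own proof'' to compare your proposal against. What you have written is a reasonable high-level sketch of the Rankin--Selberg unfolding that Kato carries out in \cite[\S\S5--7]{kato-euler-systems}, and the obstacles you identify (normalizations, Gauss sums, the four-term distribution relation in the $a(A)$ case) are indeed where the work lies; but for the purposes of this paper no such argument is needed or given.
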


\begin{defn} \label{defn:zeta_euler_systems}
Let
$${}_{c,d}z^{(p)}_m(f,r,r', \xi, S) \in \mathrm{H}^1(\mathbb{Z}[1/p, \zeta_m], V_{\mathcal{O}_\lambda}(f)(k-r))$$
be the zeta element in \cite[(8.1.3)]{kato-euler-systems}
where $p \in S$ as well as all the ``variables" satisfy Definition \ref{defn:xi_S} and Definition \ref{defn:r_r'_c_d}.
\end{defn}
By \cite[Proposition 8.12]{kato-euler-systems}, the cohomology classes 
$${}_{c,d}z^{(p)}_m(f,r,r', \xi, S) \in \mathrm{H}^1(\mathbb{Z}[1/p, \zeta_m], V_{\mathcal{O}_\lambda}(f)(k-r))$$ satisfy the Euler system relation except at primes dividing $cdN$. See also \cite[$\S$13.3]{kato-euler-systems}.
By \cite[Theorem 9.7]{kato-euler-systems}, 
under Definition \ref{defn:r_r'_c_d}.(1), we have the following assignment
$${}_{c,d}z^{(p)}_m(f,r,r', \xi, S) \mapsto {}_{c,d}z_m(f,r,r', \xi, S)$$
under the dual exponential map. In other words, zeta modular forms (Definition \ref{defn:zeta_modular_forms}) and zeta elements (Definition \ref{defn:zeta_euler_systems}) match in the \emph{critical} range, i.e. $L$-values at $s =1, \cdots , k-1$.

\subsection{Kato's modification in the $p$-power direction} \label{subsec:kato_modification}
Following \cite[Theorem 12.5 and $\S$13.9]{kato-euler-systems}, we recall Kato's modification of zeta elements (in the $p$-power direction) in order to specify $\mathbf{z}_{\mathrm{Kato}}$ in Conjecture \ref{conj:kato-main-conjecture}.
\subsubsection{The elements}
Let $G_\infty :=\mathrm{Gal}(\mathbb{Q}(\zeta_{p^\infty})/\mathbb{Q})$ and
 $j_{p^n} : \mathrm{Spec}(\mathbb{Q}(\zeta_{p^n})) \to \mathrm{Spec}(\mathbb{Z}[\zeta_{p^n}, 1/p])$ be the natural map.
Then we define a $\mathbb{Z}_p\llbracket G_\infty \rrbracket$-module
$$\mathbf{H}^i(V_{\mathcal{O}_\lambda}(f)) := \varprojlim_{n} \mathrm{H}^i_{\mathrm{\acute{e}t}}( \mathrm{Spec}(\mathbb{Z}[\zeta_{p^n}, 1/p], j_{p^n, *}V_{\mathcal{O}_\lambda}(f)) $$
where $\mathrm{H}^i_{\mathrm{\acute{e}t}}( \mathrm{Spec}(\mathbb{Z}[\zeta_{p^n}, 1/p]), j_{p^n, *}V_{\mathcal{O}_\lambda}(f))$ is the \'{e}tale cohomology group.
\begin{choice} \label{choice:alpha_j}
Fix elements $\alpha_1, \alpha_2 \in \mathrm{SL}_2(\mathbb{Z})$ and integers $j_1, j_2$ such that
\begin{itemize}
\item  $1 \leq j_i \leq k-1$ $(i=1, 2)$, and
\item $\delta(f , j_1, \alpha_1)^+ \neq 0$ and $\delta(f , j_2, \alpha_2)^- \neq 0$ (\hspace{1sp}\cite[(13.6)]{kato-euler-systems}).
\end{itemize}
\end{choice}
Let $\gamma \in V_{F_{\lambda}}(f)$.
Then we have
\begin{equation} \label{eqn:gamma}
\gamma = b_1 \cdot \delta(f , j_1, \alpha_1)^+ + b_2 \cdot \delta(f , j_2, \alpha_2)^- \neq 0
\end{equation}
for some $b_1, b_2 \in \mathbb{Q}_{f,\lambda}$.
\begin{choice} \label{choice:c_and_d}
Fix $c, d \in \mathbb{Z}$ such that
\begin{enumerate}
\item $(cd, 6p) = 1$,
\item $c \equiv d \equiv 1 \Mod{N}$ (needed for \cite[Theorem 6.6]{kato-euler-systems}), and
\item $c^2 \neq 1$ and $d^2 \neq 1$.
\end{enumerate}
\end{choice}
For a commutative ring $R$, let $Q(R)$ be the total quotient ring of $R$.
\begin{defn} \label{defn:kato_modification}
We define
\begin{align*}
\mathbf{z}^{(p)}_{\gamma} := &
\left\lbrace \mu(c,d,j_1)^{-1} \cdot b_1 \cdot \left( {}_{c,d} \mathbf{z}^{(p)}_{p^n}(f,k, j_1, \alpha_1, \mathrm{prime}(pN)) \right)_{n \geq 1} \right\rbrace^{-} \\
& +
\left\lbrace \mu(c,d,j_2)^{-1} \cdot b_2 \cdot \left( {}_{c,d} \mathbf{z}^{(p)}_{p^n}(f,k, j_2, \alpha_2, \mathrm{prime}(pN)) \right)_{n \geq 1} \right\rbrace^{+} 
\in  \mathbf{H}^1(V_{\mathcal{O}_{\lambda}}(f)) \otimes Q(\mathbb{Z}_p\llbracket G_\infty \rrbracket)
\end{align*}
where $$\mu(c,d,j) := (c^2 - c^{k+1-j} \cdot \sigma_c) \cdot (d^2 - d^{j+1} \cdot \sigma_d) \cdot \prod_{\ell \mid N} (1 - \overline{a_\ell(f)} \ell^{-k} \sigma^{-1}_{\ell}) \in \mathbb{Z}_p\llbracket G_\infty \rrbracket $$
for $j \in \mathbb{Z}$ and $\mu(c,d,j)$ is not a zero divisor in $\mathbb{Z}_p\llbracket G_\infty \rrbracket$ for all $j$.
\end{defn}
By \cite[$\S$13.12]{kato-euler-systems}, indeed, it is known that $\mathbf{z}^{(p)}_{\gamma} \in  \mathbf{H}^1(V_{\mathcal{O}_{\lambda}}(f)) \otimes \mathbb{Q}_p$.

\subsubsection{The submodule inverting $p$}
Let $Z(f)$ be the $\mathcal{O}_\lambda\llbracket G_\infty\rrbracket \otimes \mathbb{Q}_p$-submodule of $\mathbf{H}^1(V_{\mathcal{O}_{\lambda}}(f)) \otimes \mathbb{Q}_p$ generated by $\mathbf{z}^{(p)}_{\gamma}$ for all $\gamma \in V_{F_{\lambda}}(f)$.
Since $\mathbf{H}^1(V_{\mathcal{O}_{\lambda}}(f)) \otimes \mathbb{Q}_p$ is free of rank one over $\mathcal{O}_\lambda\llbracket G_\infty \rrbracket \otimes \mathbb{Q}_p$ (by \cite[Theorem 12.4.(2)]{kato-euler-systems}), we easily observe the following independence result considering the evaluation at all the finite order characters on $G_\infty$.
\begin{prop} \label{prop:independence_z_kato}
The cohomology class $\mathbf{z}^{(p)}_{\gamma}$ is independent of the choices of $\alpha_1, \alpha_2, j_1, j_2, c, d$ as above, i.e. Choice \ref{choice:alpha_j} and Choice \ref{choice:c_and_d}.
\end{prop}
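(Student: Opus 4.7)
The plan is to combine the rank-one freeness of $\mathbf{H}^1(V_{\mathcal{O}_\lambda}(f)) \otimes \mathbb{Q}_p$ over $\Lambda_{\mathcal{O}_\lambda} := \mathcal{O}_\lambda\llbracket G_\infty \rrbracket \otimes \mathbb{Q}_p$ (invoked from \cite[Theorem 12.4.(2)]{kato-euler-systems}) with the fact that the intersection of the kernels of all finite-order character specializations of $\Lambda_{\mathcal{O}_\lambda}$ is zero. This reduces the independence of $\mathbf{z}^{(p)}_\gamma$ from the choices of $(\alpha_1, \alpha_2, j_1, j_2, c, d)$ to the equality of its $\chi$-specialization for every finite-order character $\chi$ of $G_\infty$. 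Write $\mathbf{z}, \mathbf{z}'$ for the candidate classes attached to two admissible tuples; it suffices to show $\mathbf{z}(\chi) = \mathbf{z}'(\chi)$ for every such $\chi$.

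For each such $\chi$, the $\chi$-specialization is computed via the dual exponential map after the cyclotomic twist corresponding to $\chi$. By Theorem 9.7 of \cite{kato-euler-systems}, this matches the zeta modular form side, and then Theorem \ref{thm:kato_interpolation_original} evaluates the sign-$\pm$ component of the twisted sum as
\[
L^{(pN)}(f^*, \chi, r) \cdot (2\pi i)^{k-r-1} \cdot (c^2 - c^u \overline{\chi}(c))(d^2 - d^v \overline{\chi}(d)) \cdot \delta(f, j_i, \alpha_i)^{\pm}
\]
for the appropriate exponent $r$ and sign $\pm = (-1)^{k-r-1}\chi(-1)$. Reinstating the bad-prime Euler factors converts $L^{(pN)}$ into $L^{(p)}$ up to the factor $\prod_{\ell \mid N}(1 - \overline{a_\ell(f)} \ell^{-k} \chi(\ell)^{-1})$. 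By construction, the image of $\mu(c,d,j_i)$ under evaluation at $\chi$ is exactly $(c^2 - c^u \overline{\chi}(c))(d^2 - d^v \overline{\chi}(d)) \cdot \prod_{\ell \mid N}(1 - \overline{a_\ell(f)} \ell^{-k} \chi(\ell)^{-1})$, so dividing by $\mu(c,d,j_i)$ (which Choice \ref{choice:c_and_d}.(3) guarantees is a non-zero-divisor) cancels these factors precisely. The residual quantity is $b_i \cdot \delta(f, j_i, \alpha_i)^{\pm}$, which by (\ref{eqn:gamma}) equals the $\pm$-component $\gamma^{\pm}$ and is manifestly independent of $(j_i, \alpha_i, c, d)$. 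Summing the two sign pieces produces a $\chi$-specialization of $\mathbf{z}^{(p)}_\gamma$ that depends only on $\gamma$ and $\chi$, as required.

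The main obstacle I anticipate is the careful bookkeeping of Euler factors: one must verify that the abstract factor $(c^2 - c^{k+1-j_i}\sigma_c)(d^2 - d^{j_i+1}\sigma_d)$ inside $\mu(c,d,j_i)$ specializes under $\chi$ to $(c^2 - c^u \overline{\chi}(c))(d^2 - d^v \overline{\chi}(d))$ with the correct assignment $(u,v) \in \{(k-j_i, j_i), (j_i, k-j_i)\}$ dictated by which of $r, r'$ equals $k-1$ in Theorem \ref{thm:kato_interpolation_original}, and that the bad-prime product in $\mu$ correctly accounts for the Euler factors connecting $L^{(pN)}$ and $L^{(p)}$ after the $\chi$-twist. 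Granting this bookkeeping, the rank-one freeness argument concludes the proof; the fact that $\mathbf{z}^{(p)}_\gamma$ actually lies in $\mathbf{H}^1(V_{\mathcal{O}_\lambda}(f)) \otimes \mathbb{Q}_p$ rather than only in the total quotient module (cited from \cite[$\S$13.12]{kato-euler-systems}) permits the freeness argument to be carried out inside a genuine free module of rank one.
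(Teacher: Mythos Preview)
Your approach is essentially identical to the paper's: the paper's entire argument is the single sentence ``Since $\mathbf{H}^1(V_{\mathcal{O}_{\lambda}}(f)) \otimes \mathbb{Q}_p$ is free of rank one over $\mathcal{O}_\lambda\llbracket G_\infty \rrbracket \otimes \mathbb{Q}_p$ (by \cite[Theorem 12.4.(2)]{kato-euler-systems}), we easily observe the following independence result considering the evaluation at all the finite order characters on $G_\infty$,'' which is precisely the rank-one freeness plus character-evaluation reduction you carry out. You have simply fleshed out the mechanism by which the character evaluations are computed (via \cite[Theorem 9.7]{kato-euler-systems} and Theorem~\ref{thm:kato_interpolation_original}) and why the $(c,d,j_i,\alpha_i)$-dependent factors cancel against $\mu(c,d,j_i)$ and $b_i$; the paper leaves all of this implicit.
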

Furthermore, since $\mathcal{O}_\lambda\llbracket G_\infty \rrbracket \otimes \mathbb{Q}_p$ is a product of PIDs, there exists an element $\gamma_0 \in V_f$ such that $\mathbf{z}^{(p)}_{\gamma_0}$ generates $Z(f)$ over $\mathcal{O}_\lambda \llbracket G_\infty \rrbracket \otimes \mathbb{Q}_p$. 
Thus, any non-zero $\mathbf{z}^{(p)}_{\gamma}$ generates $Z(f)$ over $\mathcal{O}_\lambda\llbracket G_\infty \rrbracket \otimes \mathbb{Q}_p$. 
\subsubsection{The submodule without inverting $p$}
We assume the following condition and it is satisfied in most cases as described in \cite[Remark 12.8]{kato-euler-systems}.
\begin{assu} \label{assu:big_image}
Suppose that there exists a $\mathcal{O}_{\lambda}$-lattice $T_f \subseteq V_{F_\lambda}(f)$ such that the image of
$\rho_{T_f} : \mathrm{Gal}(\overline{\mathbb{Q}}/\mathbb{Q}(\mu_{p^\infty})) \to \mathrm{GL}_{\mathcal{O}_{\lambda}}(T_f)$ contains $\mathrm{SL}_2(\mathbb{Z}_p)$.
\end{assu}
Let $Z(f,T_f)$ be the $\mathbb{Z}\llbracket G_\infty \rrbracket$-submodule of $\mathbf{H}^1(T_f) \otimes \mathbb{Q}_p = \mathbf{H}^1(V_{\mathcal{O}_{\lambda}}(f)) \otimes \mathbb{Q}_p$ generated by $\mathbf{z}^{(p)}_{\gamma}$ for all $\gamma \in T_f \subseteq V_{F_{\lambda}}(f)$.
Then by \cite[Theorem 12.5.(4)]{kato-euler-systems}, it is known that $Z(f, T_f) \subseteq \mathbf{H}^1(T_f)$ under Assumption \ref{assu:working_assumptions}.(2).

Suppose that $T_f = V_{\mathcal{O}_{\lambda}}(f)$. Let $Z$ be the $\mathbb{Z}_p\llbracket G_\infty \rrbracket$-submodule of $Z(f, V_{\mathcal{O}_{\lambda}}(f))$ generated by
$${}_{c,d} \mathbf{z}^{(p)}_{p^n}(f,k, j, a(A), \mathrm{prime}(pA))_{n \geq 1} \in \mathbf{H}^1( V_{\mathcal{O}_{\lambda}}(f) )$$
where $1 \leq j \leq k-1$, $a, A \in \mathbb{Z}$ with $A \geq 1$, $c, d \in \mathbb{Z}$ such that $(c, 6pA)= (d, 6pN) =1$, and
$${}_{c,d} \mathbf{z}^{(p)}_{p^n}(f,k, j, \alpha, \mathrm{prime}(pN))_{n \geq 1}  \in \mathbf{H}^1( V_{\mathcal{O}_{\lambda}}(f) ) $$
where $1 \leq j \leq k-1$, $\alpha \in \mathrm{SL}_2(\mathbb{Z})$, $c, d \in \mathbb{Z}$ such that $(c, 6pA)= (d, 6pN) =1$.
Then $Z \subseteq Z( f ,V_{\mathcal{O}_{\lambda}}(f) )$ of finite index due to \cite[Theorem 12.6]{kato-euler-systems}.
\begin{rem} \label{rem:comparing_lattices}
Indeed, Assumption \ref{assu:big_image} is satisfied for all lattices if it is satisfied for one lattice for non-CM elliptic curves. See \cite[Remark 12.8]{kato-euler-systems}.
\end{rem}
\begin{prop}
Under Assumption \ref{assu:big_image}, 
the submodule $Z(f,T_f) \subseteq \mathbf{H}^1(T_f)$ is generated by one element over $\mathcal{O}_\lambda\llbracket G_\infty \rrbracket$.
\end{prop}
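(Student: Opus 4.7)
The plan is to reduce the cyclicity of $Z(f,T_f)$ to a sign-by-sign analysis using the orthogonal idempotents $e_\pm := (1 \pm c)/2 \in \Lambda_\infty := \mathcal{O}_\lambda\llbracket G_\infty\rrbracket$ attached to complex conjugation $c \in G_\infty$, which exist because $p$ is odd. These induce compatible decompositions $\Lambda_\infty = \Lambda_\infty^+ \oplus \Lambda_\infty^-$, $\mathbf{H}^1(T_f) = \mathbf{H}^1(T_f)^+ \oplus \mathbf{H}^1(T_f)^-$, and $T_f = T_f^+ \oplus T_f^-$, with each $T_f^{\pm}$ free of rank one over $\mathcal{O}_\lambda$ because $\dim_{F_\lambda}V_{F_\lambda}(f)^{\pm}=1$ and $\mathcal{O}_\lambda$ is a discrete valuation ring.

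The first substantive step is to verify two linearity properties of the assignment $\gamma \mapsto \mathbf{z}^{(p)}_\gamma$: namely, $F_\lambda$-linearity in $\gamma$, and sign-reversal in the sense that $\gamma \in V_{F_\lambda}(f)^{\pm}$ forces $\mathbf{z}^{(p)}_\gamma \in \mathbf{H}^1(T_f)^{\mp} \otimes \mathbb{Q}_p$. Both follow directly from Definition \ref{defn:kato_modification} together with the decomposition (\ref{eqn:gamma}): the coefficients $(b_1,b_2)$ are $F_\lambda$-linear in $\gamma$, and by construction the $b_1$-contribution is projected onto the minus eigenspace while the $b_2$-contribution is projected onto the plus eigenspace. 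Proposition \ref{prop:independence_z_kato} guarantees that $\mathbf{z}^{(p)}_\gamma$ depends only on $\gamma$ and not on the auxiliary data $\alpha_i, j_i, c, d$, so this assignment is well-defined.

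With these properties in hand, choose $\mathcal{O}_\lambda$-generators $\gamma_0^\pm$ of $T_f^\pm$ and set $\gamma_0 := \gamma_0^+ + \gamma_0^- \in T_f$. For any $\gamma = a^+\gamma_0^+ + a^-\gamma_0^- \in T_f$, linearity gives $\mathbf{z}^{(p)}_\gamma = a^+\mathbf{z}^{(p)}_{\gamma_0^+} + a^-\mathbf{z}^{(p)}_{\gamma_0^-}$, so
$$e_- Z(f,T_f) = \Lambda_\infty\cdot \mathbf{z}^{(p)}_{\gamma_0^+}, \qquad e_+ Z(f,T_f) = \Lambda_\infty\cdot \mathbf{z}^{(p)}_{\gamma_0^-}.$$
Since $\mathbf{z}^{(p)}_{\gamma_0^+}$ and $\mathbf{z}^{(p)}_{\gamma_0^-}$ lie in opposite eigenspaces, the identities $e_\pm\mathbf{z}^{(p)}_{\gamma_0} = \mathbf{z}^{(p)}_{\gamma_0^\mp}$ recover each individual generator from $\mathbf{z}^{(p)}_{\gamma_0}$, whence $\Lambda_\infty\cdot \mathbf{z}^{(p)}_{\gamma_0} = e_+Z(f,T_f)\oplus e_-Z(f,T_f) = Z(f,T_f)$. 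Integrality of this generator inside $\mathbf{H}^1(T_f)$ is automatic from the containment $Z(f,T_f)\subseteq \mathbf{H}^1(T_f)$ already recorded under Assumption \ref{assu:big_image}. The only delicate point in the whole argument is the linearity verification, after which everything reduces to formal manipulation with the idempotents $e_\pm$.
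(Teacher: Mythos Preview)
Your argument is correct and follows essentially the same line as the paper's proof. The paper packages the idempotent decomposition slightly differently, by writing down a $\Lambda_\infty$-linear map
\[
\beta: V_{\mathcal{O}_\lambda}(f)^+ \otimes_{\mathcal{O}_\lambda} \Lambda_\infty^- \ \oplus\ V_{\mathcal{O}_\lambda}(f)^- \otimes_{\mathcal{O}_\lambda} \Lambda_\infty^+ \longrightarrow \mathbf{H}^1(V_{\mathcal{O}_\lambda}(f))
\]
whose domain is visibly cyclic over $\Lambda_\infty$ and whose image equals $Z(f,T_f)$; the sign-reversal you read off directly from Definition~\ref{defn:kato_modification} is instead justified there via the identity $\mathbf{z}^{(p)}_{\iota(\gamma)} = -\sigma_{-1}\bigl(\mathbf{z}^{(p)}_\gamma\bigr)$, but this is the same content expressed differently.
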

\begin{proof}
By Remark \ref{rem:comparing_lattices}, we may assume $V_{\mathcal{O}_\lambda}(f) = T_f$.
Let $\alpha : V_{\mathcal{O}_\lambda}(f) \to \mathbf{H}^1(V_{\mathcal{O}_\lambda}(f))$ be the map defined by $\gamma \mapsto \mathbf{z}^{(p)}_{\gamma}$ as in \cite[Theorem 12.5.(1)]{kato-euler-systems}.
Consider the $\mathcal{O}_\lambda\llbracket G_\infty \rrbracket$-linear map
$$\beta :
V_{\mathcal{O}_\lambda}(f)^+ \otimes_{\mathcal{O}_\lambda} \mathcal{O}_\lambda\llbracket G_\infty \rrbracket^- \oplus
V_{\mathcal{O}_\lambda}(f)^- \otimes_{\mathcal{O}_\lambda} \mathcal{O}_\lambda\llbracket G_\infty \rrbracket^+ \to \mathbf{H}^1(V_{\mathcal{O}_\lambda}(f))$$
defined by
$\gamma^+ \otimes \lambda_1 +
\gamma^- \otimes \lambda_2 \mapsto 
 \lambda_1 \cdot \mathbf{z}^{(p)}_{\gamma^+}+
 \lambda_2 \cdot \mathbf{z}^{(p)}_{\gamma^-}$
and then it satisfies $\mathcal{O}_\lambda\llbracket G_\infty \rrbracket \cdot \mathrm{Im}(\alpha) = \mathrm{Im}(\beta)$.
The map $\beta$ is well-defined due to the equation
 $$\mathbf{z}^{(p)}_{\iota(\gamma)} = - \sigma_{-1} \left( \mathbf{z}^{(p)}_{\gamma} \right) .$$
Since 
$V_{\mathcal{O}_\lambda}(f)^+ \otimes_{\mathcal{O}_\lambda} \mathcal{O}_\lambda\llbracket G_\infty \rrbracket^-$ and
$V_{\mathcal{O}_\lambda}(f)^- \otimes_{\mathcal{O}_\lambda} \mathcal{O}_\lambda\llbracket G_\infty \rrbracket^+$ 
are free of rank one over
$\mathcal{O}_\lambda\llbracket G_\infty \rrbracket^-$ and $\mathcal{O}_\lambda\llbracket G_\infty \rrbracket^+$,
respectively, $\mathrm{Im}(\beta)$ is free or rank one over $ \mathcal{O}_\lambda\llbracket G_\infty \rrbracket$. 
Thus, $\mathcal{O}_\lambda\llbracket G_\infty \rrbracket \cdot \mathrm{Im}(\alpha)$ is also free or rank one over $ \mathcal{O}_\lambda\llbracket G_\infty \rrbracket$; hence, the conclusion holds.
\end{proof}

\begin{defn}[Kato's zeta element for elliptic curves] \label{defn:z_kato}
Assume that $f$ corresponds to an elliptic curve over $\mathbb{Q}$ and $T$ be its $p$-adic Tate module.
Let 
$\mathbf{z}^{(p)}_{\gamma_0}$ be a generator of $Z(f, V_{\mathcal{O}_\lambda}(f)) \subseteq \mathbf{H}^1(V_{\mathcal{O}_\lambda}(f))$ over $ \mathcal{O}_\lambda\llbracket G_\infty \rrbracket$. Then we define $\mathbf{z}_{\mathrm{Kato}} \in \mathbb{H}^1(T)$ in Conjecture \ref{conj:kato-main-conjecture}  by
the image of $\mathbf{z}^{(p)}_{\gamma_0}$ under the map
\[
\xymatrix{
\mathbf{H}^1(V_{\mathcal{O}_\lambda}(f)) \ar[r]^-{\otimes (\zeta_{p^n})_n} & \mathbf{H}^1(V_{\mathcal{O}_\lambda}(f)(1)) \ar[r]^-{\mathrm{cores}} & \mathbb{H}^1(T) 
}
\]
where the corestriction map is from $\mathbb{Q}(\mu_{p^\infty})$ to $\mathbb{Q}_{\infty}$.
\end{defn}
\begin{rem}
Note that $\gamma^{\pm}_0$ corresponds to the N\`{e}ron periods of the elliptic curve since $T$ comes from the $p$-adic Tate module of $E$.
\end{rem}
\subsection{The Euler system: an extension to the tame direction} \label{subsec:extension_of_euler_systems}
The goal of this section is to extend the interpolation property of Kato's modification $\mathbf{z}^{(p)}_\gamma$ obtained from 
\begin{quote}
$\left( {}_{c,d} \mathbf{z}^{(p)}_{p^n}(f,k, j_1, \alpha_1, \mathrm{prime}(pN)) \right)_{n \geq 1}$ and
$\left( {}_{c,d} \mathbf{z}^{(p)}_{p^n}(f,k, j_2, \alpha_2, \mathrm{prime}(pN)) \right)_{n \geq 1}$
\end{quote}
to the tame direction \emph{very slightly}.
The motivation of the extension is the following question.
\begin{ques} \label{ques:ideal_euler_systems}
Do we have an \emph{integral} Euler system for the triple $( V_{\mathcal{O}_{\lambda}}(f)(k-r), F_\lambda, \mathrm{prime}(cdpN) )$ modifying ${}_{c,d}z^{(p)}_m(f,r,j, \xi, \mathrm{prime}(mN)) \in \mathrm{H}^1(\mathbb{Z}[1/p, \zeta_m], V_{\mathcal{O}_{\lambda}}(f)(k-r))$ such that
\begin{enumerate}
\item it is \emph{independent} of the choices of $(c, d)$ and $(j, \xi)$ (Proposition \ref{prop:independence_z_kato}) and
\item it satisfies the interpolation property for \emph{all finite order characters on $\mathrm{Gal}(\mathbb{Q}^{\mathrm{ab}}/\mathbb{Q})$} (Theorem \ref{thm:kato_interpolation_original})?
\end{enumerate}
\end{ques}
We do not have the answer to this question yet. In \cite[Corollary 7.2]{rubin-es-mec}, when $k = 2$, $r=1$ and $j_1=j_2=1$, Rubin could modify the Euler system 
$${}_{c,d}z^{(p)}_m(f,1,1, \xi, \mathrm{prime}(mpN))$$
 by choosing $c$ and $d$ more carefully, i.e. $c \not\equiv 1 \Mod{p}$ and $d \not\equiv 1 \Mod{p}$ in order to have 
 $$c \cdot d \cdot (c - \sigma_c) \cdot (d - \sigma_d) \in \Lambda^\times .$$ Thus, Rubin's modification yields both the integrality and the interpolation property for finite order characters on $\mathrm{Gal}(\mathbb{Q}_{\infty}/\mathbb{Q})$.
We extend Rubin's strategy to the case we need for Theorem \ref{thm:kato_interpolation} using the following lemma.
%

\begin{lem} \label{lem:invert_c_d}
Assume that
\begin{enumerate} 
\item[$(**)$]  $j = k-1$, $k<p$, and $r \neq 2$.
\end{enumerate}
Then there exist infinitely many integers $c$ and $d$ such that
$$(c^2 - c^{r+1-j} \cdot \sigma_c) \cdot (d^2 - d^{j+1 + r - k} \cdot \sigma_d ) \in \mathbb{Z}_p[\mathrm{Gal}(F/\mathbb{Q})]^\times .$$
where $F$ is a cyclic extension of $\mathbb{Q}$ of $p$-power degree ramified only at a prime $\ell$ with $\ell \equiv 1 \Mod{p}$ and $(\ell, cdp)=1$.
\end{lem}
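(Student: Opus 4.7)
The plan is to reduce the unitship question to a mod-$p$ augmentation calculation, slightly extending Rubin's strategy recalled just above the statement. Since $F/\mathbb{Q}$ is cyclic of $p$-power degree, $G := \mathrm{Gal}(F/\mathbb{Q})$ is a finite abelian $p$-group, so $\mathbb{F}_p[G] \cong \mathbb{F}_p[X]/(X-1)^{|G|}$ (using $X^{p^n}-1 = (X-1)^{p^n}$ in characteristic $p$) is a local ring with residue field $\mathbb{F}_p$; lifting, $\mathbb{Z}_p[G]$ is also local, with maximal ideal the kernel of the composition $\overline{\epsilon} \colon \mathbb{Z}_p[G] \xrightarrow{\epsilon} \mathbb{Z}_p \twoheadrightarrow \mathbb{F}_p$ of augmentation and reduction mod $p$. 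Consequently an element of $\mathbb{Z}_p[G]$ is a unit if and only if its image under $\overline{\epsilon}$ is nonzero, and in particular the product in the lemma is a unit if and only if each of its two factors is.

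Specializing $j = k-1$ turns the exponents into $r+1-j = r-k+2$ and $j+1+r-k = r$, so
\begin{align*}
\overline{\epsilon}\bigl(c^2 - c^{r-k+2}\sigma_c\bigr) &\equiv c^{r-k+2}(c^{k-r} - 1) \pmod{p}, \\
\overline{\epsilon}\bigl(d^2 - d^{r}\sigma_d\bigr) &\equiv d^{2}(1 - d^{r-2}) \pmod{p}.
\end{align*}
Requiring $(cd, p\ell) = 1$ places $c, d$ in $(\mathbb{Z}/p\mathbb{Z})^{\times}$ (and keeps them $p$-adically invertible so that $c^{r-k+2}$ and $d^{r-2}$ are well defined), and it then suffices to impose $c^{k-r} \not\equiv 1 \pmod{p}$ and $d^{r-2} \not\equiv 1 \pmod{p}$. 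Since $r \leq k-1$, one has $1 \leq k-r \leq k-1$; since $k < p$, this forces $k - r < p - 1$, so the $(k-r)$-th power map on $\mathbb{F}_p^{\times}$ is a non-trivial character with proper kernel. Likewise $r \neq 2$ gives $r - 2 \neq 0$ with $|r-2| \leq k - 1 < p - 1$, so the $(r-2)$-th power map on $\mathbb{F}_p^{\times}$ is also non-trivial. Hence many residue classes modulo $p$ work for each of $c$ and $d$; within each such class one may further shift by multiples of $p\ell$ to enforce coprimality to $\ell$ as well, producing infinitely many valid integer pairs $(c, d)$.

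The main obstacle is conceptual rather than computational: recognizing that for a finite abelian $p$-group the unit group of $\mathbb{Z}_p[G]$ is detected by the single $\mathbb{F}_p$-valued map $\overline{\epsilon}$. Once this is granted, the three hypotheses $j = k-1$, $k < p$, and $r \neq 2$ have transparent roles (respectively: producing the clean factorizations above; keeping the exponents below $p-1$ so the associated characters of $\mathbb{F}_p^{\times}$ are non-trivial; and avoiding the zero-exponent collapse of the $d$-factor), so the remainder of the argument is elementary.
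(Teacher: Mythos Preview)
Your argument is correct and is essentially the same as the paper's: both reduce to showing $c^{k-r}\not\equiv 1\pmod p$ and $d^{2-r}\not\equiv 1\pmod p$, using that $\mathbb{Z}_p[G]$ is local with residue field $\mathbb{F}_p$ for a finite $p$-group $G$. The paper presents this by writing each factor as $c^{r-k+2}\bigl((c^{k-r}-1)+(1-\sigma_c)\bigr)$ and $d^{r}\bigl((d^{2-r}-1)+(1-\sigma_d)\bigr)$ and then simply taking $c,d$ to be primitive roots mod $p$; you instead make the local-ring structure explicit via the map $\overline{\epsilon}$ and allow any residue class outside the relevant power-kernel. These are the same idea in slightly different packaging, with your version spelling out what the paper leaves implicit.
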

\begin{proof}
Since
\begin{align*}
& (c^2 - c^{r+1-j} \cdot \sigma_c) \cdot (d^2 - d^{j+1 + r - k} \cdot \sigma_d ) \\
 = & c^{r+1-j} \cdot (c^{1-r+j} -1 + 1 - \sigma_c ) \cdot d^{j+1+r-k} \cdot (d^{1-j-r+k} -1 + 1 - \sigma_d) \\
 = & c^{r+1-k+1} \cdot (c^{1-r+k-1} -1 + 1 - \sigma_c ) \cdot d^{k-1+1+r-k} \cdot (d^{1-k+1-r+k} -1 + 1 - \sigma_d) \\
 = & c^{r-k+2} \cdot (c^{k-r} -1 + 1 - \sigma_c ) \cdot d^{r} \cdot (d^{2-r} -1 + 1 - \sigma_d),
\end{align*}
under Assumption $(**)$, it suffices to choose $c$ and $d$ such that their reductions mod $p$ are primitive roots mod $p$. 
\end{proof}
\begin{rem} \label{rem:invert_c_d}
Assumption $(**)$ is already observed in \cite[(5.2.3)]{kato-euler-systems}. If $k=2$, Assumption $(**)$ in Lemma \ref{lem:invert_c_d} is automatic. 
\end{rem}
Following \cite[$\S$13.3]{kato-euler-systems}, we consider the following Euler system 
$${}_{c,d}z^{(p)}_m(f,r, k-1, \xi, \mathrm{prime}(mpN)) $$
in $\mathrm{H}^1(\mathbb{Z}[1/p, \zeta_m], V_{\mathcal{O}_{\lambda}}(f)(k-r))$
for the triple $( V_{\mathcal{O}_{\lambda}}(f)(k-r), F_\lambda, \mathrm{prime}(cdpN) )$
where $\xi \in \mathrm{SL}_2(\mathbb{Z})$.

\begin{rem} $ $
\begin{enumerate}
\item
Note that we have the following equality at the level of Iwasawa cohomology
\begin{align*}
& \left( {}_{c,d}\mathbf{z}^{(p)}_{mp^n}(f,r, k-1, \xi, \mathrm{prime}(mpN)) \right)_n \\
= & \left( {}_{c,d}\mathbf{z}^{(p)}_{mp^n}(f,k,k-1, \xi, \mathrm{prime}(mpN)) \right)_n \otimes \left( \zeta^{\otimes k-r}_{p^n} \right)_n
\end{align*}
in $\varprojlim_n \mathrm{H}^1(\mathbb{Z}[1/p, \zeta_{mp^n}], V_{\mathcal{O}_{\lambda}}(f)(k-r))$.
\item
Since we do not expect
$$ \displaystyle \prod_{\ell \mid N} (1 - \overline{a_\ell(f)} \ell^{-r} \sigma^{-1}_{\ell}) $$
is invertible in general, we do not invert these Euler factors at bad primes.
\end{enumerate}
\end{rem}

Under Assumption $(**)$ in Lemma \ref{lem:invert_c_d},
let
$\nu(c,d,k-1, k-r) $
be \emph{any} element of $\mathbb{Z}_p\llbracket \mathrm{Gal}(\mathbb{Q}^{\mathrm{ab}}/\mathbb{Q}) \rrbracket$  which restricts to
$$ \left( (c^2 - c^{r+1-j} \cdot \sigma_c) \cdot (d^2 - d^{j+1 + r - k} \cdot \sigma_d ) \right)^{-1}  \in \mathbb{Z}_p\llbracket \mathrm{Gal}(\widetilde{F}/\mathbb{Q}) \rrbracket $$
where $\widetilde{F}$ is the compositum of all cyclic extensions of $\mathbb{Q}$ of $p$-power degree ramified only at a prime $\ell$ with $(\ell, pcdN)=1$ and $\mathbb{Q}_\infty$.
Then we define
\begin{align*}
z^{(Np)}_{\gamma, m}(k-r) := &
\left\lbrace \nu(c,d,k-1, k-r) \cdot b_1 \cdot  {}_{c,d} z^{(p)}_{m}(f,r, k-1, \alpha_1, \mathrm{prime}(pNm))  \right\rbrace^{-} \\
& +
\left\lbrace \nu(c,d,k-1, k-r) \cdot b_2 \cdot  {}_{c,d} z^{(p)}_{m}(f,r, k-1, \alpha_2, \mathrm{prime}(pNm))  \right\rbrace^{+} 
\end{align*}
in $\mathrm{H}^1(\mathbb{Z}[1/p, \zeta_{m}], V_{\mathcal{O}_{\lambda}}(f)(k-r))$ and then $z^{(Np)}_{\gamma, m}(k-r)$ obviously forms an integral Euler system and interpolates $Np$-imprimitive $L$-values of $f$ at $s=r$ twisted by finite order characters on $\mathrm{Gal}(\widetilde{F}/\mathbb{Q}) $.
Since the choice of $\nu(c,d,k-1, k-r)$ is arbitrary, we do not know the precise interpolation formula for general finite order characters on $\mathrm{Gal}(\mathbb{Q}^{\mathrm{ab}}/\mathbb{Q})$.

\subsection{Optimization of periods}
The goal here is to make a \emph{good} choice of $\gamma$ in $z^{(Np)}_{\gamma, m}(k-r)$. 

We recall that the period map appeared in $\S$\ref{subsec:the_cohomology_classes} induces the Shimura isomorphism
\begin{equation} \label{eqn:shimura_isom}
\mathrm{per}_f: \left( S(f) \otimes \mathbb{C} \right)^2 \simeq V_F(f) \otimes \mathbb{C}
\end{equation}
defined by $(f, f) \mapsto \mathrm{per}_f(f) + \iota' \mathrm{per}_f(f)$
where $\iota' (x \otimes y ) = x \otimes \overline{y}$ for $x \in V_F(f)$, $y \in \mathbb{C}$. See \cite[(7.13.2)]{kato-euler-systems} for detail.

\subsubsection{N{\'{e}}ron periods}
Assume that $f$ corresponds to an elliptic curve $E$ over $\mathbb{Q}$.
Let $\omega_f \in S(f)$ be the element corresponding to the N\'{e}ron differential $\omega_E$ of $E$ under the modular parametrization.
Then we have
$$\mathrm{per}_f (\omega_f) = \Omega^+_E \cdot \gamma^+ + \Omega^-_E \cdot \gamma^- $$
for some non-zero $\gamma^\pm \in V_{\mathbb{Z}}(f)$.
Here, $\gamma^\pm \in V_{\mathbb{Z}}(f) \subseteq T(-1)$ is due to
Theorem \ref{thm:integrality_modular_symbols} and \cite[Theorem 13.6]{kato-euler-systems} (c.f. \cite{ash-stevens}).
\begin{choice} \label{choice:gamma}
We choose $b_1, b_2 \in \mathbb{Q} (= \mathbb{Q}_{f,\lambda})$ such that
\[
\xymatrix{
\gamma^+ = b_1 \cdot \delta(f , 1, \alpha_1)^+ , & \gamma^- =   b_2 \cdot \delta(f , 1, \alpha_2)^- 
}
\]
in Equation (\ref{eqn:gamma}).
\end{choice}

\begin{defn} \label{defn:the_defn_ES}
Due to Lemma \ref{lem:invert_c_d} with Remark \ref{rem:invert_c_d}, the Euler system in $\S$\ref{subsec:euler_systems_modular_symbols} can be chosen by
 $$c_{\mathbb{Q}(\mu_m)}(1) := z^{(Np)}_{\gamma, m}(1)$$
where $f$ is the modular form corresponding to $E$, $k = 2$, $j_1 = j_2 = 1$, and $\alpha_1$, $\alpha_2$ and 
$\gamma^{\pm}$ follows Choice \ref{choice:gamma}.
\end{defn}
We also consider the Iwasawa cohomology version. Let
\begin{align*}
\mathbf{z}^{(Np)}_{\gamma} := &
 \left\lbrace \nu(c,d,1,1) \cdot b_1 \cdot \left( {}_{c,d} \mathbf{z}^{(Np)}_{p^n}(f,1, 1, \alpha_1, \mathrm{prime}(pNm)) \right)_{n \geq 1} \right\rbrace^{-}  \\
& +
 \left\lbrace \nu(c,d,1,1) \cdot b_2 \cdot \left( {}_{c,d} \mathbf{z}^{(Np)}_{p^n}(f,1, 1, \alpha_2, \mathrm{prime}(pNm)) \right)_{n \geq 1} \right\rbrace^{+}  .
\end{align*}
Then we define an $N$-imprimitive analogue of $\mathbf{z}_{\mathrm{Kato}}$ (Definition \ref{defn:z_kato}) for elliptic curves by
\begin{equation} \label{eqn:imprimitive_z_kato}
\mathbf{z}^{(N)}_{\mathrm{Kato}} := \mathrm{cores}_{\mathbb{Q}(\mu_{p^\infty})/\mathbb{Q}_\infty}\left( \mathbf{z}^{(Np)}_{\gamma_0} (1) \right) .
\end{equation}
\begin{lem} \label{lem:comparing_main_conj_imprimitive}
Under Assumption \ref{assu:working_assumptions}.(1), we have
$$\mathrm{char}_{\Lambda} \left( \dfrac{ \mathbb{H}^1(T) }{ \Lambda \mathbf{z}^{(N)}_{\mathrm{Kato}} } \right)
=
\mathrm{char}_{\Lambda} \left( \dfrac{ \mathbb{H}^1(T) }{ \Lambda \mathbf{z}_{\mathrm{Kato}} } \right) .$$
\end{lem}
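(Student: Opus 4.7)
The strategy is to show that $\mathbf{z}^{(N)}_{\mathrm{Kato}}$ and $\mathbf{z}_{\mathrm{Kato}}$ differ by a unit in $\Lambda$, which immediately gives equality of the two cyclic quotient $\Lambda$-modules $\mathbb{H}^1(T)/\Lambda\mathbf{z}^{(N)}_{\mathrm{Kato}}$ and $\mathbb{H}^1(T)/\Lambda\mathbf{z}_{\mathrm{Kato}}$, and hence of their characteristic ideals. The only input required is Assumption \ref{assu:working_assumptions}.(1), exactly as in the analogous passage from $L^{(p)}$ to $L^{(Np)}$ in the interpolation formula of Theorem \ref{thm:kato_interpolation}.

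First I would unwind the two normalizations. In Definition \ref{defn:kato_modification}, the denominator $\mu(c,d,1)$ for $k=2$ contains the bad-prime product $\prod_{\ell \mid N}(1 - \overline{a_\ell(f)}\,\ell^{-2}\sigma_\ell^{-1})$, whereas the $N$-imprimitive construction in $\S$\ref{subsec:extension_of_euler_systems} uses $\nu(c,d,1,1)$, which omits these factors. After tensoring with $(\zeta_{p^n})_n$ (the Tate twist, which on $V(1)$ replaces the action of $\sigma_\ell^{-1}$ by $\ell^{-1}\sigma_\ell^{-1}$, shifting the exponent $-k$ to $-k+1=-1$) and corestricting from $\mathbb{Q}(\mu_{p^\infty})$ to $\mathbb{Q}_\infty$, one obtains the identity
\begin{equation*}
\mathbf{z}^{(N)}_{\mathrm{Kato}} = \left(\prod_{\ell \mid N_{\mathrm{st}}} (1 - \ell^{-1}\sigma_\ell^{-1}) \cdot \prod_{\ell \mid N_{\mathrm{ns}}} (1 + \ell^{-1}\sigma_\ell^{-1})\right) \cdot \mathbf{z}_{\mathrm{Kato}}
\end{equation*}
in $\mathbb{H}^1(T)$; the factors for additive reduction primes $\ell \mid N$ (including $\ell = p$) drop out because $a_\ell(f) = 0$. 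This matches the Euler factors $(1 - q^{-1}\chi(q))$ and $(1 + q^{-1}\chi(q))$ in Theorem \ref{thm:kato_interpolation}, which convert $L^{(p)}(E,\chi,1)$ into $L^{(Np)}(E,\chi,1)$, providing a clean sanity check.

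Next I would verify that each factor in the product is a unit in $\Lambda$. The ring $\Lambda$ is local with maximal ideal $\mathfrak{m}$ and residue field $\mathbb{F}_p$, and the reduction $\Lambda \twoheadrightarrow \Lambda/\mathfrak{m} = \mathbb{F}_p$ sends every group-like element to $1$. Hence the image of $1 \mp \ell^{-1}\sigma_\ell^{-1}$ is $1 \mp \ell^{-1} = (\ell \mp 1)/\ell \in \mathbb{F}_p$. Assumption \ref{assu:working_assumptions}.(1) asserts $p \nmid \ell - 1$ for $\ell \mid N_{\mathrm{st}}$ and $p \nmid \ell + 1$ for $\ell \mid N_{\mathrm{ns}}$, so each image lies in $\mathbb{F}_p^{\times}$, and each factor is a unit in $\Lambda$.

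Multiplication by a unit preserves the cyclic $\Lambda$-submodule generated, so $\Lambda \cdot \mathbf{z}^{(N)}_{\mathrm{Kato}} = \Lambda \cdot \mathbf{z}_{\mathrm{Kato}}$, and the two quotients are literally equal as $\Lambda$-modules; a fortiori they have the same characteristic ideal. The main obstacle is the first step: extracting the precise identification between $\mathbf{z}^{(N)}_{\mathrm{Kato}}$ and $\mathbf{z}_{\mathrm{Kato}}$ from the two constructions in $\S$\ref{subsec:kato_modification} and $\S$\ref{subsec:extension_of_euler_systems}, in particular tracking the bad Euler factors through the Tate twist and the corestriction to $\mathbb{Q}_\infty$. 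Once that comparison is pinned down, the unit check and the final conclusion are immediate from Assumption \ref{assu:working_assumptions}.(1).
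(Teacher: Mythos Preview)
Your proposal is correct and follows essentially the same approach as the paper: the paper's proof simply states the identity $\mathbf{z}^{(N)}_{\mathrm{Kato}} = \prod_{\ell \mid N_{\mathrm{st}}} (1 - \ell^{-1}\sigma_\ell^{-1}) \cdot \prod_{\ell \mid N_{\mathrm{ns}}} (1 + \ell^{-1}\sigma_\ell^{-1}) \cdot \mathbf{z}_{\mathrm{Kato}}$ ``by definition'' and then observes that this product is a unit in $\Lambda$ under Assumption~\ref{assu:working_assumptions}.(1). Your version supplies the extra justification (tracking the Euler factors through the Tate twist and the local-ring unit check) that the paper leaves implicit.
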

\begin{proof}
By definition, we have
$$\mathbf{z}^{(N)}_{\mathrm{Kato}} = 
\prod_{\ell \mid N_{\mathrm{st}}} (1 - \ell^{-1} \sigma^{-1}_\ell) \cdot \prod_{\ell \mid N_{\mathrm{ns}}} (1 + \ell^{-1} \sigma^{-1}_\ell) \cdot
\mathbf{z}_{\mathrm{Kato}} 
 ,$$
and, indeed,
$$\prod_{\ell \mid N_{\mathrm{st}}} (1 - \ell^{-1} \sigma^{-1}_\ell) \cdot \prod_{\ell \mid N_{\mathrm{ns}}} (1 + \ell^{-1} \sigma^{-1}_\ell) $$ is invertible in $\Lambda$ under Assumption \ref{assu:working_assumptions}.(1). 
\end{proof}

\subsubsection{Canonical periods}
We fix an isomorphism $\iota : \overline{\mathbb{Q}}_p \simeq \mathbb{C}$.
Let $f \in S_k(\Gamma_1(N), \psi)$ and $\overline{\rho} = \overline{\rho}_f$ be the mod $p$ representation associated to $f$. We assume the following property.
\begin{assu} \label{assu:gorenstein}
The localized Hecke algebra at the maximal ideal corresponding to $\overline{\rho}$ is Gorenstein.
\end{assu}
\begin{thm}
Assume that $2 \leq k <p$.
If
\begin{enumerate}
\item $p$ does not divide $N$, or 
\item $p$ exactly divides $N$ and $\overline{\rho}$ is $p$-distinguished,
\end{enumerate}
then Assumption \ref{assu:gorenstein} holds.
\end{thm}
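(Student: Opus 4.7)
The plan is to reduce the Gorenstein property of $\mathbb{T}_\mathfrak{m}$ to the classical mod $p$ multiplicity-one statement for modular forms, which is known in both cases. By the standard duality between the Hecke algebra and its natural module of modular symbols, $\mathbb{T}_\mathfrak{m}$ is Gorenstein if and only if a suitable $\mathbb{T}_\mathfrak{m}$-module of modular symbols (equivalently, the $\mathfrak{m}$-localized first \'etale cohomology of the modular curve with coefficients in $\mathrm{Sym}^{k-2}$ of the universal rank-$2$ local system) is free of rank two over $\mathbb{T}_\mathfrak{m}$. Reducing modulo $\mathfrak{m}$, this becomes the statement that $\overline{\rho}$ occurs with multiplicity one in the corresponding mod $p$ cohomology at level $\Gamma_1(N)$ and weight $k$.

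In weight $k=2$, I would invoke Mazur's multiplicity-one theorem from his paper on the Eisenstein ideal to handle case (1), and Theorem 2.1 of Wiles' paper on modular elliptic curves and Fermat's Last Theorem to handle case (2). The $p$-distinguished hypothesis in case (2) is exactly what makes the ordinary deformation theory at $p$ rigid enough for Wiles' argument: it ensures that the two characters on $\overline{\rho}|_{G_{\mathbb{Q}_p}}^{\mathrm{ss}}$ are distinct, so the unramified quotient of the ordinary filtration is canonically determined.

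For the remaining range $2 < k < p$, the strict inequality $k < p$ puts us in the Fontaine-Laffaille regime, and I would transfer multiplicity one from weight $2$ to weight $k$ either via the theory of companion forms (Gross-Edixhoven), or by embedding the weight-$k$ Hecke algebra into a Hida family (in case (2), which is ordinary) or a Coleman family (in case (1)) passing through a classical weight-$2$ point, and then transporting Gorensteinness through the corresponding specialization map, which is a surjection of finite flat $\mathbb{Z}_p$-algebras between Gorenstein rings in this range.

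The main technical obstacle will be the weight-reduction step: one must verify that the $\overline{\rho}$-isotypic component at weight $k$ is captured faithfully without picking up spurious oldform contributions or losing freeness under the specialization. This is precisely where the hypothesis $k < p$ (granting Fontaine-Laffaille and hence a clean comparison of lattices) and, in case (2), the $p$-distinguished condition (separating the residual eigenvalues on $\overline{\rho}|_{G_{\mathbb{Q}_p}}^{\mathrm{ss}}$ so that the ordinary projector is well-defined integrally) play their essential roles.
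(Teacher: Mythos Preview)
Your plan is essentially correct and follows the standard route to Gorensteinness via mod $p$ multiplicity one, but you should be aware that the paper does not prove this theorem at all: its entire proof is the single line ``See \cite[Theorem 1.13]{vatsal-cong}.'' So there is no argument in the paper to compare against; you have supplied an outline of what lies behind the cited reference.

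A few small corrections to your sketch. For case (1) in weight $2$, Mazur's original multiplicity-one theorem is for prime level $\Gamma_0(N)$; the general $\Gamma_1(N)$ case with $p\nmid N$ and irreducible $\overline{\rho}$ is due to later work (Faltings--Jordan, Edixhoven, and others), and Vatsal's Theorem~1.13 is precisely a packaging of these results. For higher weight $2<k<p$ with $p\nmid N$, one does not need to pass through Hida or Coleman families: multiplicity one is proved directly in cohomology with $\mathrm{Sym}^{k-2}$ coefficients using crystalline methods (Faltings--Jordan), and this is the argument Vatsal invokes. Your Hida-family idea works cleanly only in the ordinary case, so it would not cover all of case (1). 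Finally, in case (2) the key input is indeed Wiles' Theorem~2.1 (ordinary, $p$-distinguished), and the extension to higher weight goes through Hida theory as you say.

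In short: nothing is wrong with your strategy, but since the paper simply outsources the proof to Vatsal, the cleanest thing to do is cite the same reference and, if you want to expand, point to Faltings--Jordan for case (1) and Wiles/Hida for case (2) rather than sketching a weight-transfer argument via families.
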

\begin{proof}
See \cite[Theorem 1.13]{vatsal-cong}.
\end{proof}
Under Assumption \ref{assu:gorenstein}, we can add a canonical $p$-integral structure on the Shimura isomorphism (\ref{eqn:shimura_isom}) following \cite[$\S$3]{vatsal-integralperiods-2013}
\[
\xymatrix@R=1em{
S(f) \otimes \mathbb{Q}_p \otimes_{\iota} \mathbb{C} \ar[r]^-{\mathrm{per}_f} & V_{\mathcal{O}_\lambda} (f) \otimes_{\mathcal{O}_\lambda} F_\lambda \otimes_{\iota} \mathbb{C} \\
\mathcal{O}_\lambda \omega_f \ar[r]^-{\mathrm{per}_f} \ar@{^{(}->}[u] & \Omega^+_f \cdot V_{\mathcal{O}_\lambda} (f)^+ + \Omega^-_f \cdot V_{\mathcal{O}_\lambda} (f)^- \ar@{^{(}->}[u]
}
\]
where $\omega_f$ is an $\mathcal{O}_\lambda$-basis of $S(f) \otimes \mathbb{Q}_p$ and it corresponds to the \emph{normalized} newform $f$. Here, $\Omega^\pm_f$ are the \textbf{integral canonical periods of $f$}.
Under the period map, we have
\begin{equation} \label{eqn:canonical_periods}
\mathrm{per}_f (\omega_f) = \Omega^+_f \cdot \gamma^+ + \Omega^-_f \cdot \gamma^-
\end{equation}
for some non-zero $\gamma^{\pm} \in V_{\mathcal{O}_\lambda} (f)^{\pm}$.
By making the same choice of $\gamma$ as Choice \ref{choice:gamma} with Equation (\ref{eqn:canonical_periods}), we obtain an ``optimal" Euler system $z^{(Np)}_{\gamma, m}(k-r)$.

\bibliographystyle{amsalpha}
\bibliography{kim-nakamura}

\providecommand{\bysame}{\leavevmode\hbox to3em{\hrulefill}\thinspace}
\providecommand{\MR}{\relax\ifhmode\unskip\space\fi MR }
\providecommand{\MRhref}[2]{%
  \href{http://www.ams.org/mathscinet-getitem?mr=#1}{#2}
}
\providecommand{\href}[2]{#2}
\begin{thebibliography}{{LMF}17}

\bibitem[AS86]{ash-stevens}
Avner Ash and Glenn Stevens, \emph{Modular forms in characteristic {$\ell$} and
  special values of their {$L$}-functions}, Duke Math. J. \textbf{53} (1986),
  no.~3, 849--868.

\bibitem[BCDT01]{bcdt}
Christophe Breuil, Brian Conrad, Fred Diamond, and Richard Taylor, \emph{On the
  modularity of elliptic curves over {$\mathbb{Q}$}: wild 3-adic exercises}, J.
  Amer. Math. Soc. \textbf{14} (2001), no.~4, 843--939.

\bibitem[BLR90]{neron-models}
Siegried Bosch, Werner L{\"{u}}tkebohmert, and Michel Raynaud,
  \emph{N{\'{e}}ron {M}odels}, Ergeb. Math. Grenzgeb. (3), vol.~21,
  Springer-Verlag, 1990.

\bibitem[BS12]{bradshaw-stein}
Robert Bradshaw and William Stein, \emph{Heegner points and the arithmetic of
  elliptic curves over ring class extensions}, J. Number Theory \textbf{132}
  (2012), 1707--1719.

\bibitem[CS10]{coates-sujatha-galois}
John Coates and Ramadori Sujatha, \emph{Galois {C}ohomology of {E}lliptic
  {C}urves}, second ed., Tata Institute of Fundamental Research Lectures on
  Mathematics, vol.~91, Narosa Publishing House, 2010.

\bibitem[Del98]{delbourgo-compositio}
Daniel Delbourgo, \emph{Iwasawa theory for elliptic curves at unstable primes},
  Compos. Math. \textbf{113} (1998), 123--153.

\bibitem[Del02]{delbourgo-jnt}
\bysame, \emph{On the {$p$}-adic {B}irch, {S}winnerton-{D}yer conjecture for
  non-semistable reduction}, J. Number Theory \textbf{95} (2002), no.~1,
  38--71.

\bibitem[Del08]{delbourgo-book}
\bysame, \emph{Elliptic curves and big {G}alois representations}, London Math.
  Soc. Lecture Note Ser., vol. 356, Cambridge University Press, 2008.

\bibitem[Hac10]{hachimori_fine}
Yoshitaka Hachimori, \emph{Euler characteristics of fine {S}elmer groups}, J.
  Ramanujan Math. Soc. \textbf{25} (2010), no.~3, 285--–293.

\bibitem[Jac18]{jacinto-L_p-additive}
Joaquin~Rodrigues Jacinto, \emph{{$(\varphi, \Gamma)$}-modules de de {R}ham et
  fonctions {$L$} {$p$}-adiques}, Algebra Number Theory \textbf{12} (2018),
  no.~4, 885--934.

\bibitem[JSW17]{jetchev-skinner-wan}
Dimitar Jetchev, Christopher Skinner, and Xin Wan, \emph{The {B}irch and
  {S}winnerton-{D}yer formula for elliptic curves of analytic rank one}, Camb.
  J. Math. \textbf{5} (2017), no.~3, 369--434.

\bibitem[Kat04]{kato-euler-systems}
Kazuya Kato, \emph{{$p$}-adic {H}odge theory and values of zeta functions of
  modular forms}, Ast\'{e}risque \textbf{295} (2004), 117--290.

\bibitem[KKS]{kks}
Chan-Ho Kim, Myoungil Kim, and Hae-Sang Sun, \emph{On the indivisibility of
  derived {K}ato's {E}uler systems and the main conjecture for modular forms},
  submitted, \href{https://arxiv.org/abs/1709.05780}{arXiv:1709.05780}.

\bibitem[Kob03]{kobayashi-thesis}
Shinichi Kobayashi, \emph{Iwasawa theory for elliptic curves at supersingular
  primes}, Invent. Math. \textbf{152} (2003), no.~1, 1--36.

\bibitem[KP]{kosters-pannekoek}
Michiel Kosters and Ren{\'{e}} Pannekoek, \emph{On the structure of elliptic
  curves over finite extensions of {$\mathbb{Q}_p$} with additive reduction},
  preprint, \href{https://arxiv.org/abs/1703.07888}{arXiv:1703.07888}.

\bibitem[Kur02]{kurihara-invent}
Masato Kurihara, \emph{On the {T}ate {S}hafarevich groups over cyclotomic
  fields of an elliptic curve with supersingular reduction {I}}, Invent. Math.
  \textbf{149} (2002), 195--224.

\bibitem[Kur14]{kurihara-iwasawa-2012}
\bysame, \emph{The structure of {S}elmer groups of elliptic curves and modular
  symbols}, Iwasawa Theory 2012: State of the Art and Recent Advances (Thanasis
  Bouganis and Otmar Venjakob, eds.), Contributions in Mathematical and
  Computational Sciences, vol.~7, Springer, 2014, pp.~317--356.

\bibitem[Liu06]{qing-liu}
Qing Liu, \emph{Algebraic {G}eometry and {A}rithmetic {C}urves}, paperback ed.,
  Oxf. Grad. Texts Math., vol.~6, Oxford {U}niversity {P}ress, 2006.

\bibitem[{LMF}17]{lmfdb}
The {LMFDB Collaboration}, \emph{The {$L$}-functions and modular forms
  database}, \url{http://www.lmfdb.org}, 2017, [Online; accessed 6 July 2017].

\bibitem[MR04]{mazur-rubin-book}
Barry Mazur and Karl Rubin, \emph{{K}olyvagin {S}ystems}, Mem. Amer. Math.
  Soc., vol. 168, American {M}athematical {S}ociety, March 2004.

\bibitem[Pan]{pannekoek}
Ren{\'{e}} Pannekoek, \emph{On $p$-torsion of $p$-adic elliptic curves with
  additive reduction}, preprint,
  \href{https://arxiv.org/abs/1211.5833}{arXiv:1211.5833}.

\bibitem[Pol14]{pollack-oms}
Robert Pollack, \emph{Overconvergent modular symbols}, Computations with
  {M}odular {F}orms ({H}eidelberg 2011), Contributions in {M}athematical and
  {C}omputational {S}ciences, vol.~6, Springer, 2014.

\bibitem[PR00]{perrin-riou-book}
Bernadette Perrin-Riou, \emph{{$p$}-adic {$L$}-{F}unctions and {$p$}-adic
  {R}epresentations}, SMF/AMS Texts and Monographs, vol.~3, American
  Mathematical Society, 2000, Translated by Leila Schneps.

\bibitem[Roh88]{rohrlich-nonvanishing-2}
David Rohrlich, \emph{{$L$}-functions and division towers}, Math. Ann.
  \textbf{281} (1988), 611--632.

\bibitem[Roh94]{rohrlich-weil-deligne}
\bysame, \emph{Elliptic curves and the {W}eil-{D}eligne group}, Elliptic curves
  and related topics (Hershy Kisilevsky and M.~Ram~Murty, eds.), CRM
  Proceedings {\&} Lecture Notes, vol.~4, American Mathematical Society, 1994,
  pp.~125--157.

\bibitem[Rub98]{rubin-es-mec}
Karl Rubin, \emph{Euler systems and modular elliptic curves}, Galois
  representations in arithmetic algebraic geometry ({D}urham, 1996) (Anthony
  Scholl and Richard Taylor, eds.), London Math. Soc. Lecture Note Ser., vol.
  254, Cambridge University Press, 1998, pp.~351--367.

\bibitem[Rub00]{rubin-book}
\bysame, \emph{Euler {S}ystems}, Ann. of Math. Stud., vol. 147, Princeton
  {U}niversity {P}ress, 2000.

\bibitem[Sch98]{scholl-kato}
Anthony Scholl, \emph{An introduction to {K}ato's {E}uler systems}, Galois
  representations in arithmetic algebraic geometry ({D}urham, 1996) (Anthony
  Scholl and Richard Taylor, eds.), London Math. Soc. Lecture Note Ser., vol.
  254, Cambridge University Press, 1998.

\bibitem[SD86]{swinnerton-dyer-congruences}
Sir~Peter Swinnerton-Dyer, \emph{On {$\ell$}-adic representations and
  congruences for coefficients of modular forms}, Modular {F}unctions of {O}ne
  {V}ariable {III}: Proceedings International Summer School, University of
  Antwerp, RUCA, July 17--August 3, 1972 (Willem Kuijk and Jean-Pierre Serre,
  eds.), Lecture Notes in Math., vol. 350, Springer, 1986, 2nd Corrected
  Printing, pp.~1--55.

\bibitem[SD13]{swinnerton-dyer-density}
\bysame, \emph{Density of rational points on certain surfaces}, Algebra Number
  Theory \textbf{7} (2013), no.~4, 835--851.

\bibitem[Sil99]{silverman2}
Joseph Silverman, \emph{Advanced {T}opics in the {A}rithmetic of {E}lliptic
  {C}urves}, corrected second printing ed., Grad. Texts in Math., vol. 151,
  Springer-{V}erlag, 1999.

\bibitem[Sil09]{silverman}
\bysame, \emph{The {A}rithmetic of {E}lliptic {C}urves}, 2nd ed., Grad. Texts
  in Math., vol. 106, Springer-{V}erlag, 2009.

\bibitem[Ski16]{skinner-pacific}
Christopher Skinner, \emph{Multiplicative reduction and the cyclotomic main
  conjecture for {$\mathrm{GL}_2$}}, Pacific J. Math. \textbf{283} (2016),
  no.~1, 171--200.

\bibitem[Ste89]{glenn-stickelberger}
Glenn Stevens, \emph{Stickelberger elements and modular parametrizations of
  elliptic curves}, Invent. Math. \textbf{98} (1989), 75--106.

\bibitem[Vat99]{vatsal-cong}
Vinayak Vatsal, \emph{Canonical periods and congruence formulae}, Duke Math. J.
  \textbf{98} (1999), no.~2, 397--419.

\bibitem[Vat13]{vatsal-integralperiods-2013}
\bysame, \emph{Integral periods for modular forms}, Ann. Math. Qu\'{e}bec
  \textbf{37} (2013), 109--128.

\bibitem[Wan18]{wan-main-conj-ss-ec}
Xin Wan, \emph{Iwasawa main conjecture for supersingular elliptic curves},
  preprint, \href{https://arxiv.org/abs/1411.6352}{arXiv:1411.6352}, February
  2018.

\end{thebibliography}

\end{document}